\theoremstyle{plain}
\newtheorem{thm}{Theorem}
\newtheorem{lem}[thm]{Lemma}
\newtheorem{prop}[thm]{Proposition}
\newtheorem{cor}[thm]{Corollary}
\newtheorem{rem}[thm]{Remark}
\renewcommand{\b}[1]{\textcolor{blue}{#1}}
\newcommand{\sph}[1]{\mathbb{S}^{#1}}
\providecommand{\ind}{\mathds{1}} 
\providecommand{\les}{\lesssim}
\renewcommand{\S}{\mathbb{S}^1}
\providecommand{\N}{\mathbb{N}}
\providecommand{\R}{\mathbb{R}}
\providecommand{\C}{\mathbb{C}}
\providecommand{\eps}{\varepsilon}
\providecommand{\ov}{\overline}
\DeclareMathOperator{\supp}{supp}
\renewcommand{\qed}{\hfill $\Box$}
\begin{document}


\allowdisplaybreaks

\title{Optimal weighted Fourier restriction estimates for the sphere in 2D}

\author{Rainer Mandel}
\email{Rainer.Mandel@gmx.de}

\keywords{Fourier Restriction, Weighted Inequalities}
\subjclass[2020]{42B10} 

\begin{abstract}
  We prove weighted versions of the 2D Restriction Conjecture for the unit sphere in
  $\R^2$. Our results involve the weight functions $(1+|x|)^\alpha(1+|y|)^\beta$ and  $(1+|x|+|y|)^\gamma$
  with $\alpha,\beta,\gamma\geq 0$.
\end{abstract}

\maketitle
\allowdisplaybreaks
   
\section{Introduction}

  In this paper we investigate  weighted $L^p$-$L^q$-estimates for the Fourier
  Restriction operator  
  $$
    (\mathcal R g)(\xi) 
    := \hat g(\xi) 
    := \frac{1}{2\pi} \int_{\R^2} g(x,y) e^{-i(x,y)\cdot\xi} \,d(x,y)
    \quad\text{for }\xi\in\S
  $$
  of the two-dimensional unit sphere $\S := \{\xi\in\R^2: |\xi|=1\}$.
  Its adjoint $\mathcal R^*$ is   the Fourier Extension operator  given by  the formula
  $$
    (\mathcal R^*F)(x,y) = \frac{1}{2\pi} \int_{\S} e^{i(x,y)\cdot\omega}
    F(\omega)\,d\sigma(\omega) \qquad\text{for } (x,y)\in\R^2.
  $$  
  Here, $\sigma$ denotes the canonical surface measure on $\S$.
  The mapping properties of $\mathcal R$ induce the mapping properties
  of $\mathcal R^*$ and vice versa, so no generality is lost by investigating $\mathcal R^*$.  
  The following fact, which is the celebrated Restriction Conjecture in two spatial dimensions, was proved by
  Fefferman \cite{Fefferman} and Zygmund \cite{Zygmund} about 50 years ago.
  
\begin{thm}[Fefferman, Zygmund] \label{thm:FeffermanZygmund}
   $\mathcal R:L^p(\R^2)\to L^\mu(\sph{1})$ is bounded if and only if $1\leq p<\frac{4}{3}$ and
  $3\mu\leq p'$.  Equivalently, 
  $\mathcal R^*:L^r(\sph{1})\to L^q(\R^2)$ is bounded if and only if $q\geq 3r'$ and $q>4$. 
\end{thm}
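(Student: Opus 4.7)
My plan is to prove both directions of the equivalence, working throughout with the extension operator $\mathcal R^*$; the restriction formulation then follows by $L^p$-duality.

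For necessity I would use two standard tests. Feeding in the Knapp example $F = \ind_{I_\delta}$ for an arc $I_\delta \subset \S$ of length $\delta$, one has $\|F\|_{L^r(\S)} = \delta^{1/r}$, while a second-order Taylor expansion of the phase $x\cdot\omega$ around the arc shows that $|\mathcal R^*F(x)| \ges \delta$ on a rectangle of dimensions $\delta^{-2}\times\delta^{-1}$ aligned normally and tangentially to $\S$; hence $\|\mathcal R^*F\|_{L^q(\R^2)} \ges \delta^{1-3/q}$, which upon letting $\delta \to 0$ forces $1/r + 3/q \le 1$, i.e., $q \ge 3r'$. For the second constraint, taking $F \equiv 1$ gives $\mathcal R^*F = \widehat{d\sigma}$, and the stationary-phase asymptotic $|\widehat{d\sigma}(x)| \sim |x|^{-1/2}$ places $\widehat{d\sigma}$ in $L^q(\R^2)$ iff $q > 4$.

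For sufficiency I would begin from two natural endpoints: the trivial $\|\mathcal R^*F\|_{L^\infty} \le \|F\|_{L^1(\S)}$, corresponding to $(1/r,1/q) = (1,0)$, and the Stein--Tomas endpoint $\mathcal R^*:L^2(\S)\to L^6(\R^2)$ at $(1/2,1/6)$. The latter I would prove by $TT^*$: since $\mathcal R^*\mathcal R g = g \ast \widehat{d\sigma}$, the claim reduces to bounding convolution with $\widehat{d\sigma}$ from $L^{6/5}(\R^2)$ to $L^6(\R^2)$, which follows from a dyadic decomposition $\widehat{d\sigma} = \sum_j \psi_j\widehat{d\sigma}$ combining the stationary-phase bound $\|\psi_j\widehat{d\sigma}\|_\infty \les 2^{-j/2}$ with the uniform $L^2\to L^2$ bound from Plancherel. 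Riesz--Thorin interpolation between the two endpoints then yields the full optimal line $q = 3r'$ for $r\in[1,2]$.

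The main obstacle is extending the boundedness to the complementary range $r \in (2,\infty]$ with $q \ge 3r'$ and $q > 4$, since a simple interpolation of Stein--Tomas with the $L^\infty \to L^q$ bound ($q>4$, coming from $\widehat{d\sigma}\in L^q$) can be checked to fall strictly short of the optimal line. To overcome this I would apply Stein's complex interpolation to the analytic family $T_z F = \widehat{F \cdot (1-|\xi|^2)_+^z/\Gamma(z+1)}$, which regularises $d\sigma$ and reduces to a constant multiple of $\mathcal R^*$ at $z=-1$: on $\Real z = 0$ the multiplier is bounded, so Plancherel yields sharp $L^2$-estimates, while for sufficiently large $\Real z$ the regularised multiplier's Fourier transform decays fast enough to lie in $L^q$ for all $q>2$, yielding $L^\infty\to L^q$-bounds via Young's inequality. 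Stein's theorem then interpolates between these two vertical lines, and specialising at $z=-1$ delivers $\mathcal R^*:L^r(\S)\to L^q(\R^2)$ throughout the claimed region $\{q\ge 3r',\,q>4\}$.
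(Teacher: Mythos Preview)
The paper does not actually prove this theorem; it is quoted as the classical result of Fefferman and Zygmund and used as input throughout. So there is no ``paper's proof'' to compare against, but your sketch still deserves scrutiny on its own merits.

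Your necessity arguments and the sufficiency for $r\in[1,2]$ (Stein--Tomas plus interpolation with the trivial $L^1\to L^\infty$ bound) are standard and correct. The problem is the extension to $r\in(2,4)$, where the genuinely hard content of the 2D restriction theorem lies. Your analytic-family plan fails for two reasons. First, a bookkeeping error: you propose to interpolate between the lines $\Real z=0$ and $\Real z$ large \emph{positive}, and then ``specialise at $z=-1$''; but $-1$ does not lie in that strip, so Stein interpolation tells you nothing there. Second, and more seriously, even if you set the strip up correctly---say between $\Real z=0$ (where Plancherel gives $L^2\to L^2$) and $\Real z=-3/2$ (where the Bessel-function asymptotics of the Fourier transform of $(1-|\xi|^2)_+^z/\Gamma(z+1)$ give an $L^1\to L^\infty$ bound)---interpolating at $z=-1$ recovers exactly the Stein--Tomas estimate $L^{6/5}\to L^6$ for $\mathcal R\mathcal R^*$, i.e.\ $\mathcal R^*:L^2\to L^6$, and nothing more. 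To push the input space beyond $L^2$ via this family you would need $L^p\to L^p$ bounds for the multipliers $(1-|\xi|^2)_+^{it}$ with $p\neq 2$, which fail (Fefferman's ball-multiplier theorem), or $L^p$ bounds for Bochner--Riesz means at positive index, which in 2D is the Carleson--Sj\"olin theorem and hence essentially equivalent to what you are trying to prove.

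The missing idea is a genuinely two-dimensional argument. The cleanest is Zygmund's: write $|\mathcal R^*F|^2$ as an extension from $\S\times\S$, change variables to $(\omega+\omega',\omega-\omega')$, and use Plancherel together with the Jacobian of the sum map to get an $L^4(\R^2)$ bound for $\mathcal R^*F$ in terms of $\|F\|_{L^{p}(\S)}$ with $p<4$; interpolating this (or its restricted weak-type version) with Stein--Tomas then fills in the line $q=3r'$ for $2<r<4$. Alternatives are C\'ordoba's square-function/Kakeya argument or the Carleson--Sj\"olin oscillatory-integral method. Any of these would close the gap; the embedded analytic family you propose cannot.
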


 Here, the whole difficulty is to prove the endpoint estimates $q=3r'$ 
 in view of the trivial embeddings of Lebesgue spaces on the unit sphere. 
 Our aim is to generalize this result to a weighted setting that contains another result from the
 literature. Bloom and Sampson~\cite[Theorem 2.11]{BloomSampson} proved optimal $L^2$-bounds for the operator 
  $$
    (\mathcal R_{\alpha,\beta}^*F)(x,y) 
    := (\mathcal R^* F)(x,y) (1+|x|)^{-\alpha}(1+|y|)^{-\beta},\qquad x,y\in\R,  
  $$
  in the special case $\alpha=\beta$.  Their result reads as follows. 
 
 \begin{thm}[Bloom, Sampson] \label{thm:BloomSampson} 
   $\mathcal R_{\alpha,\alpha}^*:L^2(\sph{1})\to L^2(\R^2)$
   is bounded if and only if $\alpha\geq \frac{1}{3}$. 
 \end{thm}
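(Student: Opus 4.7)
I would prove the result in two parts.

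For \emph{necessity}, the bound $\alpha\geq 1/3$ follows from a Knapp example centered at an axis-aligned point. Taking $F_\delta$ to be the indicator of the arc of length $\delta$ about $(1,0)\in\sph{1}$, one has $\|F_\delta\|_{L^2(\sph{1})}\sim\delta^{1/2}$ and $|\mathcal R^*F_\delta|\gtrsim\delta$ on the Knapp rectangle $[-c\delta^{-2},c\delta^{-2}]\times[-c\delta^{-1},c\delta^{-1}]$, which is aligned with the coordinate axes. Using $\int_0^R(1+t)^{-2\alpha}dt\sim R^{1-2\alpha}$ for $\alpha<1/2$, integration of $|\mathcal R^*F_\delta|^2$ against the weight yields $\|\mathcal R^*_{\alpha,\alpha}F_\delta\|_{L^2}^2\gtrsim\delta^{6\alpha-1}$, and comparison with $\|F_\delta\|_{L^2(\sph{1})}^2\sim\delta$ forces $\alpha\geq 1/3$.

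For \emph{sufficiency}, pointwise monotonicity of the weight in $\alpha$ reduces matters to the critical exponent $\alpha=1/3$. I would apply the $TT^*$-method: with $T=\mathcal R^*_{1/3,1/3}$ and $w(x,y)=(1+|x|)^{-1/3}(1+|y|)^{-1/3}$, the operator $T^*T:L^2(\sph{1})\to L^2(\sph{1})$ satisfies $T^*TF=c\,\mathcal R[w^2\mathcal R^*F]$ and is therefore integral with kernel
\[ K(\omega,\omega')=c\,\widehat{w^2}(\omega-\omega')=c\,\psi(\omega_1-\omega_1')\,\psi(\omega_2-\omega_2'), \]
where $\psi$ is the one-dimensional Fourier transform of $(1+|t|)^{-2/3}$. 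Homogeneous-transform asymptotics near the origin together with integration by parts at infinity give the key pointwise bound $|\psi(\eta)|\lesssim\min(|\eta|^{-1/3},|\eta|^{-1})$.

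Boundedness of $T^*T$ on $L^2(\sph{1})$ would then follow from Schur's test applied with the symmetric weight $h(\omega):=|\omega_1|^{-\beta}|\omega_2|^{-\beta}$ for some $\beta\in(0,1)$, chosen so that $h$ has matching singularities at all four axis-aligned points of $\sph{1}$. The $\pi/2$-rotational and reflection symmetries of $K$ and $h$ (inherited from those of $w^2$) reduce matters to verifying $\int_{\sph{1}}|K(\omega,\omega')|h(\omega')\,d\sigma(\omega')\lesssim h(\omega)$ for $\omega=(\cos\theta,\sin\theta)$ with $\theta\in[0,\pi/4]$. Using $|\omega_1-\omega_1'|\sim|\theta-\theta'||\theta+\theta'|$ and $|\omega_2-\omega_2'|\sim|\theta-\theta'|$ in the local regime $\theta'\approx\theta$ yields $|K|\lesssim|\theta-\theta'|^{-2/3}|\theta+\theta'|^{-1/3}$, and the required bound $\lesssim \theta^{-\beta}$ would be obtained from a direct case analysis splitting $\omega'$ into the local, antipodal, and axis-separated regions.

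The main obstacle is the enhanced singularity of $K$ near each of the four axis-aligned points of $\sph{1}$, where the naive Schur test with $h\equiv 1$ fails logarithmically (there $\omega-\omega'$ is essentially tangent to a coordinate axis and both factors of $\psi$ are simultaneously large). The compensating singularities of $h$ at exactly those points resolve this; the sharp interplay between the constraints $\beta>0$ (needed for decay of the local integrand) and $\beta<1$ (needed for local integrability of $h$ on $\sph{1}$) is precisely what reflects the critical threshold $\alpha=1/3$.
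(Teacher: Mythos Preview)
Your necessity argument via the axis-aligned Knapp example is correct and essentially matches the paper's Lemma~17, specialized to $q=r=2$, $\alpha=\beta$; the paper uses a periodicity trick to obtain many translated rectangles, but in this symmetric $L^2$ case a single Knapp box already yields the sharp exponent, as your computation shows.

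Your sufficiency argument, however, follows a genuinely different route from the paper. The paper does not prove Theorem~\ref{thm:BloomSampson} directly but obtains it as the special case $\alpha=\beta$, $q=r=2$ of Theorem~\ref{thm:main}. That proof proceeds by first establishing weak-type mixed-norm bounds for $\alpha\geq\frac{1}{q}$ via a one-dimensional Hausdorff--Young/H\"older argument (Lemma~13), then Stein-interpolating these against the unweighted Fefferman--Zygmund theorem to reach the range $\alpha<\frac{1}{q}$ (Lemma~14), and finally upgrading restricted weak-type to strong-type by real interpolation (Corollary~15). In particular, the paper's proof uses the full 2D restriction theorem as a black box and never computes the $T^*T$ kernel.

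Your $TT^*$ approach with a weighted Schur test is more self-contained for the specific $L^2\to L^2$ endpoint: it avoids both Fefferman--Zygmund and the interpolation machinery, and the factorization $\widehat{w^2}(\eta)=\psi(\eta_1)\psi(\eta_2)$ together with $|\psi(\eta)|\lesssim|\eta|^{-1/3}$ near the origin gives the kernel bound $|K(\omega,\omega')|\lesssim|\theta-\theta'|^{-2/3}|\theta+\theta'|^{-1/3}$ near an axis point, exactly as you say. The scaling $\theta'=\theta s$ then shows the Schur integral against $h(\omega)=|\omega_1\omega_2|^{-\beta}$ is $\lesssim\theta^{-\beta}$ provided the model integral $\int|1-s|^{-2/3}|1+s|^{-1/3}|s|^{-\beta}\,ds$ converges, which happens precisely for $0<\beta<1$; the antipodal and axis-separated regions are easier and contribute $O(1)\lesssim\theta^{-\beta}$. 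So your sketch is sound. The trade-off is that your method is tailored to $L^2$ and to product weights, whereas the paper's interpolation scheme simultaneously delivers the full $L^r\to L^q$ range of Theorem~\ref{thm:main}.
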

 
 We merge these two results in an optimal weighted version of the Restriction
 Conjecture.    
  
\begin{thm} \label{thm:main}
  Assume $0\leq \alpha,\beta<\infty$ and $1\leq r\leq \infty, 0<q<\infty$. Then
  $\mathcal R_{\alpha,\beta}^*:L^r(\sph{1})\to L^q(\R^2)$ is bounded provided that
  $\alpha+\beta>\frac{2}{q}-\frac{1}{2}$ as well as
  \begin{itemize}
      \item[(i)] $\max\{\alpha,\beta\}\geq \frac{1}{q}$ and  $2\min\{\alpha,\beta\}>\frac{2}{q}-\frac{1}{r'}$
      \qquad or
      \item[(ii)] $\max\{\alpha,\beta\}<\frac{1}{q}$ and  
       $\alpha+\beta+\min\{\alpha,\beta\}>  \frac{3}{q}-\frac{1}{r'}$
    \end{itemize}      
  holds. If additionally $1<r\leq q$ holds, then the operator is also bounded for
  \begin{itemize}
      \item[(iii)] $\max\{\alpha,\beta\}>\frac{1}{q}$ and  $2\min\{\alpha,\beta\}=\frac{2}{q}-\frac{1}{r'}$
      \qquad or
      \item[(iv)] $\max\{\alpha,\beta\}<\frac{1}{q}$ and 
       $\alpha+\beta+\min\{\alpha,\beta\}=\frac{3}{q}-\frac{1}{r'}$.
    \end{itemize}     
  In the case $q=\infty$ the operator is bounded for all $r\in [1,\infty],\alpha,\beta\in [0,\infty)$. \\  
  These conditions are optimal.   
\end{thm}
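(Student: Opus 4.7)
The plan is to separate sufficiency from optimality. For sufficiency, I would begin with the classical stationary phase expansion
\[
(\mathcal R^* F)(x,y) = \sum_{\pm} c_\pm\, \rho^{-1/2} e^{\pm i\rho}\, F(\pm\omega(\theta)) + E(x,y),\qquad (x,y)=\rho\,\omega(\theta),\ \omega(\theta)=(\cos\theta,\sin\theta),
\]
valid for $\rho\gtrsim 1$; the remainder $E$ decays one order faster in $\rho$ and its contribution to the weighted $L^q$--norm can be absorbed by a crude bound, possibly interpolated with Theorem~\ref{thm:FeffermanZygmund}. The principal term yields the pointwise majorization $|\mathcal R^* F(x,y)|\lesssim \rho^{-1/2}(|F(\omega(\theta))|+|F(-\omega(\theta))|)$, which after insertion of the weight and integration in $\rho$ reduces the weighted $L^q$ estimate to the angular inequality
\[
\int_\S |F|^q\, W_{\alpha,\beta,q}(\theta)\,d\sigma \lesssim \|F\|_{L^r(\S)}^q,
\quad
W_{\alpha,\beta,q}(\theta):=\int_0^\infty \frac{\rho\,d\rho}{\rho^{q/2}(1+\rho|\cos\theta|)^{q\alpha}(1+\rho|\sin\theta|)^{q\beta}}.
\]

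Estimating $W_{\alpha,\beta,q}$ in three angular regimes---close to the coordinate axes, close to the diagonals, and in between---reveals a dichotomy governed by whether $\max\{\alpha,\beta\}$ is above or below $1/q$; this is exactly the case split (i) vs.\ (ii) of the theorem. Applying H\"older's inequality with conjugate pair $(r/q,(r/q)')$, together with its dual variants when $q>r$, converts the angular bound into the desired $L^r(\S)\to L^q(\R^2)$ estimate and produces the strict conditions (i) and (ii). The endpoint cases (iii) and (iv) correspond to $W_{\alpha,\beta,q}$ just failing to lie in the required Lorentz angular space; under the extra hypothesis $r\leq q$ they should be reached by real interpolation between two neighboring strict cases, so the restriction $r\leq q$ is the natural closure condition for this interpolation scheme.

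For optimality I would test the inequality on a suite of Knapp-type caps of width $\delta$ together with a constant input: a cap centered on a coordinate axis forces the axial condition $\max\{\alpha,\beta\}\geq 1/q$ together with $2\min\{\alpha,\beta\}\geq 2/q-1/r'$, a cap centered on a diagonal yields the joint condition $\alpha+\beta+\min\{\alpha,\beta\}\geq 3/q-1/r'$, and $F\equiv 1$ reproduces the global scaling $\alpha+\beta\geq 2/q-1/2$. \textbf{Main obstacle.} The crux is the uniform angular analysis of $W_{\alpha,\beta,q}$: the four parameters $(r,q,\alpha,\beta)$ and the two geometric regimes (axial vs.\ diagonal) must be combined into a single case-free bound whose scaling matches the Knapp examples exactly, so that sufficiency and optimality line up. A secondary challenge is the endpoint regime (iii)--(iv), where an interpolation argument finer than bare H\"older is required to recover the logarithmic loss caused by the borderline failure of $W_{\alpha,\beta,q}$ to belong to the relevant angular Lorentz space.
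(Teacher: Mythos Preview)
Your sufficiency argument rests on the pointwise majorization
\[
|\mathcal R^* F(x,y)|\lesssim \rho^{-1/2}\bigl(|F(\omega(\theta))|+|F(-\omega(\theta))|\bigr),
\]
but this is simply false for $F\in L^r(\S)$. The stationary phase expansion you invoke is an asymptotic for smooth densities, and its error term $E$ is bounded by $\rho^{-3/2}$ times a constant depending on \emph{derivatives} of $F$, not on $\|F\|_{L^r}$. For a generic $F$, evaluating at the two antipodal points $\pm\omega(\theta)$ is meaningless; take $F$ supported on a short arc away from $\pm\omega(\theta)$ to see that the right-hand side vanishes while $\mathcal R^*F$ does not. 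So the reduction to the angular weight $W_{\alpha,\beta,q}$ collapses at the first step. A related problem: even if one had such a reduction, H\"older with exponent $r/q$ works only for $r\geq q$, and there is no ``dual variant'' that covers $q>r$, since $\int |F|^q W\,d\sigma \lesssim \|F\|_{L^r}^q$ fails in general for $q>r$ unless $W$ itself lies in the right Lorentz class---which is precisely the delicate point.

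The paper avoids this by never using pointwise stationary phase. Instead it writes $\mathcal R^*F(x,y)=\widehat{G_x}(y)$ as a one-dimensional Fourier transform and applies a weighted Hausdorff--Young bound (H\"older plus Hausdorff--Young in Lorentz spaces) to control the $y$-integral. This yields restricted weak-type bounds into \emph{mixed} Lorentz spaces $L^{q,\infty}_x(L^{q,\infty}_y)$ for $\alpha\geq 1/q$; the regime $\alpha<1/q$ is then reached by Stein interpolation against the unweighted Fefferman--Zygmund estimate, and the strong bounds (including the endpoints (iii),(iv)) follow by real interpolation between nearby exponents. The split $\alpha\gtrless 1/q$ thus enters not through an angular weight analysis but through the applicability of the Hausdorff--Young step.

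Your optimality sketch is also incomplete. The constant density and the axial Knapp cap do give the non-strict inequalities, but they do not explain why $\alpha+\beta>\frac{2}{q}-\frac{1}{2}$ is strict, why the endpoint at $\max\{\alpha,\beta\}=\frac{1}{q}$ fails, or why the endpoint estimates (iii),(iv) require $1<r\leq q$. The paper needs three further arguments for this: a dual-side test function with an almost-critical power decay to force $q\geq r$ when $\alpha>1/q$; an explicit $L^2$ computation with singular densities $\phi^{-\mu}$ to force $r\leq 2$ at $q=2$; and a Stein-interpolation transfer to propagate this to all $q$. A diagonal Knapp cap, incidentally, does not produce the condition $\alpha+\beta+\min\{\alpha,\beta\}\geq \frac{3}{q}-\frac{1}{r'}$---on the diagonal tube $|x|\approx|y|$ and the weight behaves like $(1+\rho)^{-(\alpha+\beta)}$, which gives a weaker constraint.
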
 

 Note that the special case $\alpha=\beta=0$ reproduces Theorem~\ref{thm:FeffermanZygmund} whereas the ansatz
 $\alpha=\beta$ and $q=r=2$ leads to Theorem~\ref{thm:BloomSampson}. The estimates (i),(ii) will be called
 nonendpoint estimates whereas (iii),(iv) are referred to as endpoint estimates.

 \begin{figure}[htbp]
    \centering
    \includegraphics[scale=0.7]{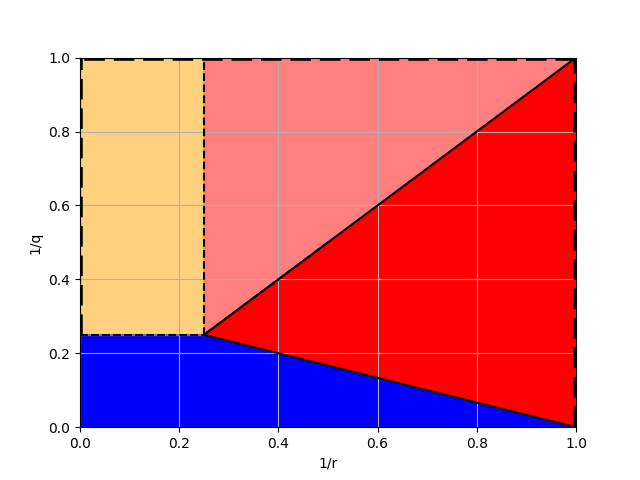}
    \caption{Riesz diagram for Theorem~\ref{thm:main} in the special case $\alpha=\beta$: the indicated
    $(\frac{1}{r},\frac{1}{q})$-regions indicate different optimal conditions on $\alpha$, blue: $\alpha\geq
    0$, orange: $\alpha>\frac{1}{q}-\frac{1}{4}$, red: $\alpha\geq \frac{1}{q}-\frac{1}{3r'}$, 
    pale red: $\alpha> \frac{1}{q}-\frac{1}{3r'}$, 
    dashed lines: strict inequalities, solid lines: non-strict inequalities.}
    \label{fig:Theorem3}
  \end{figure}

  \begin{rem} \label{rem:general} ~
  \begin{itemize}
    \item[(a)] The conditions on $q,r$ reflect the simple observation that more restrictive assumptions on the
    input data (larger $r$) and stronger weights (larger $\alpha,\beta$) yield a wider range of admissible
    exponents $q$.
    Moreover, since the estimate for $q=\infty$ holds for all $\alpha,\beta \in [0,\infty)$ and $r\in
    [1,\infty]$, an interpolation argument shows that the estimates for the lower exponents $q$ imply the ones
    for larger $q$. Formally, if $\mathcal R^*_{\alpha,\beta}:L^r(\sph{1})\to L^q(\R^2)$ is bounded, then it
    is automatically bounded for parameters $(\alpha,\beta,r,q)$ replaced by 
    $(\alpha+\eps_\alpha,\beta+\eps_\beta,r+\eps_r,q+\eps_q)$ whenever
    $\eps_\alpha,\eps_\beta,\eps_r,\eps_q\geq 0$.     
    \item[(b)] The sufficiency part of Theorem~\ref{thm:main} carries over to weight
    functions of the form $w_1(x)w_2(y)$ where 
    $w_1\in X^\alpha, w_2\in X^\beta$ and $X^\gamma := L^{1/\gamma,\infty}(\R)\cap L^\infty(\R)$.
    Even more generally, this is true for weights $w(x,y)$ belonging to the
    intersection of mixed norm spaces $X_x^\alpha(X_y^\beta) \cap X_y^\beta(X^\alpha_x)$ where $x,y$
    indicate one-dimensional integration variables. To see this it suffices to adapt
    the proofs of Proposition~\ref{prop:Pitttype} and 
    Lemma~\ref{lem:weaktypebounds_largealpha}. The subsequent interpolation procedure based on these two
    auxiliary results is the same.
    \item[(c)] The a priori assumption $\alpha,\beta\geq 0$ has been added for the sake of clarity. In fact,
    it is a necessary condition for the estimates to hold. This follows from the equivariance
    property $\mathcal R^*(Fe^{ih\cdot})(x,y)=(\mathcal R^*F)(x+h_1,y+h_2)$ where $h=(h_1,h_2)\in\R^2$, see
    \cite[p.88]{BloomSampson}.
  \end{itemize}
  \end{rem}
  
  We extend our analysis to the  weighted Fourier Extension operator 
  $$
    \mathfrak R_\gamma^*:L^r(\S)\to L^q(\R^2),\quad (\mathfrak R_\gamma^* F)(x,y):=  (\mathcal R^* F)
    (1+|x|+|y|)^{-\gamma}.
  $$ 
  In \cite[Theorem~4.8]{BloomSampson} Bloom and Sampson provided an almost complete study of its mapping
  properties.
  
  \begin{thm}[Bloom, Sampson] \label{thm:BloomSampson2}
    Assume $2\leq q<\infty,1\leq r\leq \infty$. Then the following assumptions are sufficient for 
    $\mathfrak R_\gamma^*:L^r(\S)\to L^q(\R^2)$ to be bounded:    
  \begin{itemize}
    \item[(a)] if $\frac{3}{q}-\frac{1}{r'}\leq 0,4<q<\infty$, then $\gamma\geq 0$,
    \item[(b)] if  $2\leq q\leq 4,q\leq r$, then $\gamma>\frac{2}{q}-\frac{1}{2}$, 
    \item[(c)] if $q=2>r$, then $\gamma\geq \frac{1}{r}$,
    \item[(d)] if $q=r'>2$, then $\gamma>\frac{1}{q}$,
    \item[(e)] if $2<q<r'$, then $\gamma\geq \frac{2}{q}-\frac{1}{r'}$,
    \item[(f)] else $\gamma>\frac{3}{2q}-\frac{1}{2r'}$.
  \end{itemize}
    Given the assumptions on $q,r$, the range for $\gamma$ is optimal in (a),(b),(c),(e) and optimal possibly
    up to the endpoint cases $\gamma=\frac{1}{q}$ in (d) and $\gamma=\frac{3}{2q}-\frac{1}{2r'}$ in (f).
  \end{thm}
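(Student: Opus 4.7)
My plan is to derive Theorem~\ref{thm:BloomSampson2} by combining Theorem~\ref{thm:main} with a direct $L^2$--Plancherel argument in the regimes where the radial weight genuinely outperforms any product weight of the same total order.

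The starting observation is that for $\alpha,\beta \geq 0$ with $\alpha+\beta \leq \gamma$, weighted AM-GM yields
$$
(1+|x|)^{\alpha}(1+|y|)^{\beta} \leq (1+|x|+|y|)^{\alpha+\beta} \leq (1+|x|+|y|)^{\gamma},
$$
and hence $|\mathfrak R_\gamma^* F(x,y)| \leq |\mathcal R_{\alpha,\beta}^* F(x,y)|$ pointwise. The sufficiency claims in cases (a), (d), (e), (f) should then follow from Theorem~\ref{thm:main} by choosing an admissible splitting of $\gamma$: case (a) uses $\alpha=\beta=0$ together with Theorem~\ref{thm:FeffermanZygmund}; cases (d) and (e) use an asymmetric split $\alpha=\gamma$, $\beta=0$ landing in regime (i); and case (f) uses a symmetric split $\alpha=\beta=\gamma/2$ in regime (ii), with each inequality verified by elementary algebra from the hypotheses on $q,r,\gamma$.

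Cases (b) and (c) do not follow from this reduction, because the respective thresholds $\gamma > 2/q-1/2$ and $\gamma \geq 1/r$ lie strictly below the best threshold $\gamma > 2/q-2/(3r')$ obtainable from a product-weight majorant. Here I would argue directly via the Fourier decay $|\widehat{d\sigma}(\xi)| \lesssim (1+|\xi|)^{-1/2}$: writing $\mathcal R^*\mathcal R$ as convolution with $\widehat{d\sigma}$, one obtains by Plancherel the endpoint $L^2$--estimate for $\gamma>1/2$ matching case (b) at $q=2$; the remaining range $2<q\leq 4$ and the bound in (c) then follow by interpolation with the trivial $L^\infty$--bound and with case (a).

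The necessity part in (a), (b), (c), (e) would be established via Knapp-type test functions $F = \chi_{C_\delta}$ on an arc $C_\delta \subset \S$ of length $\delta \ll 1$: one computes that $|\mathcal R^* F(x,y)| \gtrsim \delta$ on a tube of dimensions $\delta^{-1} \times \delta^{-2}$, and the resulting weighted $L^q$--norm of $(1+|x|+|y|)^{-\gamma}\mathcal R^* F$ gives the claimed lower bound on $\gamma$. The two open endpoint cases $\gamma = 1/q$ in (d) and $\gamma = 3/(2q)-1/(2r')$ in (f) lie at interpolation corners where the standard Knapp tests no longer separate admissible from inadmissible $\gamma$, so closing them would require restricted-weak-type refinements beyond those developed here. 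The main technical obstacle is the Plancherel/stationary-phase analysis in cases (b) and (c), which goes genuinely beyond the product-weight machinery of Theorem~\ref{thm:main}.
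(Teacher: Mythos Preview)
Your reduction via the pointwise bound $|\mathfrak R_\gamma^* F|\le |\mathcal R_{\alpha,\beta}^* F|$ is correct as an inequality, but it is much lossier than you think: it fails not only in cases (b) and (c) but also in (d), (e), and in most of (f). Concretely, in case (e) you propose $\alpha=\gamma,\beta=0$ and claim this lands in regime (i) of Theorem~\ref{thm:main}. Regime (i) requires $2\min\{\alpha,\beta\}>\frac{2}{q}-\frac{1}{r'}$, i.e.\ $0>\frac{2}{q}-\frac{1}{r'}$, i.e.\ $q>2r'$; but in (e) one has $q<r'<2r'$, so the condition is violated. The same obstruction kills case (d), where $q=r'$ forces $\frac{2}{q}-\frac{1}{r'}=\frac{1}{q}>0$. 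For case (f), the symmetric split $\alpha=\beta=\gamma/2$ in regime (ii) gives the threshold $\gamma>\frac{2}{q}-\frac{2}{3r'}$, and one checks that $\frac{3}{2q}-\frac{1}{2r'}\ge \frac{2}{q}-\frac{2}{3r'}$ holds only when $q\ge 3r'$; since (f) is precisely the ``else'' complement of (a), the relevant part of (f) has $q<3r'$ and your inequality goes the wrong way. In short, no choice of $\alpha+\beta\le\gamma$ recovers the radial threshold in these cases, because the best product threshold is $\gamma>\frac{2}{q}-\frac{2}{3r'}$, which is strictly larger than the radial thresholds $\frac{1}{q}$, $\frac{2}{q}-\frac{1}{r'}$, $\frac{3}{2q}-\frac{1}{2r'}$ whenever $q<3r'$.

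The paper therefore does not attempt this reduction. It proves Theorem~\ref{thm:main2} directly, by an argument parallel to but independent of Theorem~\ref{thm:main}: the key step (Lemma~\ref{lem:weaktypebounds_radial}) exploits the radial weight via the rescaling $H_x(z)=(1+|x|)^{-1}G_x((1+|x|)^{-1}z)$, which converts $(1+|x|+|y|)^{-\gamma}$ into a product of a one-dimensional weight and a power of $(1+|x|)$. This is exactly the structure a product weight cannot see. Theorem~\ref{thm:BloomSampson2} then follows from Theorem~\ref{thm:main2} by straightforward case distinctions, with no Plancherel or $TT^*$ argument needed.
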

  
  Our intention is to include exponents $0<q<2$ and close the gap about the endpoint cases
  in (d),(f). It turns out that this can be done along the lines of the proof of Theorem~\ref{thm:main}.
   
  \begin{thm} \label{thm:main2}
    Assume $0\leq \gamma<\infty$ and $1\leq r\leq \infty, 0<q<\infty$. Then $\mathfrak
    R_{\gamma}^*:L^r(\sph{1})\to L^q(\R^2)$ is bounded provided that $\gamma>\frac{2}{q}-\frac{1}{2}$ as well
    as
   \begin{itemize}
      \item[(i)] $\gamma>\max\{\frac{3}{2q}-\frac{1}{2r'},\frac{2}{q}-\frac{1}{r'}\}$.
   \end{itemize}     
   If additionally $1<r\leq q$ holds, then the operator is also bounded for
   \begin{itemize}
      \item[(ii)] $\gamma=\max\{\frac{3}{2q}-\frac{1}{2r'},\frac{2}{q}-\frac{1}{r'}\}$ provided that 
      $q\neq r'$.
   \end{itemize}       
   In the case $q=\infty$ the operator is bounded for all $r\in [1,\infty],\gamma\in [0,\infty)$. \\  
   These conditions are optimal.   
 \end{thm}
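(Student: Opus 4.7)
The plan is to mimic the proof of Theorem~\ref{thm:main} with the isotropic weight $(1+|x|+|y|)^{-\gamma}$ in place of the product weight. A natural first reduction uses the pointwise bound
\[
(1+|x|+|y|)^{-\gamma}\leq (1+|x|)^{-\alpha}(1+|y|)^{-\beta},\qquad \alpha,\beta\geq 0,\ \alpha+\beta\leq\gamma,
\]
which yields $\|\mathfrak R_\gamma^* F\|_{L^q}\leq \|\mathcal R_{\alpha,\beta}^* F\|_{L^q}$ and reduces boundedness of $\mathfrak R_\gamma^*$ to that of $\mathcal R_{\alpha,\beta}^*$ for any admissible split. Optimizing over such splits via cases (i)--(iv) of Theorem~\ref{thm:main}, the best threshold attainable is $\gamma>2/q-2/(3r')$, realized by the symmetric choice $\alpha=\beta=\gamma/2$ in case (ii). This dominates the target $\gamma>\max\{3/(2q)-1/(2r'),2/q-1/r'\}$ when $q\geq 3r'$, but falls short in the critical region $q<3r'$: for instance at $q=r=2$ it only yields $\gamma>2/3$ whereas the sharp condition is $\gamma>1/2$.

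To close this gap I would re-run the proof of Theorem~\ref{thm:main} directly with the isotropic weight rather than invoking Theorem~\ref{thm:main} as a black box. The two auxiliary ingredients on which the proof rests---the Pitt-type $L^2$-inequality (Proposition~\ref{prop:Pitttype}) and the weak-type bound for large weight exponents (Lemma~\ref{lem:weaktypebounds_largealpha})---admit isotropic analogues with strictly sharper exponents, for the geometric reason that $(1+|x|+|y|)^{-\gamma}$ decays in every direction, whereas a product weight is stagnant along any coordinate axis along which its exponent vanishes. Once the isotropic Pitt and weak-type inequalities are in place, the same real-interpolation procedure that yields cases (i)--(iv) of Theorem~\ref{thm:main} produces conditions (i) and (ii) of Theorem~\ref{thm:main2}. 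The exclusion $q=r'$ in (ii) reflects the standard obstruction that real interpolation between weak-type endpoints does not yield a strong-type bound on the critical diagonal; the case $q=\infty$ is immediate from $|\mathcal R^*F|\lesssim \|F\|_{L^r(\S)}$.

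Optimality is established by three families of counterexamples: (a) the constant function $F\equiv 1$, for which $|\widehat{d\sigma}(x,y)|\sim (1+|(x,y)|)^{-1/2}$ forces $\gamma>2/q-1/2$; (b) Knapp caps $F=\chi_{C_\delta}$ on an arc of angular length $\delta\to 0$, tested on two regions inside the dual tube of dimensions $\delta^{-2}\times \delta^{-1}$---the far region where $1+|x|+|y|\sim\delta^{-2}$ and the inner subregion where $1+|x|+|y|\sim\delta^{-1}$---forcing $\gamma\geq 3/(2q)-1/(2r')$ and $\gamma\geq 2/q-1/r'$ respectively; (c) a dedicated test function on the line $q=r'$ to rule out the endpoint there. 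The main obstacle is the sharp isotropic Pitt-type bound itself: the pointwise reduction to any product weight is strictly lossy by $1/(2q)-1/(6r')$ in the regime $q<3r'$, and closing this loss requires exploiting the radial geometry of the weight directly rather than reducing to axis-aligned splits.
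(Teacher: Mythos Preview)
Your overall strategy matches the paper's: prove an isotropic weak-type bound directly, Stein-interpolate it against the unweighted Fefferman--Zygmund estimate to obtain restricted weak-type bounds for all $\gamma>0$, then upgrade via real interpolation. The counterexamples you list are also the right families. Two substantive pieces are missing, however.

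\textbf{Sufficiency.} You correctly diagnose that splitting $(1+|x|+|y|)^{-\gamma}$ into a product weight is lossy, but you do not say \emph{how} to obtain the sharp isotropic weak-type bound. The paper's device (Lemma~\ref{lem:weaktypebounds_radial}) is a rescaling: with $(\mathcal R^*F)(x,y)=\widehat{G_x}(y)$ as in Lemma~\ref{lem:weaktypebounds_largealpha}, one sets $H_x(z):=(1+|x|)^{-1}G_x((1+|x|)^{-1}z)$ and substitutes $y=(1+|x|)z$. Since $1+|x|+(1+|x|)|z|=(1+|x|)(1+|z|)$, this converts the isotropic weight exactly into the product $(1+|x|)^{-\gamma}(1+|z|)^{-\gamma}$, after which the one-dimensional Pitt-type inequality (Proposition~\ref{prop:Pitttype}) applies in $z$ and the remaining powers of $(1+|x|)$ are integrable precisely when $\gamma\geq\max\{\tfrac{1}{q},\tfrac{2}{q}-\tfrac{1}{r'},\tfrac{2}{q}-\tfrac{1}{2}\}$. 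This rescaling is the mechanism behind your phrase ``decays in every direction''; without it the argument remains heuristic. Note also that the target space is the sum $Z_q:=L^{q,\infty}_x(L^{q,\infty}_y)+L^{q,\infty}_y(L^{q,\infty}_x)$, since both orderings of the mixed norm are needed to cover caps near all four poles.

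\textbf{Necessity.} Your list (a)--(c) omits the necessity of $r\leq q$ at the endpoint $\gamma=\max\{\tfrac{3}{2q}-\tfrac{1}{2r'},\tfrac{2}{q}-\tfrac{1}{r'}\}$. The paper obtains this (Lemma~\ref{lem:NecIII_radial}) by testing the dual inequality against $f_\eps(x,y)=(1+|x|^2+|y|^2)^{-\delta/2}e^{iy}$ with $\delta=2-\tfrac{1}{r'}+\eps$ and comparing powers of $\eps$ on both sides. Also, the exclusion $q=r'$ does not need a separate test function: it drops out of the Knapp computation itself, where the borderline case $\gamma=\tfrac{1}{q}$ produces a logarithmic divergence that forces the strict inequality $\tfrac{1}{q}<\tfrac{1}{r'}$ (Lemma~\ref{lem:NecII_radial}(iii)).
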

 
 Case dinstinctions show that Theorem~\ref{thm:main2}  implies Theorem~\ref{thm:BloomSampson2} 
 and fill the aforementioned gap: the equality case in (d) does not belong to the boundedness range whereas
 the case $\gamma=\frac{3}{2q}-\frac{1}{2r'}$ with $q\neq r'$ in (f) does. This is visualized in
 Figure~\ref{fig:Theorem6} below.

 \begin{figure}[htbp]
    \centering
    \includegraphics[scale=0.7]{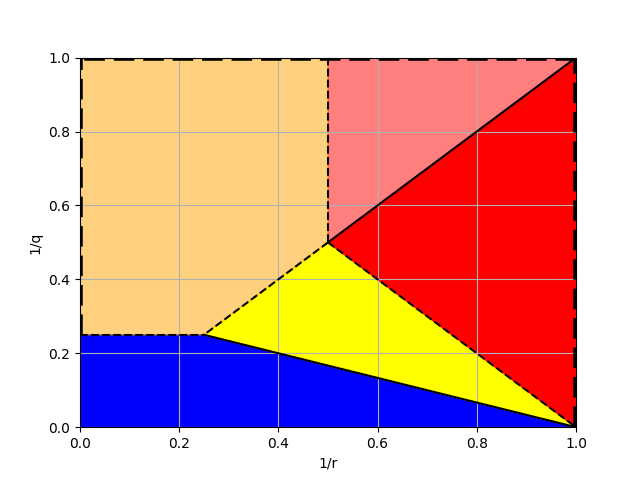}
    \caption{Riesz diagram for Theorem~\ref{thm:main2}:
    blue: $\gamma\geq 0$, orange: $\gamma>\frac{2}{q}-\frac{1}{2}$,
    yellow: $\gamma\geq \frac{3}{2q}-\frac{1}{2r'}$,
    red: $\gamma\geq \frac{2}{q}-\frac{1}{r'}$, 
    pale red: $\gamma> \frac{2}{q}-\frac{1}{r'}$, 
    dashed lines: strict inequalities, solid lines: non-strict inequalities. Our refinements of
    Theorem~\ref{thm:BloomSampson2} concern the upper half, the endpoint case 
    $\gamma=\frac{3}{2q}-\frac{1}{2r'}$ in the yellow region and the endpoint case
    $\gamma=\frac{1}{q}$ on the diagonal bottom right.}
    \label{fig:Theorem6}
  \end{figure}

\begin{rem}
  Our counterexamples show that the conditions
  $\gamma>\frac{2}{q}-\frac{1}{2}$ respectively $\alpha+\beta>\frac{2}{q}-\frac{1}{2}$ originate from the
  constant density on the sphere whereas all other conditions on $\alpha,\beta,\gamma$ are sharp in view of Knapp-type
  counterexamples. For the same reason as in Remark~\ref{rem:general}(c)  the a priori assumption $\gamma\geq
  0$ is actually a necessary condition for the estimates to hold.
\end{rem}

%

\section{Preliminaries}

  In this section we collect some known facts about real and complex interpolation theory
  for Lorentz spaces and mixed Lorentz spaces.  
  To put this into the abstract framework 
  used in the paper \cite{GrafakosMastylo} by
  Grafakos-Masty{\l}o, we start with quasi-Banach (function) lattices $X$ on a given measure space.  
  Such a lattice is a vector space of measurable functions that are finite almost everywhere. 
  By definition, it is complete with respect to a quasi-norm $\|\cdot\|_X$ and  solid, i.e.,  
  $|f|\leq |g|$ almost everywhere and $g\in X$ implies $f \in X$ with $\|f\|_{X} \leq\|g\|_{X}$.
  A quasi-Banach lattice $X$ is said to be maximal   whenever $0 \leq f_{n}
  \uparrow f$ almost everywhere with $f_{n} \in X$ and $\sup _{n \geq 1}\left\|f_{n}\right\|_{X}<\infty$
  implies that $f \in X$ and $\left\|f_{n}\right\|_{X} \rightarrow\|f\|_{X}$. In other words, we have some
  sort of Monotone Convergence Theorem in $X$. It is called $p$-convex for $0<p<\infty$ if there exists a
  constant $C>0$ such that for any $n\in\N$ and $f_{1}, \ldots, f_{n} \in X$ we have 
  $$
  \left\|\left(\sum_{k=1}^{n}\left|f_{k}\right|^{p}\right)^{\frac{1}{p}}\right\|_{X} \leq
  C\left(\sum_{k=1}^{n}\left\|f_{k}\right\|_{X}^{p}\right)^{\frac{1}{p}} .
$$
A quasi-Banach lattice is said to have nontrivial convexity whenever it is $p$-convex for some
$0<p<\infty$. For us the important fact is that Lorentz spaces and  mixed Lorentz spaces 
belong to this class of spaces. To state this more precisely, we say that a tuple of exponents $(p,r)$ is 
Lorentz-admissible if $0<p<\infty,0<r\leq \infty$ or $p=r=\infty$. 
We then write $L^{p,r}:=L^{p,r}(\R^d)$ for the standard Lorentz spaces and $L^{\vec p,\vec
r}:=L^{p,r}(\R^{d-k};L^{p,r}(\R^k))$ for mixed Lorentz spaces where $d\in\N$ and
$k\in\{1,\ldots,d-1\}$.
Recall that the latter is a quasi-Banach lattice equipped with the norm 
$$
  f\mapsto \Big\| \|f(x,y)\|_{L_y^{p,r}(\R^k)} \Big\|_{L_x^{p,r}(\R^{d-k})}
$$ 
where the subscripts  indicate the integration variable. 

  \begin{prop} 
    Let  $d\in\N$ and $k\in\{1,\ldots,d-1\}$ and let $(p,r)$ be Lorentz-admissible. Then the Lorentz space
    $L^{p,r}$ and the  mixed Lorentz space $L^{\vec p,\vec r}$ are maximal quasi-Banach lattices with
    nontrivial convexity.
  \end{prop}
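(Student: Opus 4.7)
The plan is to treat the scalar Lorentz space $L^{p,r}$ first and then lift the three properties to the mixed Lorentz space $L^{\vec p,\vec r}$ via a vector-valued argument. The corner case $p=r=\infty$ is immediate since $L^\infty$ is a Banach lattice, hence trivially maximal and $1$-convex; so throughout I would assume $0<p<\infty$ and $0<r\leq\infty$.

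For $L^{p,r}(\R^d)$ the quasi-Banach lattice structure is standard: expressing the quasi-norm through the decreasing rearrangement $f^*$ as $\|f\|_{p,r}=\bigl\|t^{1/p-1/r}f^*(t)\bigr\|_{L^r(0,\infty)}$ (with the obvious modification when $r=\infty$) immediately gives solidity, and completeness with respect to this quasi-norm is classical. For maximality I would use that $0\leq f_n\uparrow f$ almost everywhere implies $f_n^*\uparrow f^*$ pointwise on $(0,\infty)$, and then monotone convergence on the half-line yields $\|f_n\|_{p,r}\uparrow\|f\|_{p,r}$, so $f\in L^{p,r}$ whenever these norms remain bounded. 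For nontrivial convexity I would fix any $0<s<\min\{p,r\}$ (interpreting $\min\{p,\infty\}=p$) and reduce the $s$-convexity estimate to the triangle inequality in $L^{p/s,r/s}$ via the identity $\bigl\|(\sum_k|f_k|^s)^{1/s}\bigr\|_{p,r}^s=\bigl\|\sum_k|f_k|^s\bigr\|_{p/s,r/s}$; since $p/s>1$ and $r/s\geq 1$, the target space is equivalent to a Banach space and the required sublinear estimate is available.

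For the mixed Lorentz space $L^{\vec p,\vec r}=L^{p,r}_x\bigl(L^{p,r}_y\bigr)$ the three properties transfer from the scalar case by a short iteration. Lattice structure and completeness are automatic once they are known for the target of the outer norm. Maximality follows from two successive applications of Beppo Levi combined with the scalar statement already established: from $0\leq f_n\uparrow f$ a.e., scalar maximality in $y$ yields $\|f_n(x,\cdot)\|_{L^{p,r}_y}\uparrow\|f(x,\cdot)\|_{L^{p,r}_y}$ for almost every $x$, and a second application in $x$ concludes. For $s$-convexity with the same $s<\min\{p,r\}$ I would start from the pointwise quantity $(\sum_k|f_k(x,y)|^s)^{1/s}$ and apply scalar $s$-convexity first in $y$ — producing $(\sum_k\|f_k(x,\cdot)\|_{L^{p,r}_y}^s)^{1/s}$ inside the outer norm — and then a second time in $x$.

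The only delicate step is the $s$-convexity estimate in the scalar case, since the Lorentz quasi-norm is genuinely a quasi-norm and the endpoint $r=\infty$ forces one to work with weak $L^{p/s}$; here the fact that $\|\cdot\|_{p/s,\infty}$ is equivalent to a genuine norm for $p/s>1$ (an Aoki–Rolewicz type reduction) has to be invoked to control $\|\sum_k g_k\|_{p/s,\infty}$ by $\sum_k\|g_k\|_{p/s,\infty}$ before raising to the $1/s$-th power. Once this single inequality is secured, all remaining verifications are routine bookkeeping.
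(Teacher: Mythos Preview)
Your proposal is correct and follows exactly the route the paper sketches: the paper omits the proof, merely remarking that maximality of $L^{p,r}$ with $r<\infty$ comes from Fatou's Lemma and that maximality of $X(Y)$ follows from maximality of $X$ and $Y$; your argument via $f_n^*\uparrow f^*$ and iterated monotone convergence is precisely this, and you additionally supply the $s$-convexity verification (reduction to normability of $L^{p/s,r/s}$ with $p/s>1$) that the paper does not even hint at.
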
 

Note that in the case $r<\infty$ the maximality of Lorentz spaces follows from Fatou's Lemma and in the case
of mixed Lorentz spaces one uses that a mixed quasi-Banach lattice $X(Y)$ is maximal if both
$X,Y$ are maximal. We omit the details of the proof and turn towards its relevance for interpolation
theoretical applications. Real interpolation theory for
abstract quasi-Banach spaces is explained in \cite[Section 3.11]{BL}
and the important special case of Lorentz spaces can be found in the sections 5.2 and 5.3 in this book. 
An analogous theory for mixed Lorentz spaces, however, appears to be missing in the literature. 
For that reason we prove one basic result in Proposition~\ref{prop:embedding} below.
Moreover, we will use (complex) interpolation for analytic families of linear operators, which we 
 call  ``Stein interpolation'' in view of the fundamental contribution by
Stein~\cite{Stein1956} on this matter. Instead of the original version dealing with Banach spaces we use the
nontrivial extension to quasi-Banach lattices due to Grafakos and Masty{\l}o~\cite{GrafakosMastylo}.  

\medskip

 For a given couple of Banach spaces $(X_0,X_1)$ \cite[Section 2.3]{BL}  the symbol $[X_0,X_1]_\theta$
   denotes the standard complex interpolation space as defined in the book of Bergh-L\"ofstr\"om 
 \cite[p.88]{BL}. It is a Banach space equipped with the norm 
 $$
   \|f\|_{[X_0,X_1]_\theta} 
   = \inf_{F(\theta)=f,\, F\in\mathcal F}  
    \max\{\|F\|_{L^\infty(i\R;X_0)},\|F\|_{L^\infty(1+i\R;X_1)}\}. 
 $$ 
 Here, $\mathcal F$ denotes the set of functions $F:\ov S\to X_0+X_1$ that are bounded and
 continuous on $\ov S$, with $F:i\R\to X_0,F:1+i\R\to X_1$ vanishing at $\pm i\infty,1\pm i\infty$  and being
 analytic in the strip $S:=\{s\in\C: 0<\Re(s)<1\}$.
 For our application, all we need to know is the embedding $[X_0,X_1]_\theta\supset (X_0,X_1)_{\theta,1}$ from
 \cite[Theorem~4.7.1]{BL}.
 
 \medskip
 
 We shall moreover need another sort of interpolation space.
If $Y_{0}$ and $Y_{1}$ are quasi-Banach lattices over the same measure space and $0<\theta<1$, then their
Calder\'{o}n product $Y_{0}^{1-\theta} Y_{1}^{\theta}$  is defined as the vector space of all
measurable functions $g$ such that $|g| \leq\left|g_{0}\right|^{1-\theta}\left|g_{1}\right|^{\theta}$
holds almost everywhere  for some $g_0\in Y_0,g_1\in Y_1$. It 
is again a quasi-Banach lattice equipped with the quasi-norm 
$$ 
    \|g\|_{Y_{0}^{1-\theta} Y_{1}^{\theta}}=\inf
   \left\{\left\|g_{0}\right\|_{Y_{0}}^{1-\theta}\left\|g_{1}\right\|_{Y_{1}}^{\theta}: |g|
   \leq\left|g_{0}\right|^{1-\theta}\left|g_{1}\right|^{\theta}  \text {almost everywhere}, g_0\in
   Y_0, g_1\in Y_1\right\} .
$$

\medskip

Following~\cite{GrafakosMastylo} we finally introduce the notion of an admissible analytic family of linear
operators between couples $(X_0,X_1)$ and $(Y_0,Y_1)$ as introduced above. Let $\mathcal{X}$ be a subspace of
$X_0\cap X_1$. We assume that for every $z \in \bar{S}$ there is a linear map $T_z:\mathcal X\to Y_0\cap Y_1$ 
such that $T_{z}f$ is a complex-valued measurable  function that is finite almost everywhere for all
$f\in\mathcal X$. The family $(T_{z})_{z \in \bar{S}}$ is then said to be analytic if for any
$f\in\mathcal X$ and for almost all $x$ the function $z \mapsto T_{z}f(x)$ is  
analytic in $S$ and continuous on $\bar{S}$. If additionally $(z,x) \mapsto T_{z}f(x)$ is 
measurable for every $f \in \mathcal{X}$, then $(T_z)_{z\in\bar S}$ is called an admissible analytic family.
We shall need the following simplified version of Theorem 1.1 in \cite{GrafakosMastylo}. 

  \begin{thm}[Grafakos-Masty{\l}o] \label{thm:GrafMas}
    Let $(X_{0}, X_{1})$ be a couple of Banach spaces and let $(Y_{0},Y_{1})$ be a couple of
    maximal quasi-Banach lattices on a measure space such that $Y_0,Y_1$ have nontrivial convexity.
    Assume that $\mathcal X$ is a dense linear subspace of $X_{0} \cap X_{1}$ and that
    $(T_{z})_{z \in \bar{S}}$ is an admissible analytic family of linear operators $T_{z}:
    \mathcal X \rightarrow Y_{0}\cap Y_{1}$. Suppose that for every $f \in
    \mathcal X, t \in \mathbb{R}$ and $j=0,1$, 
    $$
	  \left\|T_{j+i t} f \right\|_{Y_{j}} \leq C_j \left\|f\right\|_{X_{j}}.  
	$$
    Then there is $C>0$ such that for all $f\in\mathcal X$, $s\in \mathbb{R}$ and $0<\theta<1$ we have 
    $$
	  \left\|T_{\theta+i s}f\right\|_{Y_0^{1-\theta} Y_1^{\theta}}
      \leq C    \|f\|_{[X_{0}, X_{1}]_\theta}. 
    $$ 
  \end{thm}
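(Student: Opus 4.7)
This is stated as a simplified version of Theorem~1.1 in~\cite{GrafakosMastylo}, so my approach is to verify that the hypotheses above fit into the framework of that reference and then quote it. The original result is formulated for a general pair of ``admissible'' interpolation parameters and for quasi-Banach couples on both sides; here we specialize the input side $(X_0,X_1)$ to be a Banach couple with the standard complex interpolation space $[X_0,X_1]_\theta$, which eliminates the need for the more delicate Calder\'on-product definition on the $X$-side. The first step of the plan is therefore purely bookkeeping: match the notation, note that the assumptions of admissibility (linearity of $T_z$, measurability in $(z,x)$, analyticity in $z$, continuity up to $\bar S$, boundary estimates) are exactly those we have imposed, and verify that the target couple $(Y_0,Y_1)$ satisfies the hypotheses (maximal, nontrivially convex quasi-Banach lattices) used by Grafakos-Masty{\l}o.

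In case one wants to reproduce the proof rather than quote it, the strategy I would follow is the classical one, but with the two nontrivial adaptations needed in the quasi-Banach lattice setting. First, fix $f\in\mathcal X$ with $\|f\|_{[X_0,X_1]_\theta}<1$ and pick an analytic extension $F\in\mathcal F$ with $F(\theta)=f$ and boundary control. Form the measurable function $G(z,x):=(T_zF(z))(x)$; thanks to admissibility and linearity this is analytic in $z\in S$ for a.e.\ $x$ and jointly measurable on $\bar S\times \R^d$. Second, apply a pointwise three-lines lemma (subharmonicity of $\log|G(z,x)|$ in $z$) to bound $|G(\theta,x)|$ by the geometric mean of suitable boundary averages of $|G(it,x)|$ and $|G(1+it,x)|$. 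Third, use the lattice structure together with the definition of $Y_0^{1-\theta}Y_1^{\theta}$ to promote this pointwise bound to a norm estimate of the form $\|G(\theta,\cdot)\|_{Y_0^{1-\theta}Y_1^{\theta}}\lesssim \sup_t\|G(it,\cdot)\|_{Y_0}^{1-\theta}\sup_t\|G(1+it,\cdot)\|_{Y_1}^{\theta}$, and then invoke the boundary hypotheses to bound this by $C_0^{1-\theta}C_1^{\theta}$ times $\|F\|_{L^\infty(i\R;X_0)}^{1-\theta}\|F\|_{L^\infty(1+i\R;X_1)}^{\theta}$. Taking the infimum over $F$ gives the desired conclusion.

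The main obstacle — and the technical heart of \cite{GrafakosMastylo} — is the third step: in the quasi-Banach setting the triangle inequality is lost, so the passage from the pointwise three-lines estimate to a norm bound cannot rely on the usual Minkowski-type reasoning. This is precisely where the $p$-convexity of $Y_0$ and $Y_1$ enters: after $p$-convexification the lattices become Banach lattices, and the Calder\'on product behaves well under this operation, so one may carry out the averaging / Poisson-kernel step there and then transfer the resulting estimate back via the inverse $p$-concavification. The maximality hypothesis is used to justify the monotone convergence arguments needed to pass from simple to general $f\in\mathcal X$ and to handle the measurability of $G(\theta,\cdot)$.
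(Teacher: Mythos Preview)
Your proposal is correct and matches the paper's treatment: the paper does not give a proof of this theorem at all, it simply states it as ``the following simplified version of Theorem~1.1 in~\cite{GrafakosMastylo}'' and uses it as a black box. Your plan to quote the reference after checking that the hypotheses match is exactly what is done, and your additional sketch of the underlying argument (pointwise three-lines lemma plus $p$-convexification to handle the quasi-Banach target) already goes beyond what the paper itself provides.
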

  
  \begin{rem} ~
    \begin{itemize}
      \item[(a)] Interpolation theorems in special quasi-Banach spaces can be found
      in~\cite{Sagher69,CwikelSagher1988}. However, none of those applies to mixed Lorentz spaces $L^{\vec
      p,\vec r}$ with $0<p<1$. This is why we need the rather recent result from \cite{GrafakosMastylo}.
      \item[(b)] In the context of Lorentz spaces $Y_0,Y_1$ the Calder\'{o}n product
      $Y_0^{1-\theta}Y_1^\theta$ is a Lorentz space for most Lorentz-admissible exponents, see
      \cite[Corollary~4.1]{GrafakosMastylo}. Moreover, under quite general assumptions
      on Banach spaces $Y_0,Y_1$ we have $Y_0^{1-\theta}Y_1^\theta = [Y_0,Y_1]_\theta$ 
      and suitable analogues are known for quasi-Banach lattices, see \cite[pp. 2-3]{Yuan2016}. In that
      sense Theorem~\ref{thm:GrafMas} generalizes the original result by Stein~\cite{Stein1956} (under
      slightly stronger assumptions on $T_z$).
    \end{itemize}
  \end{rem}

%

  In our application, it will turn out convenient to embed $[X_{0}, X_{1}]_\theta$ and $Y_0^{1-\theta}
  Y_1^{\theta}$ into real interpolation spaces. The embedding 
  $$
    (X_0,X_1)_{\theta,1}\subset [X_0,X_1]_\theta\subset (X_0,X_1)_{\theta,\infty}
  $$ 
  is well-known and turns out to be helpful. We now show the analogous statement for
  $Y_0^{1-\theta}Y_1^\theta$.
  
  \begin{prop} \label{prop:CaldprodEmbedding}
    Let $(Y_{0},Y_{1})$ be a couple of maximal quasi-Banach lattices on a measure space. Then
    $(Y_0,Y_1)_{\theta,1}\subset Y_0^{1-\theta}Y_1^\theta\subset (Y_0,Y_1)_{\theta,\infty}$  for
    $0<\theta<1$.
  \end{prop}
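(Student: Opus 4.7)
The plan is to establish the two inclusions independently by explicit pointwise constructions that respect the lattice structure of $Y_0$ and $Y_1$.

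\emph{The inclusion $Y_0^{1-\theta}Y_1^\theta \subset (Y_0,Y_1)_{\theta,\infty}$} is the easier direction. Given $|f| \leq |g_0|^{1-\theta}|g_1|^\theta$ almost everywhere with $g_0\in Y_0$, $g_1\in Y_1$, I split $f = f_s^{(0)}+f_s^{(1)}$ along the sets $\{|g_1|\leq s|g_0|\}$ and its complement, for each threshold $s>0$. Solidity yields the pointwise bounds $|f_s^{(0)}|\leq s^\theta|g_0|$ and $|f_s^{(1)}|\leq s^{-(1-\theta)}|g_1|$, so that
\[
K(t,f) \leq s^\theta\|g_0\|_{Y_0} + ts^{-(1-\theta)}\|g_1\|_{Y_1}.
\]
Optimizing in $s$ gives $K(t,f) \lesssim t^\theta \|g_0\|_{Y_0}^{1-\theta}\|g_1\|_{Y_1}^\theta$; passing to $\sup_{t>0} t^{-\theta}K(t,f)$ and to the infimum over admissible pairs $(g_0,g_1)$ concludes this direction.

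\emph{The inclusion $(Y_0,Y_1)_{\theta,1}\subset Y_0^{1-\theta}Y_1^\theta$} is more delicate. Applying the fundamental lemma of real interpolation in its discrete $J$-form, I pick for $f$ in the source space a decomposition $f = \sum_{k\in\Z} u_k$ with $u_k\in Y_0\cap Y_1$ satisfying $\max(\|u_k\|_{Y_0}, 2^k\|u_k\|_{Y_1}) \leq 2^{k\theta}\nu_k$ and $\sum_k \nu_k \lesssim \|f\|_{(Y_0,Y_1)_{\theta,1}}$. The weighted AM--GM identity $|u_k| = (2^{-k\theta}|u_k|)^{1-\theta}(2^{k(1-\theta)}|u_k|)^\theta$ together with the discrete H\"older inequality $\sum a_k^{1-\theta}b_k^\theta \leq (\sum a_k)^{1-\theta}(\sum b_k)^\theta$ (applied pointwise in the measure-space variable) yields $|f|\leq g_0^{1-\theta}g_1^\theta$ with
\[
g_0 := \sum_{k\in\Z} 2^{-k\theta}|u_k|, \qquad g_1 := \sum_{k\in\Z} 2^{k(1-\theta)}|u_k|.
\]
It then remains to estimate $\|g_j\|_{Y_j}\lesssim \sum_k \nu_k$, which immediately gives $\|f\|_{Y_0^{1-\theta}Y_1^\theta} \leq \|g_0\|_{Y_0}^{1-\theta}\|g_1\|_{Y_1}^\theta \lesssim \|f\|_{(Y_0,Y_1)_{\theta,1}}$. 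For Banach lattices this is just the absolute triangle inequality; in the general quasi-Banach setting I invoke maximality (monotone convergence on the increasing partial sums of non-negative functions) to reduce to finite sums, and then use the Aoki--Rolewicz equivalent $r$-norm together with the nontrivial convexity of $Y_j$ to obtain the required quasi-linearity estimate.

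I expect the main obstacle to be the last step of the left inclusion. In a quasi-Banach lattice only an $r$-version $\|g_0\|_{Y_0}^r \leq \sum_k 2^{-rk\theta}\|u_k\|_{Y_0}^r$ is directly available from the $r$-norm, which a priori is weaker than the target $\ell^1$-bound $\|g_0\|_{Y_0}\lesssim\sum_k\nu_k$. Closing this gap requires a careful exploitation of the $p$-convexity assumed in the statement, possibly combined with a refined (non-dyadic or regrouped) choice of the $J$-decomposition so that the surviving sums obey the correct summability.
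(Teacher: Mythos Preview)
Your argument for $Y_0^{1-\theta}Y_1^\theta \subset (Y_0,Y_1)_{\theta,\infty}$ is correct and essentially the paper's: the paper reaches the same $K$-functional estimate by rescaling the majorants ($\tilde g_0 := s^{-\theta}g_0$, $\tilde g_1 := s^{1-\theta}g_1$ with $s=s(t)>0$ chosen to equalize the two terms) rather than by a pointwise level-set cutoff, and then cites Bergh--L\"ofstr\"om, Theorem~3.11.4(a). The two routes are interchangeable.

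For $(Y_0,Y_1)_{\theta,1}\subset Y_0^{1-\theta}Y_1^\theta$ the paper takes a much shorter route that bypasses the obstacle you flag. Instead of building explicit majorants from a $J$-decomposition, it simply observes that for every $g\in Y_0\cap Y_1$ the choice $g_0=g_1=g$ in the definition of the Calder\'on-product quasi-norm gives
\[
\|g\|_{Y_0^{1-\theta}Y_1^\theta}\leq \|g\|_{Y_0}^{1-\theta}\|g\|_{Y_1}^\theta,
\]
which is exactly the hypothesis of Bergh--L\"ofstr\"om, Theorem~3.11.4(b) (stated there for quasi-normed intermediate spaces); the embedding then follows at once. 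No infinite summation in the lattice is performed, and in particular no convexity hypothesis is invoked --- which matters, since the Proposition assumes only maximality. Your hands-on construction is fine when $Y_0,Y_1$ are Banach lattices, but the difficulty you anticipate is real in the genuinely quasi-Banach range: controlling $\bigl\|\sum_k 2^{-k\theta}|u_k|\bigr\|_{Y_0}$ by $\sum_k \nu_k$ fails in general, and $p$-convexity with $p<1$ together with Aoki--Rolewicz only yields a bound by $\bigl(\sum_k\nu_k^p\bigr)^{1/p}$, which does not recover the $\ell^1$ control furnished by the $(\theta,1)$ $J$-method (a refined decomposition with $\ell^p$ control would amount to membership in the strictly smaller space $(Y_0,Y_1)_{\theta,p}$). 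So your proposed fix does not close the gap, whereas the paper's one-line verification plus citation avoids it altogether.
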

  \begin{proof}
   Set $Y:= Y_0^{1-\theta}Y_1^\theta$.  For all $g\in Y_0\cap Y_1$ we have by definition of the norm
   $$
     \|g\|_{Y}
     = \inf_{|g|\leq |g_0|^{1-\theta}|g_1|^\theta \atop 
     g_0\in Y_0, g_1\in Y_1} \|g_0\|_{Y_0}^{1-\theta}\|g_1\|_{Y_1}^\theta
     \leq \|g\|_{Y_0}^{1-\theta}\|g\|_{Y_1}^\theta
   $$ 
   by choosing $g_0:=g_1:=g$. So Theorem 3.11.4~(b) in~\cite{BL} gives 
   $(Y_0,Y_1)_{\theta,1}\subset Y$. \\  
   To prove the other embedding let $g\in Y$ and fix $t>0$. Choose $g_0\in Y_0,g_1\in
   Y_1$ such that $|g|\leq |g_0|^{1-\theta}|g_1|^\theta$ almost everywhere and
   $\|g_0\|_{Y_0}^{1-\theta}\|g_1\|_{Y_1}^\theta \leq 2\|g\|_Y$. Then
   we equally have
   $|g|\leq |\tilde g_0|^{1-\theta}|\tilde g_1|^\theta$ with $\tilde g_0=s^{-\theta} g_0 \in Y_0,\tilde
   g_1=s^{1-\theta}g_1 \in Y_1$ for any given $s>0$. We may choose $s:= \|g_0\|_{Y_0} (t\|g_1\|_{Y_1})^{-1}$
   and obtain
   \begin{align*}
     \max\left\{ t^{-\theta} \|\tilde g_0\|_{Y_0}, t^{1-\theta}\|\tilde g_1\|_{Y_1}\right\}
     &= \max\left\{ (st)^{-\theta} \| g_0\|_{Y_0}, (st)^{1-\theta}\|g_1\|_{Y_1}\right\} \\
     &= \|g_0\|_{Y_0}^{1-\theta}  \|g_1\|_{Y_1}^\theta \\
     &\leq 2     \|g\|_{Y}.
   \end{align*}
    So we conclude that for all $t>0$ there are $\tilde g_0\in Y_0,\tilde g_1\in Y_1$ satisfying the above
    inequality. Then Theorem 3.11.4~(a) in \cite{BL} gives $Y\subset (Y_0,Y_1)_{\theta,\infty}$,
    which is all we had to show.
  \end{proof}
   
   In the case of Lebesgue or Lorentz spaces $X_0,X_1,Y_0,Y_1$   
   standard real interpolation theory \cite[Theorem~5.3.1]{BL} allows to compute the associated real
   interpolation spaces. In the case of mixed Lorentz spaces, however, we need some partial extension of a result from
   \cite{Mandel2023Real}. 
  
  \begin{prop} \label{prop:embedding}
    Assume $d\in\N,k\in\{1,\ldots,d-1\}$,     
    $0<r\leq \infty$ and $0<q_0\neq q_1\leq \infty$. Then, for $0<\theta<1$ and
    $\frac{1}{q_\theta}=\frac{1-\theta}{q_0}+\frac{\theta}{q_1}$,
    \begin{align*}
      \text{(i)}\quad
      L^{q_\theta,r}(\R^d) 
      &=
      \left(L^{q_0,\infty}(\R^{d-k};L^{q_0,\infty}(\R^k)),L^{q_1,\infty}(\R^{d-k};L^{q_1,\infty}(\R^k))\right)_{\theta,r}, \\
      \text{(ii)}\quad L^{q_\theta,r}(\R^d) 
      &=
      \left(L^{q_0}(\R^d),L^{q_1,\infty}(\R^{d-k};L^{q_1,\infty}(\R^k))\right)_{\theta,r}, \\
      \text{(iii)}\quad L^{q_\theta,\infty}(\R^d) 
      &\supset
       L^{q_0}(\R^d)^{1-\theta}\left(L^{q_1,\infty}(\R^{d-k};L^{q_1,\infty}(\R^k))\right)^{\theta}.
    \end{align*}  
  \end{prop}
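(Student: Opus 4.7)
The plan has four parts, with the bulk of the work concentrated in a single K-functional estimate. First, I observe that (iii) is an immediate consequence of (ii): Proposition~\ref{prop:CaldprodEmbedding} applied to $Y_0 := L^{q_0}(\R^d)$ and $Y_1 := L^{q_1,\infty}(\R^{d-k};L^{q_1,\infty}(\R^k))$ gives $Y_0^{1-\theta}Y_1^\theta \subset (Y_0,Y_1)_{\theta,\infty}$, and (ii) with $r=\infty$ identifies the right-hand side with $L^{q_\theta,\infty}(\R^d)$.

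Next, the forward inclusions $L^{q_\theta,r}(\R^d) \subset (\cdot,\cdot)_{\theta,r}$ in both (i) and (ii) follow from the classical real interpolation identity $(L^{q_0}(\R^d),L^{q_1}(\R^d))_{\theta,r}=L^{q_\theta,r}(\R^d)$ of \cite[Theorem~5.3.1]{BL} combined with monotonicity of real interpolation, once one knows
\[
  \|f\|_{L^{q_j,\infty}(\R^{d-k};L^{q_j,\infty}(\R^k))}
  \leq \bigl\|\, \|f(x,\cdot)\|_{L^{q_j}(\R^k)} \,\bigr\|_{L^{q_j}(\R^{d-k})}
  = \|f\|_{L^{q_j}(\R^d)},
\]
which is a two-step application of Chebyshev's inequality (first in $y$, then in $x$). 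This embedding is applied at both endpoints for (i) and only at the $q_1$-endpoint for (ii). The very same embedding reduces the reverse inclusion in (ii) to the reverse inclusion in (i), since $L^{q_0}(\R^d) \hookrightarrow L^{q_0,\infty}_x L^{q_0,\infty}_y$ implies $(L^{q_0}(\R^d),L^{q_1,\infty}_x L^{q_1,\infty}_y)_{\theta,r} \subset (L^{q_0,\infty}_x L^{q_0,\infty}_y,L^{q_1,\infty}_x L^{q_1,\infty}_y)_{\theta,r}$. Thus the remaining task is to establish the reverse inclusion in (i).

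This reverse inclusion is the main obstacle. The difficulty is that the mixed weak Lorentz space is strictly larger than the ordinary weak Lorentz space $L^{q_j,\infty}(\R^d)$ --- already in $d=2$, $k=1$ the function $f(x,y)=(|x||y|)^{-1/q}\mathbf{1}_{[-1,1]^2}(x,y)$ lies in the mixed but not in the ordinary space --- so one cannot embed the endpoints of the couple into $L^{q_j,\infty}(\R^d)$ and simply invoke $(L^{q_0,\infty}(\R^d),L^{q_1,\infty}(\R^d))_{\theta,r}=L^{q_\theta,r}(\R^d)$. Instead, I would work directly with the $K$-functional of the mixed couple: for each $t>0$, take a near-optimal decomposition $f=f_0(t)+f_1(t)$ with $\|f_j(t)\|_{L^{q_j,\infty}_x L^{q_j,\infty}_y}$ controlled, apply Fubini in $x$ to the weak-type bounds on the slice norms $x\mapsto \|f_j(t)(x,\cdot)\|_{L^{q_j,\infty}_y}$ to produce a joint distribution-function estimate for $f$, and then optimize the splitting scale $t$ as a function of the level $\mu>0$ via the scaling relation $\tfrac{1}{q_\theta}=\tfrac{1-\theta}{q_0}+\tfrac{\theta}{q_1}$. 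The resulting pointwise bound on the decreasing rearrangement $f^*(\mu)$ in terms of $\mu^{-1/q_\theta}$ and $K(t(\mu),f)$, integrated against $d\mu/\mu$ in the appropriate $L^r$-quasi-norm, delivers membership of $f$ in $L^{q_\theta,r}(\R^d)$. This step is a direct adaptation of the bivariate $K$-functional computation carried out in~\cite{Mandel2023Real}, and the use of strict inequality $q_0\neq q_1$ is essential for the optimization to succeed.
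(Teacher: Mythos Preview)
Your reductions are correct and track the paper's: (iii) follows from (ii) via Proposition~\ref{prop:CaldprodEmbedding}, the forward inclusions in (i) and (ii) come from $L^{q_j}(\R^d)\hookrightarrow L^{\vec q_j,\vec\infty}$ together with the classical identity $(L^{q_0},L^{q_1})_{\theta,r}=L^{q_\theta,r}$, and the reverse inclusion in (ii) reduces to that in (i).

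Where you diverge from the paper is in the hard step, the reverse inclusion in (i). The paper does \emph{not} compute the $K$-functional directly. Instead it quotes the multiplicative (H\"older-type) inequality
\[
  \|g\|_{L^{q_\theta}(\R^d)} \;\lesssim\; \|g\|_{L^{\vec q_0,\vec\infty}}^{\,1-\theta}\,\|g\|_{L^{\vec q_1,\vec\infty}}^{\,\theta}
\]
from \cite[Theorem~2.22]{ChenSun2020}, feeds this into \cite[Theorem~3.11.4(b)]{BL} to obtain
$(L^{\vec q_0,\vec\infty},L^{\vec q_1,\vec\infty})_{\theta,1}\subset L^{q_\theta}$, and then upgrades from the second index $1$ to an arbitrary $r$ by the Reiteration Theorem \cite[Theorem~3.11.5]{BL} applied at two nearby values $\theta_0\neq\theta_1$. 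This is short and purely abstract once the Chen--Sun inequality is available.

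Your route --- a direct $K$-functional estimate modelled on \cite{Mandel2023Real} --- is a legitimate alternative, and indeed the paper flags the result as an extension of that reference. But your sketch is thin at the decisive point: the phrase ``apply Fubini in $x$ to the weak-type bounds \ldots\ to produce a joint distribution-function estimate for $f$'' is exactly where the work lies, and it is not a Fubini argument in any literal sense (you yourself note that $L^{\vec q,\vec\infty}\not\subset L^{q,\infty}$, so no single level set of $f$ is controlled by either endpoint alone). What is really needed there is precisely a distributional version of the Chen--Sun inequality above, obtained by truncating $f$ at a level depending on both $x$ and the slice norm. If you want to carry out the direct approach, that inequality --- or an equivalent two-scale truncation argument --- must be stated and proved, not absorbed into ``Fubini''.
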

  \begin{proof} 
     Theorem~2.22 in \cite{ChenSun2020} provides the inequality  
     $\|g\|_{L^{q_\theta}}\les  \|g\|_{L^{\vec q_0,\vec\infty}}^{1-\theta}
     \|g\|_{L^{\vec q_1,\vec\infty}}^\theta$ for all $q_0,q_1\in (0,\infty]$ with $q_0\neq q_1$.
     As in the previous  Proposition this can be combined with \cite[Theorem 3.11.4]{BL}  to deduce,  for
     all $\mu\in [q_0,\infty]$, 
     $$
      L^{q_\theta} 
      \supset (L^{\vec q_0,\vec\infty}, L^{\vec q_1,\vec\infty})_{\theta,1}
      \supset (L^{\vec q_0,\vec\mu}, L^{\vec q_1,\vec\infty})_{\theta,1}.
    $$
     Exploiting this identity for $\theta\in\{\theta_0,\theta_1\}$ with $0<\theta_0<\theta_1<1$ 
     as well as
     the Reiteration Theorem \cite[Theorem 3.11.5]{BL}  we deduce the embedding
    \begin{align*}
      &L^{q_\theta,r}
      = (L^{q_{\theta_0}},L^{q_{\theta_1}})_{\vartheta,r}  
      \supset \left((L^{\vec q_0,\vec\mu}, L^{\vec q_1,\vec\infty})_{\theta_0,1} 
      , (L^{\vec q_0,\vec\mu}, L^{\vec q_1,\vec\infty})_{\theta_1,1}\right)_{\vartheta,r}
      \supset (L^{\vec
      q_0,\vec{\mu}}, L^{\vec q_1,\vec\infty})_{\theta,r} \\
      &\text{whenever}\quad
      \theta=(1-\vartheta)\theta_0+\vartheta\theta_1,\; 0<\vartheta,\theta_0\neq\theta_1<1,\; r\in (0,\infty].
    \end{align*}
    On the other hand, the embeddings
    $L^{\vec q_0,\vec{\mu}}\supset L^{q_0}$ and $L^{\vec q_1,\vec\infty}\supset L^{q_1}$ imply  
    $$
      (L^{\vec q_0,\vec{\mu}}, L^{\vec q_1,\vec\infty})_{\theta,r}
      \supset (L^{q_0}, L^{q_1})_{\theta,r}
      = L^{q_\theta,r}.
    $$
    Combining both inclusions we infer (i) by choosing $\mu=\infty$ and (ii) by choosing
    $\mu=q_0$.
    Finally, (iii) follows from (ii) and the embedding $(Y_0,Y_1)_{\theta,\infty}\supset
    Y_0^{1-\theta}Y_1^\theta$ that we  established in Proposition~\ref{prop:CaldprodEmbedding}.
  \end{proof}

\section{Proof of Theorem~\ref{thm:main} -- Sufficient conditions}
  
  In this section we analyze the mapping properties of $\mathcal R^*_{\alpha,\beta}:L^r(\S)\to L^q(\R^2)$. 
  Since the claim is trivial for $q=\infty$, we shall assume $0<q<\infty$. Moreover, without loss
  of generality, we assume $\alpha\geq\beta\geq 0$. We shall need the following auxiliary result.
    
\begin{prop} \label{prop:Pitttype}
  Assume $1\leq p\leq 2,0<q< \infty$ and 
  $0\leq \frac{1}{q}-\frac{1}{p'}\leq \beta<\infty$.
  Then
  $$
    \| (1+|\cdot|)^{-\beta}\hat f\|_{L^{q,\infty}(\R)} \les \|f\|_{L^p(\R)}.
  $$
\end{prop}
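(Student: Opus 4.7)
The plan is to combine the Hausdorff--Young inequality with H\"older's inequality in Lorentz spaces. Setting $\beta_0 := \frac{1}{q} - \frac{1}{p'} \ge 0$, I would first reduce to the case $\beta = \beta_0$: since $\beta \ge \beta_0 \ge 0$, the pointwise bound $(1+|x|)^{-\beta} \le (1+|x|)^{-\beta_0}$ gives
\[ \|(1+|\cdot|)^{-\beta} \hat f\|_{L^{q,\infty}(\R)} \le \|(1+|\cdot|)^{-\beta_0} \hat f\|_{L^{q,\infty}(\R)}, \]
so the monotonicity of the weight in $\beta$ disposes of the gap between $\beta$ and the scaling exponent $\beta_0$, and I can work from now on with the critical exponent $\beta_0$.

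Next, I would split into two cases. When $\beta_0 = 0$ (that is, $q = p'$), the bound follows at once from Hausdorff--Young, $\|\hat f\|_{L^{p'}(\R)} \lesssim \|f\|_{L^p(\R)}$, combined with the trivial embedding $L^{p'}(\R) \hookrightarrow L^{p',\infty}(\R)$. For $\beta_0 > 0$, the weight $(1+|\cdot|)^{-\beta_0}$ lies in $L^{1/\beta_0,\infty}(\R)$ with quasinorm depending only on $\beta_0$, and Hausdorff--Young places $\hat f$ in $L^{p',\infty}(\R)$ with quasinorm controlled by $\|f\|_{L^p(\R)}$. O'Neil's H\"older inequality in Lorentz spaces, applied with the matched primary indices $\frac{1}{q} = \beta_0 + \frac{1}{p'}$---which is precisely the defining equation for $\beta_0$---then yields
\[ \|(1+|\cdot|)^{-\beta_0} \hat f\|_{L^{q,\infty}} \lesssim \|(1+|\cdot|)^{-\beta_0}\|_{L^{1/\beta_0,\infty}} \|\hat f\|_{L^{p',\infty}} \lesssim \|f\|_{L^p}, \]
which is the asserted estimate.

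I do not anticipate a serious obstacle: the argument is a chain of well-known inequalities whose Lorentz exponents line up by construction. The only points requiring care are to verify that O'Neil's H\"older applies with both secondary indices equal to $\infty$ so that the product indeed lands in $L^{q,\infty}$ (this is the standard form of the estimate), and that the degenerate case $p=1$ (where $p'=\infty$ and $\beta_0 = 1/q$) fits smoothly into the same framework, the H\"older step then collapsing to the trivial $L^\infty$ bound $\|\hat f\|_\infty \le \|f\|_1$ multiplied by $\|(1+|\cdot|)^{-1/q}\|_{L^{q,\infty}}$.
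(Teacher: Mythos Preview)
Your argument is correct and coincides with the paper's proof: both combine the Hausdorff--Young inequality with H\"older's inequality in Lorentz spaces, using that $(1+|\cdot|)^{-\beta}\in L^{r,\infty}(\R)$ for $\frac{1}{r}=\frac{1}{q}-\frac{1}{p'}\leq\beta$. The only cosmetic difference is that you first reduce to the critical weight $\beta_0=\frac{1}{q}-\frac{1}{p'}$ and then apply H\"older, whereas the paper keeps the original $\beta$ and applies H\"older directly with the exponent $r$ determined by $\frac{1}{q}=\frac{1}{r}+\frac{1}{p'}$; these are equivalent.
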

\begin{proof}
  Define $r$ via $\frac{1}{q}= \frac{1}{r}+\frac{1}{p'}$, so $r\in  [\frac{1}{\beta},\infty]$. 
  Then the claim follows from the Lorentz space version of  H\"older's Inequality and the
  Hausdorff-Young Inequality via
  \begin{align*}
    \| (1+|\cdot|)^{-\beta}\hat f\|_{L^{q,\infty}(\R)}
    \les \| (1+|\cdot|)^{-\beta}\|_{L^{r,\infty}(\R)} \|\hat f\|_{L^{p',\infty}(\R)}
    \les \|\hat f\|_{L^{p'}(\R)} 
    \les \|f\|_{L^p(\R)}.
  \end{align*}
\end{proof}

  We deduce some  mixed restricted weak-type bounds for $\mathcal
  R^*_{\alpha,\beta}$ for $\alpha\geq \frac{1}{q}$.  


  \begin{lem}\label{lem:weaktypebounds_largealpha}
      Assume $0\leq\beta\leq \alpha<\infty$ and $0<q<\infty,1\leq r\leq \infty$.
      \begin{itemize}
        \item[(i)] If $r<\infty$, $\alpha\geq \frac{1}{q}$ and $2\beta\geq \frac{2}{q}-\frac{1}{r'}$,  
         then  $\mathcal R_{\alpha,\beta}^*:L^{r,1}(\sph{1})\to
        L^{q,\infty}(\R;L^{q,\infty}(\R))$ is bounded. 
        \item[(ii)] If $r=\infty$, $\alpha\geq \frac{1}{q}$ and
      $2\beta>\frac{2}{q}-\frac{1}{r'}$, then $\mathcal R_{\alpha,\beta}^*:L^r(\sph{1})\to
      L^{q,\infty}(\R;L^{q,\infty}(\R))$ is bounded.
      \end{itemize}
  \end{lem}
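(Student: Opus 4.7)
The plan is to view $(\mathcal R^*F)(x,\cdot)$ as a Fourier transform in $y$ of a compactly supported function, apply Proposition~\ref{prop:Pitttype} to pick up the inner weight $(1+|y|)^{-\beta}$, and then handle the outer weight via the embedding $(1+|x|)^{-\alpha}\in L^{q,\infty}_x(\R)$ guaranteed by $\alpha\geq \frac{1}{q}$. The Jacobian singularity $(1-\omega_2^2)^{-1/2}$ of the $\omega_2$-parametrization of $\S$ is absorbed via the Lorentz--H\"older inequality, which produces precisely the Lemma's hypothesis on $\beta$ for an optimal choice of the Lebesgue exponent $p_2$.

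Concretely, writing $F_\pm(\omega_2):=F(\pm\sqrt{1-\omega_2^2},\omega_2)$ and
\[
H(x,\omega_2):=\frac{e^{ix\sqrt{1-\omega_2^2}}F_+(\omega_2)+e^{-ix\sqrt{1-\omega_2^2}}F_-(\omega_2)}{\sqrt{1-\omega_2^2}}\,\ind_{(-1,1)}(\omega_2),
\]
one has $(\mathcal R^*F)(x,y)=\frac{1}{2\pi}\int_\R e^{iy\omega_2}H(x,\omega_2)\,d\omega_2$. For any $p_2\in[1,2]$ with $\beta\geq \frac{1}{q}-\frac{1}{p_2'}$, Proposition~\ref{prop:Pitttype} applied in the $y$-variable, combined with the $x$-independent pointwise bound $|H(x,\omega_2)|\leq (|F_+|+|F_-|)(1-\omega_2^2)^{-1/2}$ and the finiteness of $\|(1+|x|)^{-\alpha}\|_{L^{q,\infty}_x}$, yields
\[
\bigl\|(1+|x|)^{-\alpha}(1+|y|)^{-\beta}\mathcal R^*F\bigr\|_{L^{q,\infty}_x L^{q,\infty}_y}\les \Bigl(\int_{-1}^{1}(|F_+|+|F_-|)^{p_2}(1-\omega_2^2)^{-p_2/2}\,d\omega_2\Bigr)^{1/p_2}.
\]

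The decisive step is bounding this right-hand side by $\|F\|_{L^{r,1}(\S)}$ with a choice of $p_2$ aligning the Pitt condition to the Lemma's hypothesis; I choose $p_2:=\frac{2r}{r+1}\in[1,2]$ in case~(i), which gives $\frac{1}{p_2'}=\frac{1}{2r'}$, while in case~(ii) the strict inequality $\beta>\frac{1}{q}-\frac{1}{2}$ allows me to take $p_2<2$ with a touch of room. Switching to the angular coordinate $\omega_2=\sin\phi$ converts $(1-\omega_2^2)^{-p_2/2}\,d\omega_2$ into $|\cos\phi|^{1-p_2}\,d\phi$, and the weight $|\cos\phi|^{1-p_2}$ belongs to $L^{1/(p_2-1),\infty}(\S)$ for $p_2>1$. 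Lorentz--H\"older combined with the rearrangement identity $\||g|^{p_2}\|_{L^{a,u}}=\|g\|_{L^{ap_2,up_2}}^{p_2}$ applied with $a=\frac{1}{2-p_2}$, $u=1$ then yields
\[
\int_\S |F|^{p_2}|\cos\phi|^{1-p_2}\,d\phi\les \|F\|_{L^{p_2/(2-p_2),p_2}(\S)}^{p_2}=\|F\|_{L^{r,p_2}(\S)}^{p_2}\les \|F\|_{L^{r,1}(\S)}^{p_2},
\]
via the algebraic identity $\frac{p_2}{2-p_2}=r$ and the nesting $L^{r,1}\hookrightarrow L^{r,p_2}$ (valid since $p_2\geq 1$); case~(ii) is analogous with $\|F\|_{L^\infty(\S)}$ in place of $\|F\|_{L^{r,1}(\S)}$, using that $\S$ has finite measure.

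The main difficulty is this alignment of exponents: $p_2=\frac{2r}{r+1}$ is the unique value for which the Pitt condition, the rearrangement identity identifying the first Lorentz index of $|F|^{p_2}$ as $r$, and the nesting of Lorentz spaces all conspire to reproduce precisely the Lemma's optimal hypothesis $2\beta\geq \frac{2}{q}-\frac{1}{r'}$. The degenerate case $r=1$ has $p_2=1$, no singularity appears, and one recovers the trivial pointwise bound $|\mathcal R^*F|\leq \|F\|_{L^1(\S)}$.
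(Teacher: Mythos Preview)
Your argument is essentially the paper's: write $(\mathcal R^*F)(x,\cdot)$ as a one-dimensional Fourier transform, apply the Pitt-type bound of Proposition~\ref{prop:Pitttype} in $y$, use $\alpha\geq\frac{1}{q}$ to control the outer $L^{q,\infty}_x$-norm, and absorb the Jacobian $(1-s^2)^{-1/2}$ via Lorentz--H\"older. The only difference is bookkeeping: the paper picks $p$ so that the Pitt condition holds with equality, i.e.\ $\frac{1}{p}=\frac{1}{q'}+\beta$ (after reducing w.l.o.g.\ to $\beta\leq\frac{1}{q}$), and then deduces that the resulting Lorentz index $\mu=(\frac{2}{p}-1)^{-1}$ satisfies $\mu\leq r$; you instead fix $p_2=\frac{2r}{r+1}$ so that the Lorentz index is exactly $r$, and then read off the Pitt condition as $\beta\geq\frac{1}{q}-\frac{1}{2r'}$. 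These are two parametrizations of the same computation.

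One small gap: Proposition~\ref{prop:Pitttype} also requires the lower bound $0\leq\frac{1}{q}-\frac{1}{p_2'}$, which with your choice reads $\frac{1}{q}\geq\frac{1}{2r'}$. This can fail (large $q$, small $r'$), and you do not check it. When it fails one has $\frac{2}{q}-\frac{1}{r'}<0$, so the hypothesis on $\beta$ is vacuous and you are in a strictly easier regime; the fix is immediate, e.g.\ replace $r$ by the larger $\tilde r$ determined by $\frac{1}{2\tilde r'}=\frac{1}{q}$ and use $L^{r,1}(\S)\hookrightarrow L^{\tilde r,1}(\S)$, or simply reduce to $\beta\leq\frac{1}{q}$ and parametrize as the paper does.
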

  \begin{proof}
    We may w.l.o.g. assume $\beta\leq \frac{1}{q}$ and     
    that $F$ vanishes in the left half of the sphere. Set
    \begin{align*}
      F_\star(\phi)
       :=F(\cos(\phi),\sin(\phi)),\qquad 
       G_{x}(s) 
       := \sqrt{2\pi} F_\star(-\arcsin(s)) (1-s^2)^{-\frac{1}{2}} e^{ix(1-s^2)^{\frac{1}{2}}}
       \ind_{[-1,1]}(s).
    \end{align*}
    Then
   \begin{align*}
     (\mathcal R^*F)(x,y)
     &= \int_{\sph{1}} F(z) e^{i(x,y)\cdot (z_1,z_2)}\,d\sigma(z) \\
     &= \int_{-\frac{\pi}{2}}^{\frac{\pi}{2}} F_\star(\phi) e^{i(x,y)\cdot(\cos(\phi),\sin(\phi))}\,d\phi \\
     &= \int_{-1}^1 F_\star(\arcsin(s))(1-s^2)^{-\frac{1}{2}} e^{ix(1-s^2)^{\frac{1}{2}}} e^{isy} \,ds \\
     &= \frac{1}{\sqrt{2\pi}} \int_\R G_x(s)e^{-isy}  \,ds  \\
     &= \widehat{G_{x}}(y).
   \end{align*}
   Define the exponents $p,\mu$ via 
   $$
     \frac{1}{p}:=\frac{1}{q'}+\beta,\qquad 
     \frac{1}{\mu} :=\frac{1}{p}-\frac{1}{p'}.
   $$
   Our assumptions on $\beta$ imply $1\leq p\leq 2$ and $1\leq \mu\leq r$.
   From $\alpha\geq \frac{1}{q}$ and Proposition~\ref{prop:Pitttype}  we infer   
   \begin{align*}
     \|\mathcal R^*_{\alpha,\beta}F\|_{L^{q,\infty}(\R;L^{q,\infty}(\R))}
     &=  \Big\| (1+|x|)^{-\alpha} \cdot \| (1+|y|)^{-\beta}\hat G_{x}(y)\|_{L_y^{q,\infty}(\R)}
     \Big\|_{L_x^{q,\infty}(\R)}   \\
     &\les 
      \| (1+|x|)^{-\alpha}\|_{L_x^{q,\infty}(\R)} \cdot
     \sup_{x\in\R}  \| (1+|y|)^{-\beta}\hat G_{x}(y)\|_{L_y^{q,\infty}(\R)} \\
     &\les\sup_{x\in\R}   \|G_{x}\|_{L^p(\R)}  \\
     &\simeq  \|F_\star(-\arcsin(\cdot)) (1-(\cdot)^2)^{-\frac{1}{2}} \|_{L^p([-1,1])}   \\
     &\simeq    \|F_\star |\cos|^{\frac{1-p}{p}}  \|_{L^p([-\frac{\pi}{2},\frac{\pi}{2}])}           \\
     &\les   \|F_\star\|_{L^{\mu,p}([-\frac{\pi}{2},{\frac{\pi}{2}}])}
     \| |\cos|^{\frac{1-p}{p}} \|_{L^{\frac{p}{p-1},\infty}([0,{\frac{\pi}{2}}])}  \\
     &\les   \|F_\star\|_{L^{r,1}([-\frac{\pi}{2},{\frac{\pi}{2}}])} \\
     &\les   \|F\|_{L^{r,1}(\S)}.
   \end{align*}
   Here the third last estimate is justified if and only if $1\leq \mu<\infty$, which is automatically
   satisfied in the case $r<\infty$. So this proves (i).
   In the case $r=\infty$, an analogous estimate holds with 
   $\|F\|_{L^\infty(\S)}$ on the right hand side whenever $1\leq \mu<\infty$, which is exactly the
   case if $2\beta>\frac{2}{q}-1=\frac{2}{q}-\frac{1}{r'}$ is assumed. This gives (ii).
  \end{proof}

  We shall partially upgrade these estimates via interpolation. Before doing this,
  we extend the range of restricted weak-type estimates to parameters $\alpha\in (0,\frac{1}{q})$. This is
  done via Stein interpolation in the setting of quasi-Banach spaces as presented earlier. 
  In contrast to the previous result the range   will be $L^{q,\infty}(\R^2)$ instead of $L^{q,\infty}(\R;L^{q,\infty}(\R))$.

  \begin{lem}\label{lem:weaktypebounds_smallalpha}
      Assume $0\leq \beta\leq \alpha<\frac{1}{q}$ and $1\leq r\leq\infty, 0<q<\infty$. 
      \begin{itemize}
        \item[(i)] If $r<\infty$ then $\mathcal R_{\alpha,\beta}^*:L^{r,1}(\sph{1})\to
        L^{q,\infty}(\R^2)$ is bounded if  $\alpha+\beta>\frac{2}{q}-\frac{1}{2},\,\alpha+2\beta\geq
        \frac{3}{q}-\frac{1}{r'}$.
        \item[(ii)] If $r=\infty$ then $\mathcal R_{\alpha,\beta}^*:L^r(\sph{1})\to
        L^{q,\infty}(\R^2)$ is bounded if  $\alpha+\beta>\frac{2}{q}-\frac{1}{2},\,\alpha+2\beta>
        \frac{3}{q}-\frac{1}{r'}$.
      \end{itemize}
  \end{lem}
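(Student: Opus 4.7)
The approach is Stein interpolation (Theorem~\ref{thm:GrafMas}) applied to the analytic family
$$
  T_z F(x,y) := (1+|x|)^{-z\alpha_1}(1+|y|)^{-z\beta_1}(\mathcal R^* F)(x,y), \qquad z\in\ov S,
$$
for auxiliary parameters $\alpha_1,\beta_1>0$ to be chosen. Then $|T_{it}F|=|\mathcal R^* F|$ is unweighted on the left boundary of the strip, $|T_{1+it}F|=|\mathcal R^*_{\alpha_1,\beta_1}F|$ is fully weighted on the right boundary, and $|T_\theta F|=|\mathcal R^*_{\theta\alpha_1,\theta\beta_1}F|$ recovers the target operator $\mathcal R^*_{\alpha,\beta}$ at the interior point $\theta\in(0,1)$ provided $\theta\alpha_1=\alpha$ and $\theta\beta_1=\beta$.

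On $\Re z=0$ we invoke the Fefferman--Zygmund theorem~\ref{thm:FeffermanZygmund}, giving a uniform bound $T_{it}:L^{r_0,1}(\S)\to L^{q_0}(\R^2)$ for every $(q_0,r_0)$ with $q_0>4$ and $q_0\geq 3r_0'$. On $\Re z=1$ we invoke Lemma~\ref{lem:weaktypebounds_largealpha} with saturated parameters $\alpha_1=1/q_1$ and $2\beta_1=2/q_1-1/r_1'$, giving a uniform bound $T_{1+it}:L^{r_1,1}(\S)\to L^{q_1,\infty}(\R;L^{q_1,\infty}(\R))$. Theorem~\ref{thm:GrafMas} then produces, for each $\theta\in(0,1)$,
$$
  \|\mathcal R^*_{\alpha,\beta}F\|_{Y_0^{1-\theta}Y_1^\theta} \les \|F\|_{[L^{r_0,1},L^{r_1,1}]_\theta}
$$
with $Y_0=L^{q_0}(\R^2)$ and $Y_1=L^{q_1,\infty}(\R;L^{q_1,\infty}(\R))$. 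For $q_0\neq q_1$, Proposition~\ref{prop:embedding}(iii) embeds the Calderón product on the left into $L^{q,\infty}(\R^2)$ with $1/q=(1-\theta)/q_0+\theta/q_1$, while the standard chain $(L^{r_0,1},L^{r_1,1})_{\theta,1}\subset[L^{r_0,1},L^{r_1,1}]_\theta$ together with the real-interpolation identity for Lorentz spaces (valid when $r_0\neq r_1$) places $L^{r,1}(\S)$ inside the domain space with $1/r=(1-\theta)/r_0+\theta/r_1$. This yields the asserted $L^{r,1}(\S)\to L^{q,\infty}(\R^2)$ bound.

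The hard part is the parameter bookkeeping. Solving the four matching equations with the saturations above yields the explicit formulas $q_0=(1-\theta)/(1/q-\alpha)$ and $r_0=(1-\theta)/(1/r-\theta+2(\alpha-\beta))$, so the strict Fefferman--Zygmund condition $q_0>4$ becomes $\theta<1-4(1/q-\alpha)$. A short calculation shows that the window for $\theta$ in $(0,1)$ is nonempty precisely when the assumption $\alpha+\beta>\tfrac{2}{q}-\tfrac{1}{2}$ holds, while the Knapp hypothesis $\alpha+2\beta\geq 3/q-1/r'$ together with $\alpha<1/q$ ensures $r_0\in[1,\infty]$ and the secondary Fefferman--Zygmund condition $q_0\geq 3r_0'$; the nondegeneracy conditions $q_0\neq q_1$ and $r_0\neq r_1$ are achieved by varying $\theta$ inside this window. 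Part~(ii), covering $r=\infty$, is handled identically but with Lemma~\ref{lem:weaktypebounds_largealpha}(ii) at the right endpoint, whose strict inequality $2\beta_1>2/q_1-1/r_1'$ propagates to the strict Knapp assumption $\alpha+2\beta>3/q-1/r'$ of the conclusion.
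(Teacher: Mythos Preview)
Your proof is correct and follows essentially the same route as the paper: Stein interpolation (Theorem~\ref{thm:GrafMas}) between the unweighted Fefferman--Zygmund bound at $\Re z=0$ and Lemma~\ref{lem:weaktypebounds_largealpha} at $\Re z=1$, then Proposition~\ref{prop:embedding}(iii) to land in $L^{q,\infty}(\R^2)$. The paper differs only organizationally: it does not saturate $\beta_1$ (allowing $\beta/\theta\ge 1/q_1-1/(2r_1')$), it outsources the parameter bookkeeping to Proposition~\ref{prop:InterpolationArithI} in the appendix, and for part~(ii) it simply invokes the embedding $L^\infty(\S)\subset L^{\tilde r}(\S)$ to reduce to part~(i) rather than rerunning the interpolation with $r_1=\infty$.
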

  \begin{proof}
    We interpolate the estimates from Lemma~\ref{lem:weaktypebounds_largealpha} with the unweighted ones from
    Theorem~\ref{thm:FeffermanZygmund} and concentrate on the case $r<\infty$ in (i). We show in the Appendix 
    (Proposition~\ref{prop:InterpolationArithI})
    that our assumptions on $\alpha,\beta,r,q$ allow to find $r_0\in [1,\infty],r_1\in [1,\infty), q_0\in
    [1,\infty],q_1\in (0,\infty)$ and $\theta\in (0,1)$ such that
   \begin{align*}
     &\frac{1}{q}=\frac{1-\theta}{q_0}+\frac{\theta}{q_1},\quad
     \frac{1}{r}=\frac{1-\theta}{r_0}+\frac{\theta}{r_1}\qquad \text{and} \\
     &\frac{1}{q_0} \leq \frac{1}{3r_0'},\quad\frac{1}{q_0}<\frac{1}{4},\quad
     \frac{\alpha}{\theta} = \frac{1}{q_1},\quad
     \frac{\beta}{\theta}\geq \frac{1}{q_1}-\frac{1}{2r_1'},\quad q_0\neq q_1.
   \end{align*}
   Then the family of linear operators
   $\mathcal E_s:= \mathcal R_{s\alpha/\theta,s\beta/\theta}^*$ has the following properties:
   \begin{itemize}
     \item[(a)]  $\mathcal E_s: L^{r_0}(\S)\to L^{q_0}(\R^2)$ is bounded for
     $\Re(s)=0$. \\
     This follows from Theorem~\ref{thm:FeffermanZygmund} and $\frac{1}{q_0}\leq \frac{1}{3r_0'},
     \frac{1}{q_0}<\frac{1}{4}$.
     \item[(b)] $\mathcal E_s: L^{r_1,1}(\S)\to L^{q_1,\infty}_x(\R;L^{q_1,\infty}_y(\R)) 
     $ is bounded for $\Re(s)=1$. \\
     This follows from Lemma~\ref{lem:weaktypebounds_largealpha}~(i) and $\frac{\alpha}{\theta}=
     \frac{1}{q_1},\; \frac{\beta}{\theta}\geq \frac{1}{q_1}-\frac{1}{2r_1'}$.
   \end{itemize}
   Stein interpolation (Theorem~\ref{thm:GrafMas} for  $\mathcal X:= L^\infty(\mathbb{S}^1)$)
   then implies the boundedness of $$
     \mathcal E_s: [L^{r_0}(\S),L^{r_1,1}(\S)]_\theta \to
     L^{q_0}(\R^2)^{1-\theta}\big(L^{q_1,\infty}(\R;L^{q_1,\infty}(\R))\big)^\theta
     \quad\text{whenever }\Re(s)=\theta.
   $$   
   Plugging in $s=\theta$ and using the embeddings   
   $$
     L^{r,1}(\S)\subset [L^{r_0}(\S),L^{r_1,1}(\S)]_\theta
     \quad\text{and}\quad   
     L^{q,\infty}(\R^2)\supset  L^{q_0}(\R^2)^{1-\theta}\big(L^{q_1,\infty}(\R;L^{q_1,\infty}(\R))\big)^\theta
   $$ 
   from Proposition~\ref{prop:embedding} (note $q_0\neq q_1$) 
   we deduce the boundedness of $\mathcal R_{\alpha,\beta}^* = \mathcal E_\theta: L^{r,1}(\S)\to
   L^{q,\infty}(\R^2)$ for the claimed range of exponents $q,r$. Part (ii) follows from (i) and
   $L^\infty(\S)\subset L^{\tilde r}(\S)$ for all $\tilde r\in [1,\infty)$.

  \end{proof}
  
  \begin{cor}\label{cor:mappingproperties}
    Assume $0\leq \beta\leq \alpha<\infty$ and $1<r,q<\infty$. Then
    $\mathcal R_{\alpha,\beta}^*:L^{r,s}(\sph{1})\to L^{q,s}(\R^2)$ is bounded for all $s\in [1,\infty]$ if
    one of the following conditions holds:
    \begin{itemize}
      \item[(i)] $\alpha>\frac{1}{q}$ and $2\beta\geq \frac{2}{q}-\frac{1}{r'}$,
      \item[(ii)] $\alpha=\frac{1}{q}$ and  $2\beta> \frac{2}{q}-\frac{1}{r'}$,
      \item[(iii)] $\alpha<\frac{1}{q}$ and $\alpha+\beta>\frac{2}{q}-\frac{1}{2}$ as well as
       $\alpha+2\beta\geq \frac{3}{q}-\frac{1}{r'}$.
    \end{itemize}
  \end{cor}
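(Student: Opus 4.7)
The approach is to apply the Marcinkiewicz-type real interpolation theorem for Lorentz spaces to two restricted weak-type bounds at nearby exponents, using Proposition~\ref{prop:embedding} to identify the interpolated target. Concretely, given $(\alpha,\beta,r,q)$ satisfying one of (i)--(iii), I would select two pairs $(r_0,q_0),(r_1,q_1)$ with $r_0\neq r_1$ and $q_0\neq q_1$, together with $\theta\in(0,1)$ such that $\frac{1}{r}=\frac{1-\theta}{r_0}+\frac{\theta}{r_1}$ and $\frac{1}{q}=\frac{1-\theta}{q_0}+\frac{\theta}{q_1}$. At each endpoint the appropriate lemma yields $\mathcal R^*_{\alpha,\beta}:L^{r_j,1}(\S)\to Z_j$. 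Real interpolation on the domain gives $(L^{r_0,1},L^{r_1,1})_{\theta,s}=L^{r,s}(\S)$ via \cite[Theorem~5.3.1]{BL}, and combined with Proposition~\ref{prop:embedding}(i) or (ii) on the target, this produces $\mathcal R^*_{\alpha,\beta}:L^{r,s}(\S)\to L^{q,s}(\R^2)$ for every $s\in[1,\infty]$.

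For case (i) the strict inequality $\alpha>\frac{1}{q}$ provides two-sided slack in $q_j$. I interpolate along the line $\frac{2}{q_j}-\frac{1}{r_j'}=2\beta$ through $(1/r,1/q)$, so that the condition $2\beta\geq\frac{2}{q_j}-\frac{1}{r_j'}$ of Lemma~\ref{lem:weaktypebounds_largealpha}(i) holds with equality at both endpoints while $\alpha\geq\frac{1}{q_j}$ persists by continuity. Both targets $Z_j=L^{q_j,\infty}(\R;L^{q_j,\infty}(\R))$ are mixed Lorentz spaces, and Proposition~\ref{prop:embedding}(i) identifies their real interpolation with $L^{q,s}(\R^2)$. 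Case (iii) is analogous: I interpolate along $\frac{3}{q_j}-\frac{1}{r_j'}=\alpha+2\beta$ through $(1/r,1/q)$; the strict inequalities $\alpha<\frac{1}{q}$ and $\alpha+\beta>\frac{2}{q}-\frac{1}{2}$ persist in a two-sided neighborhood, so Lemma~\ref{lem:weaktypebounds_smallalpha}(i) applies at both endpoints with targets $L^{q_j,\infty}(\R^2)$, which interpolate to $L^{q,s}(\R^2)$ by the classical Lorentz interpolation theorem \cite[Theorem~5.3.2]{BL}.

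Case (ii) is the crux, since the equality $\alpha=\frac{1}{q}$ forbids using Lemma~\ref{lem:weaktypebounds_largealpha}(i) at any $q_j<q$. I would couple an endpoint $(r_0,q_0)$ with $q_0>q$ taken from Lemma~\ref{lem:weaktypebounds_largealpha}(i) with an endpoint $(r_1,q_1)$ with $q_1<q$ taken from Lemma~\ref{lem:weaktypebounds_smallalpha}(i). The strict hypothesis $2\beta>\frac{2}{q}-\frac{1}{r'}$ implies $\alpha+2\beta>\frac{3}{q}-\frac{1}{r'}$ and, using $r<\infty$ (i.e.\ $r'>1$), also $\alpha+\beta>\frac{2}{q}-\frac{1}{2r'}>\frac{2}{q}-\frac{1}{2}$, so Lemma~\ref{lem:weaktypebounds_smallalpha}(i) applies strictly at $(r,q)$ and hence in a whole neighborhood. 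The main obstacle is that the two targets $Z_0=L^{q_0,\infty}(\R;L^{q_0,\infty}(\R))$ and $Z_1=L^{q_1,\infty}(\R^2)$ are of different type; to identify their real interpolation with $L^{q,s}(\R^2)$ I would either upgrade the second endpoint to a strong $L^{q_1}(\R^2)$ bound via a preliminary Stein interpolation mirroring the proof of Lemma~\ref{lem:weaktypebounds_smallalpha}, so that Proposition~\ref{prop:embedding}(ii) applies directly, or embed $Z_1$ into a mixed Lorentz space at slightly perturbed exponents so that Proposition~\ref{prop:embedding}(i) suffices.
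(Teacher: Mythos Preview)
Your handling of cases (i) and (iii) matches the paper's proof: perturb $(1/r,1/q)$ along the line where the critical condition holds with equality, apply Lemma~\ref{lem:weaktypebounds_largealpha} (for (i)) respectively Lemma~\ref{lem:weaktypebounds_smallalpha} (for (iii)) at both endpoints, and interpolate using Proposition~\ref{prop:embedding}. This is correct.

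For case (ii), however, you are making life much harder than necessary. The paper simply deduces (ii) from (iii) by monotonicity of the weight: if $\alpha=\frac{1}{q}$ and $2\beta>\frac{2}{q}-\frac{1}{r'}$, then (as you yourself observe) $\alpha+2\beta>\frac{3}{q}-\frac{1}{r'}$ and $\alpha+\beta>\frac{2}{q}-\frac{1}{2}$ hold \emph{strictly}. Hence one may replace $(\alpha,\beta)$ by a slightly smaller pair $(\alpha',\beta')$ with $\beta'\leq\alpha'<\frac{1}{q}$ for which the hypotheses of (iii) still hold at the \emph{same} $(r,q)$. Case (iii) then gives boundedness of $\mathcal R^*_{\alpha',\beta'}:L^{r,s}(\S)\to L^{q,s}(\R^2)$, and the pointwise inequality $|\mathcal R^*_{\alpha,\beta}F|\leq|\mathcal R^*_{\alpha',\beta'}F|$ transfers this to $(\alpha,\beta)$. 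No heterogeneous interpolation is needed.

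Your proposed workarounds for (ii) are also not fully justified as written. Option~(b) is problematic: in general $L^{q_1,\infty}(\R^2)$ does not embed into a mixed space $L^{q_1',\infty}(\R;L^{q_1',\infty}(\R))$, so ``perturbing exponents'' does not obviously produce the right containment. Option~(a) can be made to work, but the cleanest way to obtain a strong $L^{q_1}(\R^2)$ bound at the second endpoint is precisely to invoke the already-proved case (iii) at $(r_1,q_1)$; once you notice that (iii) is available in a full neighbourhood of $(r,q)$, the monotonicity argument above is the shorter route.
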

  \begin{proof}
    We first consider the range $\alpha>\frac{1}{q}$ where it suffices to prove
    the endpoint estimate where $2\beta= \frac{2}{q}-\frac{1}{r'}$. In view of $1<r,q<\infty$
    we may choose $\eps>0$ sufficiently small and define $q_0,q_1,r_0,r_1\in (1,\infty)$ via 
    $$
      \frac{1}{q_0}:=\frac{1}{q}-\eps,\quad
      \frac{1}{q_1}:=\frac{1}{q}+\eps,\quad
      \frac{1}{r_0}:=\frac{1}{r}+2\eps,\quad
      \frac{1}{r_1}:=\frac{1}{r}-2\eps.
    $$ 
    We then have $\alpha>\frac{1}{q_i}$ and $2\beta= \frac{2}{q_i}-\frac{1}{r_i'}$ for  $i=0,1$, so
    Lemma~\ref{lem:weaktypebounds_largealpha} implies that $\mathcal R_{\alpha,\beta}^*:L^{r_i,1}(\sph{1})\to
    L^{q_i,\infty}(\R;L^{q_i,\infty}(\R))$ is bounded. Hence, real interpolation yields
    the boundedness of 
    $$
      \mathcal R_{\alpha,\beta}^*:
      \big(L^{r_0,1}(\sph{1}),L^{r_1,1}(\sph{1})\big)_{\theta,s} 
      \to \big(L^{q_0,\infty}(\R;L^{q_0,\infty}(\R)),L^{q_1,\infty}(\R;L^{q_1,\infty}(\R))\big)_{\theta,s}
    $$ 
    for all $s\in [0,1]$. Since $q_0>q>q_1,r_0<r<r_1$ and
    $\frac{1}{q}=\frac{1-\theta}{q_0}+\frac{\theta}{q_1},\frac{1}{r}=\frac{1-\theta}{r_0}+\frac{\theta}{r_1}$
    for $\theta:=\frac{1}{2}$, real interpolation \cite[Theorem~3.11.8]{BL} 
    and Proposition~\ref{prop:embedding}~(i) give the boundedness of
     $\mathcal R_{\alpha,\beta}^*: L^{r,s}(\sph{1}) \to L^{q,s}(\R^2)$ for all $s\in [1,\infty]$, which
    implies claim (i).
    
    \medskip
    
   In order to prove (iii) assume $0<\alpha<\frac{1}{q}$ and choose $q_0,q_1,r_0,r_1\in (1,\infty)$ such that
   $$
      \frac{1}{q_0}:=\frac{1}{q}-\eps,\quad
      \frac{1}{q_1}:=\frac{1}{q}+\eps,\quad
      \frac{1}{r_0}:=\frac{1}{r}+3\eps,\quad
      \frac{1}{r_1}:=\frac{1}{r}-3\eps
    $$ 
   for some sufficiently small $\eps>0$. Then one may check $0<\alpha<\frac{1}{q_i}$, 
   $\alpha+\beta>\frac{2}{q_i}-\frac{1}{2}$ as well as
   $\alpha+2\beta\geq  \frac{3}{q_i}-\frac{1}{r_i'}$. As above, real interpolation yields the boundedness of
   $\mathcal R_{\alpha,\beta}^*:  L^{r,s}(\sph{1})\to L^{q,s}(\R^2)$ for all $s\in [1,\infty]$. 
   The case (ii) may be deduced from (iii).     
  \end{proof}
  
  \medskip

  \textbf{Proof of Theorem~\ref{thm:main} -- sufficient conditions:}
   We prove the sufficiency part of the Theorem assuming $0<q<\infty$ and w.l.o.g. $\alpha\geq
   \beta\geq 0$. So want have to show that $\mathcal R_{\alpha,\beta}^*:L^r(\mathbb{S}^1)\to
   L^q(\R^2)$ is bounded for $q,r$ as stated in the theorem.
   
   \smallskip \textit{Endpoint estimates (iii),(iv):} Claim (iii) is about $\alpha>\frac{1}{q},
   2\beta=\frac{2}{q}-\frac{1}{r'}$ and $1<r\leq q$. Here, the bound results from
    Corollary~\ref{cor:mappingproperties}~(i) thanks to the embeddings $L^r(\sph{1})\subset
    L^{r,s}(\sph{1})$ and $L^{q,s}(\R^2)\subset L^q(\R^2)$ where $s\in [r,q]$.  
    Analogously, Corollary~\ref{cor:mappingproperties}~(iii) yields 
    the endpoint estimate  (iv) dealing with the case
    $\alpha<\frac{1}{q},\alpha+2\beta=\frac{3}{q}-\frac{1}{r'}$ and $1<r\leq q$.
    
    \smallskip  \textit{Nonendpoint estimates (i),(ii):} The nonendpoint
    estimates concern the parameter region   
    $$
      \alpha\geq \frac{1}{q},\;2\beta>\frac{2}{q}-\frac{1}{r'}
      \quad\text{or}\quad
      \alpha< \frac{1}{q},\;\alpha+2\beta>\frac{3}{q}-\frac{1}{r'}.
     $$  
    In the case $1<r\leq \infty$, the boundedness of $R^*_{\alpha,\beta}:L^r(\mathbb{S}^1)\to
    L^q(\R^2)$ is a consequence of Corollary~\ref{cor:mappingproperties}. Indeed, in that case we can
    find a sufficiently small $\eps>0$ such that the Corollary implies the boundedness 
    from $L^{r_\eps,q}(\mathbb{S}^1)$ to $L^q(\R^2)$ where $\frac{1}{r_\eps} := \frac{1}{r}+\eps$. 
    So the embedding $L^{r}(\mathbb{S}^1)\hookrightarrow L^{r_\eps,q}(\mathbb{S}^1)$ yields the claim.  
    So it remains to deal with the nonendpoint estimates for  $r=1$. Since (ii) is void, we have to prove the
    boundedness for $\alpha\geq \frac{1}{q},\beta>\frac{1}{q}$. 
    Lemma~\ref{lem:weaktypebounds_largealpha}~(i) implies the boundedness $L^1(\mathbb{S}^1)\to
    L^{q-\eps,\infty}(\R;L^{q-\eps,\infty}(\R))$ for sufficiently small $\eps>0$. On the other hand, the
    boundedness $L^1(\mathbb{S}^1)\to L^\infty(\R^2)$ is trivial. So interpolation and
    Proposition~\ref{prop:embedding}~(ii) implies the boundedness  of
    $R^*_{\alpha,\beta}: L^1(\mathbb{S}^1)\to L^{q}(\R^2)$, which is all we had to show.
%
     \qed

\section{Proof of Theorem~\ref{thm:main} -- Necessary conditions} \label{sec:Counterexamples}

  We now show that our result is sharp in the scale of Lebesgue spaces. This is mainly achieved by a
  detailed analysis of the constant density $F\equiv 1$, which turns out to be responsible for the necessary
  condition $\alpha+\beta>\frac{2}{q}-\frac{1}{2}$, and Knapp-type counterexamples.  
  In the following we shall always assume at least 
  $$
    0\leq\beta\leq \alpha<\infty, \qquad
    1\leq r\leq \infty, \qquad
    0<q<\infty.
  $$   
  The necessity of the  conditions in Theorem~\ref{thm:main} follows from four Lemmas
  in this section. We shall prove:
   \begin{itemize}
     \item[(i)] The condition $\alpha+\beta>\frac{2}{q}-\frac{1}{2}$ is necessary by Lemma~\ref{lem:NecI}.
     \item[(ii)] Endpoint estimates for $\alpha>\frac{1}{q}$ can only hold for
     $q\geq r>1$ by Lemma~\ref{lem:NecII} and \ref{lem:NecIII}.
     \item[(iii)]  Endpoint estimates for $\alpha=\frac{1}{q}$ are
     impossible by Lemma~\ref{lem:NecII}. 
     \item[(iv)] Endpoint estimates for $\alpha<\frac{1}{q}$  
     can only hold for $q\geq r$ by Lemma~\ref{lem:NecIV}.       
   \end{itemize}
   From (ii)-(iv) we infer that endpoint estimates can only hold if
   $1<r\leq q$ and $\alpha\neq \frac{1}{q}$. Note that   
   the endpoint estimates for $\alpha<\frac{1}{q}$ do not occur for $r=1$.
   Combining this with Remark~\ref{rem:general}(a) we obtain the necessity
   of all sufficient conditions stated in Theorem~\ref{thm:main}.

  \medskip

  The first counterexample deals with the constant density on the unit sphere. The Fourier transform of the
  surface measure  of the unit sphere in $\R^2$ is given by
  $\widehat{\sigma}(x) = c J_0(|x|)$ where $c$ is some absolute constant and 
  $J_0$ denotes a Bessel function of the second kind.
  The properties of such Bessel functions are well-known; we shall only need the lower bound
  \begin{equation} \label{eq:estJ0}
    |J_0(r)| \gtrsim \sum_{j=1}^\infty j^{-\frac{1}{2}} \ind_{[z_j-\delta,z_j+\delta]}(r)
  \end{equation}
  where $\delta>0$ is some small fixed number and $\{z_j:j\in\N\}$ denotes the set of positive local extrema
  of $J_0$. It is known that $c\,j\leq z_j\leq C\, j$ and that $z_j-z_{j-1}$ converges to a positive constant
  as $j\to\infty$. So $\delta>0$ can be assumed so small that the intervals $[z_j-\delta,z_j+\delta]$ are
  mutually disjoint.

    \begin{lem}  \label{lem:NecI}
      Assume $0\leq\beta\leq \alpha<\infty, 1\leq r\leq \infty,0<q<\infty$. If the operator $\mathcal
      R^*_{\alpha,\beta}:L^r(\sph{1})\to L^q(\R^2)$  is bounded, then $\alpha+\beta>\frac{2}{q}-\frac{1}{2}$.
    \end{lem}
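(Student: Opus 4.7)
I would test boundedness of $\mathcal R^*_{\alpha,\beta}$ against the constant density $F\equiv 1$ on $\S$. Since $\|F\|_{L^r(\S)}<\infty$ for every $r\in[1,\infty]$, boundedness would force $\mathcal R^*_{\alpha,\beta} F\in L^q(\R^2)$. But $\mathcal R^* F$ is a constant multiple of the Fourier transform of the surface measure on $\S$, so $(\mathcal R^* F)(x,y)=cJ_0(|(x,y)|)$. The strategy is to combine the oscillatory lower bound \eqref{eq:estJ0} with a weighted volume computation on a well-chosen annular sector to produce a divergent series whenever $\alpha+\beta\leq\frac{2}{q}-\frac{1}{2}$.

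\textbf{Execution.} After inserting \eqref{eq:estJ0} and using that the intervals $[z_j-\delta,z_j+\delta]$ are pairwise disjoint, the $L^q$ norm to the $q$-th power is bounded below by
\begin{equation*}
  \|\mathcal R^*_{\alpha,\beta}F\|_{L^q(\R^2)}^q
  \gtrsim \sum_{j\geq 1} j^{-q/2}\int_{A_j}(1+|x|)^{-\alpha q}(1+|y|)^{-\beta q}\,dx\,dy,
\end{equation*}
where $A_j:=\{(x,y)\in\R^2:\,|(x,y)|\in[z_j-\delta,z_j+\delta]\}$ is an annulus of radius $z_j\sim j$ and width $2\delta$. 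Rather than computing this integral exactly, I would restrict to the diagonal subset $D_j:=A_j\cap\{z_j/4\leq|x|,|y|\leq z_j\}$, which is a fixed positive angular fraction of $A_j$ and hence has two-dimensional Lebesgue measure $\sim j$; on $D_j$ both weights satisfy $(1+|x|)\sim(1+|y|)\sim j$. This immediately gives the inner integral $\gtrsim j^{1-(\alpha+\beta)q}$, and summing yields
\begin{equation*}
  \|\mathcal R^*_{\alpha,\beta}F\|_{L^q(\R^2)}^q \gtrsim \sum_{j\geq 1} j^{1-q/2-(\alpha+\beta)q},
\end{equation*}
which diverges precisely when $1-q/2-(\alpha+\beta)q\geq -1$, i.e.\ $\alpha+\beta\leq\frac{2}{q}-\frac{1}{2}$. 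Contradiction proves the claim.

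\textbf{Expected obstacle.} The argument is largely routine, so no genuine obstacle arises. The only strategic pitfall would be to try to compute the full angular integral over $A_j$: near the coordinate axes one of the weights is of order $1$ rather than $j^{-\alpha}$ or $j^{-\beta}$, and whether those contributions dominate depends on whether $\alpha q$ and $\beta q$ exceed $1$, producing several case distinctions. Restricting to the diagonal sector $D_j$ where both $|x|$ and $|y|$ are comparable to $j$ bypasses all of this and extracts exactly the correct asymptotic decay, matching the remark that the condition $\alpha+\beta>\frac{2}{q}-\frac{1}{2}$ originates from the constant density on $\S$.
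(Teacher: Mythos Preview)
Your proposal is correct and follows essentially the same approach as the paper's proof: test against the constant density $F\equiv 1$, invoke the lower bound \eqref{eq:estJ0} for $J_0$, and restrict each annulus $A_j$ to a subregion where both $|x|$ and $|y|$ are comparable to $z_j\sim j$, so that the weighted integral contributes $\gtrsim j^{1-(\alpha+\beta)q}$. The only cosmetic difference is that the paper slices $A_j$ by fixing $z_j/4\leq |x|\leq z_j/2$ and integrating in $y$, whereas you take the diagonal angular sector $D_j$ directly; both choices extract the same asymptotic and lead to the identical divergent series.
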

    \begin{proof}
      For $F:= 1\in L^r(\sph{1})$ we have
    \begin{align*}
      \|\mathcal R_{\alpha,\beta}^*F\|_{L^q(\R^2)}^q
      &= \| J_0(|(x,y)|)(1+|x|)^{-\alpha} (1+|y|)^{-\beta}\|_{L^q(\R^2)}^q \\
      &\stackrel{\eqref{eq:estJ0}}\gtrsim \sum_{j=1}^\infty j^{-\frac{q}{2}}\| (1+|x|)^{-\alpha}
      (1+|y|)^{-\beta} \ind_{[z_j-\delta,z_j+\delta]}(|(x,y)|)\|_{L^q(\R^2)}^q  \\
      &\gtrsim
      \sum_{j=1}^\infty j^{-\frac{q}{2}} \int_{z_j/4}^{z_j/2}  (1+|x|)^{-q\alpha}
      \left(\int_{\sqrt{(z_j-\delta)^2-|x|^2}}^{\sqrt{(z_j+\delta)^2-|x|^2}} (1+|y|)^{-q\beta} \,dy \right)
      \,dx \\
      &\gtrsim
      \sum_{j=1}^\infty j^{-\frac{q}{2}} \int_{z_j/4}^{z_j/2}  (1+|x|)^{-q\alpha}
      \cdot (1+|z_j|)^{-q\beta}  
      \left(\int_{\sqrt{(z_j-\delta)^2-|x|^2}}^{\sqrt{(z_j+\delta)^2-|x|^2}} 1 \,dy \right)
        \,dx \\
      &\gtrsim
      \sum_{j=1}^\infty j^{-\frac{q}{2}} \int_{z_j/4}^{z_j/2}  (1+|x|)^{-q\alpha}
      \cdot \delta (1+|z_j|)^{-q\beta}  \,dx \\
      &\gtrsim
      \sum_{j=1}^\infty j^{-\frac{q}{2}}  z_j (1+|z_j|)^{-q\alpha-q\beta}    \\
      &\gtrsim
      \sum_{j=1}^\infty j^{1-q(\frac{1}{2}+\alpha+\beta)}.
    \end{align*}
    So $\mathcal R_{\alpha,\beta}^*F\in L^q(\R^2)$ implies $1-q(\frac{1}{2}+\alpha+\beta)<-1$, which is
    equivalent to $\alpha+\beta>\frac{2}{q}-\frac{1}{2}$.
 \end{proof}
 
 Next we discuss the Knapp example following the ideas from~\cite{ManDOS}.   

 \begin{lem}  \label{lem:NecII}
   Assume  $0\leq\beta\leq \alpha<\infty, 1\leq r\leq \infty,0<q<\infty$.  Then the following conditions are
   necessary for $\mathcal R^*_{\alpha,\beta}:L^r(\S)\to L^q(\R^2)$ to be bounded:
   \begin{itemize}
     \item[(i)] if $\alpha>\frac{1}{q}$, then $2\beta\geq \frac{2}{q}-\frac{1}{r'},r>1$ or
     $2\beta>\frac{2}{q},r=1$, 
     \item[(ii)] if $\alpha=\frac{1}{q}$, then $2\beta>\frac{2}{q}-\frac{1}{r'}$,
     \item[(iii)] if $\alpha<\frac{1}{q}$, then $\alpha+2\beta\geq \frac{3}{q}-\frac{1}{r'}$.  
   \end{itemize}   
 \end{lem}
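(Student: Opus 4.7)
The plan is to apply one Knapp-type counterexample and let the three regimes of a pair of one-dimensional weight integrals produce the three cases of the lemma. For small $\delta>0$ I set $F_\delta:=\ind_{C_\delta}$ where $C_\delta\subset\sph{1}$ is the cap of angular half-width $\delta$ about the north pole $(0,1)$, so that $\|F_\delta\|_{L^r(\sph{1})}\simeq \delta^{1/r}$. A Taylor expansion of the phase $(x,y)\cdot(\cos\phi,\sin\phi)$ about $\phi=\pi/2$ gives $|\mathcal R^*F_\delta(x,y)|\gtrsim \delta$ on the tube $T_\delta:=[-c\delta^{-1},c\delta^{-1}]\times[-c\delta^{-2},c\delta^{-2}]$ for some small $c>0$. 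Crucially, the long direction of $T_\delta$ is $y$, on which the smaller weight $(1+|y|)^{-\beta}$ acts under the convention $\alpha\geq\beta$; this is what allows the sharp conditions to surface.

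Inserting this lower bound into the assumed boundedness estimate yields
\begin{equation*}
  \delta^q \int_0^{c\delta^{-1}}(1+x)^{-q\alpha}\,dx\int_0^{c\delta^{-2}}(1+y)^{-q\beta}\,dy
  \les \delta^{q/r}\quad\text{for all small }\delta>0.
\end{equation*}
Each one-dimensional integral has three regimes (polynomial blow-up for $q\alpha<1$, logarithmic for $q\alpha=1$, bounded for $q\alpha>1$; analogously in $\beta$), and this trichotomy produces the case distinctions. In case (iii) the assumption $\beta\leq\alpha<1/q$ puts both integrals in the polynomial regime, so the inequality becomes $\delta^{1-3/q+\alpha+2\beta}\les\delta^{1/r}$, which forces $\alpha+2\beta\geq 3/q-1/r'$ as $\delta\to 0$. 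In case (i) with $r>1$ the only nontrivial sub-case is $\beta<1/q$; then the $x$-integral is bounded, the $y$-integral contributes $\delta^{-2(1-q\beta)}$, and letting $\delta\to 0$ gives $2\beta\geq 2/q-1/r'$. For $\beta\geq 1/q$ the assertion in (i) is automatic.

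The strict inequalities in (ii) and in (i) for $r=1$ come from logarithmic boundary behaviour. In (ii) the factor $\log(1/\delta)$ produced by $q\alpha=1$ obstructs the equality case: $\delta^{1-2/q+2\beta}(\log 1/\delta)^{1/q}\les\delta^{1/r}$ as $\delta\to 0$ forces $2\beta>2/q-1/r'$. In (i) for $r=1$ the sub-case $\beta<1/q$ is excluded by the polynomial balance (which would otherwise require $\beta\geq 1/q$), and $\beta=1/q$ is excluded by a logarithm in the $y$-integral, leaving $\beta>1/q$, i.e., $2\beta>2/q$. I expect the main subtlety to lie in the choice of cap orientation: centering at $(1,0)$ instead would put the long tube direction under the larger weight $\alpha$ and produce strictly weaker necessary conditions involving $2\alpha+\beta$ or $\alpha$ alone in place of the claimed ones; only the orientation chosen above, together with the three weight regimes and the two logarithmic obstructions, reproduces (i)-(iii) simultaneously.
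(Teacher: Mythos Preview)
Your proof is correct and follows essentially the same Knapp-type strategy as the paper. The only difference is in how the lower-bound region is described: the paper proves $|\mathcal R^*F_\delta|\gtrsim\delta$ on a union of disjoint boxes $E_j=\{|x|\le c\delta^{-1}\}\times\{y\approx 2\pi j\}$ for $j=1,\dots,\lfloor c_0\delta^{-2}\rfloor$ (using the $2\pi$-periodicity to bound $\cos(\text{phase}-2\pi j)$ directly, without Taylor expansion), whereas you use the full dual tube $T_\delta$. Since $\sum_{j=1}^{\lfloor c_0\delta^{-2}\rfloor} j^{-q\beta}$ and $\int_0^{c\delta^{-2}}(1+y)^{-q\beta}\,dy$ have identical asymptotics in each of the three regimes $q\beta\gtrless 1$, the two computations produce the same trichotomy and the same necessary conditions.
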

 \begin{proof}
   For $\delta>0$ consider  $F:= \ind_{\mathcal C_\delta}$ where $\mathcal C_\delta := \{
   \xi=(\xi_1,\xi_2)\in\S:
   |\xi_1|<\delta\}$ denotes a spherical cap. Then, for $1\leq r<\infty$, 
   $$
     \|F\|_{L^r(\S)}
     = \left(\int_{\mathcal C_\delta} \,d\sigma(\xi)\right)^{\frac{1}{r}} 
     = \left(\int_0^{2\pi} \ind_{|\sin(\phi)|<\delta} \,d\phi\right)^{\frac{1}{r}}
     \simeq \delta^{\frac{1}{r}} 
   $$
   and the same bound holds for $r=\infty$.
   To estimate $\|\mathcal R^*_{\alpha,\beta}F\|_{L^q(\R^2)}$ from below, define
   $$
     E_j:= \left\{(x,y)\in\R^2 : 0<|x|\leq c\delta^{-1}, \frac{2\pi j- c}{\sqrt{1-\delta^2}}\leq y\leq 2\pi
     j+c\right\} 
   $$ 
   for some small enough constant $c>0$, say $c:=\frac{\pi}{8}$. We then have for $(x,y)\in E_j$,
   \begin{align*}
     |(\mathcal R^*F)(x,y)|  
     &= \left| \int_{\mathcal C_\delta} e^{i\xi\cdot(x,y)}\,d\sigma(\xi) \right| \\
     &= \left| \int_0^{2\pi} \ind_{|\sin(\phi)|<\delta} e^{i(\sin(\phi) x + \cos(\phi)y-2\pi j)}\,d\phi
     \right|
     \\
     &\geq   \int_0^{2\pi} \ind_{|\sin(\phi)|<\delta} \cos\big( \sin(\phi) x + \cos(\phi)y-2\pi j  
     \big)\,d\phi
   \end{align*}
   The argument of cosine has absolute value at most $\frac{\pi}{4}$ by choice of $c>0$ and $(x,y)\in E_j$.
   Indeed,
   \begin{align*}
     \sin(\phi) x + \cos(\phi)y-2\pi j 
     &\geq - \delta|x| + \sqrt{1-\delta^2}y - 2\pi j  
     \geq  - \delta\cdot c\delta^{-1} -c 
     = -2c 
     = - \frac{\pi}{4}, \\ 
     \sin(\phi) x + \cos(\phi)y-2\pi j
     &\leq  \delta |x| + y-2\pi j 
     \leq   \delta\cdot c\delta^{-1}+c 
     = 2c
     = \frac{\pi}{4}.
   \end{align*}
   So we obtain
   \begin{align*}
     |(\mathcal R^*F)(x,y)|  
     \geq   \int_0^{2\pi} \ind_{|\sin(\phi)|<\delta} \cos\big(\frac{\pi}{4}\big)\,d\phi 
     \gtrsim \delta \quad\text{for }(x,y)\in E_j, j\in\N.
   \end{align*} 
   
   Moreover, the sets $E_j$ are mutually disjoint and satisfy
   $$
     2\pi j+c - \frac{2\pi j-c}{\sqrt{1-\delta^2}}\geq c 
     \qquad\text{for } j=1,\ldots,\lfloor c_0\delta^{-2}\rfloor
   $$ 
   provided that $c_0>0$ is small enough. This implies  
   \begin{align*}
     \frac{\|\mathcal R^*_{\alpha,\beta}F\|_{L^q(\R^2)}}{\|F\|_{L^r(\S)}}
     &\gtrsim \delta^{-\frac{1}{r}} \| (\mathcal R^*F)(x,y) (1+|x|)^{-\alpha}(1+|y|)^{-\beta}\|_{L^q(\R^2)} \\
     &\gtrsim \delta^{-\frac{1}{r}}\cdot  \left(\sum_{j=1}^{\lfloor c_0\delta^{-2}\rfloor} \| \ind_{E_j}
     \delta (1+|x|)^{-\alpha}(1+|y|)^{-\beta}\|_{L^q(\R^2)}^q \right)^{\frac{1}{q}} \\ 
     &\simeq \delta^{-\frac{1}{r}} \left(\sum_{j=1}^{\lfloor c_0 \delta^{-2}\rfloor} \delta^q 
     \int_0^{c\delta^{-1}}  (1+|x|)^{-\alpha q}\,dx \int_{\frac{2\pi j-c}{\sqrt{1-\delta^2}}}^{2\pi j+c}
     (1+|y|)^{-\beta q}\,dy  \right)^{\frac{1}{q}} \\
     &\simeq \delta^{\frac{1}{r'}} \left(
     \int_0^{c\delta^{-1}}  (1+|x|)^{-\alpha q}  \,dx    \cdot
     \sum_{j=1}^{\lfloor c_0\delta^{-2}\rfloor} j^{-\beta q} 
     \right)^{\frac{1}{q}} \\
     &\gtrsim  \delta^{\frac{1}{r'}} \left( 
     \begin{cases}
       1 &, \alpha >\frac{1}{q}\\
       |\log(\delta)| &, \alpha = \frac{1}{q}\\
       \delta^{-1+\alpha q} &, \alpha<\frac{1}{q}
     \end{cases}
     \cdot
       \begin{cases}
       1 &, \beta >\frac{1}{q}\\
       |\log(\delta)| &, \beta = \frac{1}{q}\\
       \delta^{-2+2\beta q} &, \beta<\frac{1}{q}
     \end{cases}       
     \right)^{\frac{1}{q}}.    
   \end{align*}
   So a case distinction gives the result. 
 \end{proof}

 We start with proving the necessity of $1<r\leq q$ in the case of endpoint estimates in the parameter
 range $\alpha>\frac{1}{q}$.
 
 \begin{lem}   \label{lem:NecIII}
   Assume $0\leq\beta\leq \alpha<\infty$ and $0<q<\infty,1<r\leq \infty$ where $\alpha>\frac{1}{q}$,
   $\beta=\frac{1}{q}-\frac{1}{2r'}$.  Then $\mathcal R^*_{\alpha,\beta}:L^r(\S)\to L^q(\R^2)$ can only be
   bounded if $q\geq r$.
 \end{lem}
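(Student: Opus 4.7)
The plan is to argue by contradiction via a Knapp-type counterexample. Assume $\mathcal R^*_{\alpha,\beta}: L^r(\S)\to L^q(\R^2)$ is bounded with $r>q$, $\alpha>\frac{1}{q}$ and $\beta=\frac{1}{q}-\frac{1}{2r'}$; I aim to produce test functions $F_N$ on $\S$ with uniformly controlled $L^r$-norm but for which $\|\mathcal R^*_{\alpha,\beta}F_N\|_{L^q(\R^2)}\to\infty$, contradicting boundedness.

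Since the single Knapp cap $\ind_{C_\delta}$ already saturates the endpoint condition (cf.\ the Knapp computation inside the proof of Lemma~\ref{lem:NecII}), the natural strategy is to superpose $N$ copies $\{\ind_{C_{\delta,\theta_k}}\}_{k=1}^N$ arranged so that their extensions localize on essentially disjoint tubes $T_k\subset\R^2$, each contributing the same weighted $L^q$-mass as the $\theta=0$ Knapp case. If such an arrangement succeeds, then disjointness on $\S$ gives
\[
\|F_N\|_{L^r(\S)}\sim(N\delta)^{1/r},
\]
while disjointness of tubes and the single-cap bound yield
\[
\|\mathcal R^*_{\alpha,\beta}F_N\|_{L^q(\R^2)}^q\gtrsim N\cdot\delta^{q-2+2q\beta}.
\]
Taking the ratio and invoking the endpoint identity $2\beta=\frac{2}{q}-\frac{1}{r'}$ cancels every power of $\delta$, and one is left with
\[
\frac{\|\mathcal R^*_{\alpha,\beta}F_N\|_{L^q(\R^2)}}{\|F_N\|_{L^r(\S)}}
\gtrsim N^{\frac{1}{q}-\frac{1}{r}}\xrightarrow[N\to\infty]{}\infty
\]
since $r>q$, contradicting the assumed boundedness.

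The main difficulty is that the weight $(1+|x|)^{-\alpha}(1+|y|)^{-\beta}$ is not rotation-invariant, so placing the caps at distinct angles $\theta_k\in\S$ changes the weighted $L^q$-mass of each individual tube; a cap at generic $\theta_k$ does not reproduce the $\theta=0$ Knapp scaling. To bypass this I would keep a single Knapp cap located near a pole of $\S$ and instead apply the equivariance identity from Remark~\ref{rem:general}(c), replacing disjoint caps by modulated copies $e^{ih_k\cdot\omega}\ind_{C_\delta}$. Their extensions are translates of a single Knapp tube, which retain the $\theta=0$ scaling provided the translations $h_k$ are chosen so that the shifted tubes lie in a region of comparable weight while being mutually well separated. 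Ensuring simultaneously that $\|\sum_k e^{ih_k\cdot\omega}\ind_{C_\delta}\|_{L^r(\S)}$ grows like $N^{1/r}\delta^{1/r}$ (without collapsing under oscillatory cancellation) and that the shifted tubes remain both disjoint and in a uniform weight regime is the technical heart of the proof.
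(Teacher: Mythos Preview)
Your plan has a genuine gap at exactly the point you flag as ``the technical heart''. Once you switch from disjoint caps to modulated copies $e^{ih_k\cdot\omega}\ind_{C_\delta}$, all summands share the \emph{same} support $C_\delta$, so the claim $\|F_N\|_{L^r(\S)}\sim (N\delta)^{1/r}$ is no longer the trivial disjointness count. Generically, exponential sums exhibit square-root cancellation: for well-separated $h_k$ one has $\bigl\|\sum_k e^{ih_k\cdot\omega}\bigr\|_{L^2(C_\delta)}\sim (N\delta)^{1/2}$, and for $r>2$ the typical size $|\sum_k e^{ih_k\cdot\omega}|\sim N^{1/2}$ yields $\|F_N\|_{L^r}\sim N^{1/2}\delta^{1/r}$, not $N^{1/r}\delta^{1/r}$. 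With that scaling your ratio becomes $N^{1/q-1/2}$, which blows up only when $q<2$, so the argument cannot cover the full range $q<r$. There is a second obstruction you did not address: because $\alpha>\tfrac{1}{q}$, the $x$-weight $(1+|x|)^{-\alpha q}$ is integrable, so translating tubes in $x$ gains nothing (the total $x$-mass over all translates is bounded), while translating in $y$ makes each tube's contribution decay like $k^{-\beta q}$ rather than stay constant. Neither translation direction gives you $N$ equal contributions.

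The paper avoids all of this by passing to the dual inequality $\|\hat f\|_{L^{r'}(\S)}\lesssim \|(1+|x|)^{\alpha}(1+|y|)^{\beta}f\|_{L^{q'}(\R^2)}$ (after reducing to $q>1$ via monotonicity) and testing against a \emph{single} function $f_\eps(x,y)=\chi(x)\,(1+|y|)^{-\beta-\frac{1+\eps}{q'}}e^{iy}$ that is already spread out in $y$. Its weighted $L^{q'}$ norm is $\lesssim\eps^{-1/q'}$, while an oscillatory integral computation near the north pole gives $\|\hat f_\eps\|_{L^{r'}(\S)}\gtrsim\eps^{-1/r'}$; comparing powers of $\eps$ forces $q'\leq r'$. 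The power-law profile is the correct replacement for your superposition of translates: it encodes the ``sum over scales'' in one shot and sidesteps both the $L^r$-norm issue and the weight anisotropy.
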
 
 \begin{proof}
   By the monotonicity considerations from Remark~\ref{rem:general}~(a) with respect to $q$, it
   suffices to derive the necessity of $q\geq r$ assuming a priori $q> 1$.      
   We actually prove the equivalent statement that the  dual bound 
   $$
      \|\hat f\|_{L^{r'}(\S)}  \lesssim \|(1+|x|)^{\alpha}(1+|y|)^\beta f\|_{L^{q'}(\R^2)}
  $$
  can only hold for $q\geq r$. We consider functions of the form
  $$
    f_\eps(x,y):=\chi(x)(1+|y|)^{-\beta-\frac{1+\eps}{q'}} e^{iy}
  $$ 
  where $\eps>0$ is small and $\chi$ is a nontrivial nonnegative Schwartz function on $\R$. Then 
  \begin{align*}
     \|(1+|x|)^{\alpha}(1+|y|)^\beta f_\eps\|_{L^{q'}(\R^2)}
     \les  \left( \int_0^\infty (1+y)^{-1-\eps}  \,dy \right)^{\frac{1}{q'}} 
     \les \eps^{-\frac{1}{q'}}.
  \end{align*}
  Since $r>1$, the exponents 
  $\beta+\frac{1+\eps}{q'}= 1-\frac{1}{2r'}+\frac{\eps}{q'}$ are contained in
  $(0,1)$ provided that $\eps>0$ is sufficiently small. So Proposition~\ref{prop:oscint} in the Appendix
  applies and yields as $(\xi_1,\xi_2)\to (0,1)$    
   \begin{align*} 
     |\hat f_\eps(\xi)|
     &= \left| \int_{\R} \chi(x)e^{-i\xi_1\cdot x}\,dx
     \cdot \int_\R (1+|y|)^{-1+\frac{1}{2r'}-\frac{\eps}{q'}}  e^{-i(\xi_2-1)\cdot y}\,dy \right| \\
     &= \left| \int_{\R} \chi(x)e^{-i\xi_1\cdot x}\,dx \right|
     \cdot 2\left|\int_0^\infty (1+r)^{-1+\frac{1}{2r'}-\frac{\eps}{q'}}  \cos((\xi_2-1)r)\,dr \right| 
     \\
     &\gtrsim  \left(\int_\R \chi(x)\,dx \right) |\xi_2-1|^{-\frac{1}{2r'}+\frac{\eps}{q'}} \\
     &\gtrsim  |\xi_2-1|^{-\frac{1}{2r'}+\frac{\eps}{q'}}
   \end{align*}
   uniformly with respect to small $\eps>0$. So the assumptions implies, in view of $r'<\infty$, for some
   small enough $c>0$
   \begin{align*}
     \eps^{-\frac{1}{q'}}
     &\gtrsim \|\hat f_\eps\|_{L^{r'}(\S)}   
     \;=\; \left( \int_0^{2\pi} |\hat f_\eps(\sin(t),\cos(t))|^{r'}\,dt \right)^{\frac{1}{r'}}\\
     &\gtrsim \left( \int_0^{c}  
     |\cos(t)-1|^{r'(-\frac{1}{2r'}+\frac{\eps}{q'})}  \,dt \right)^{\frac{1}{r'}} 
     \;\gtrsim\; \left( \int_0^{c} t^{-1+\frac{2r'\eps}{q'}} \,dt \right)^{\frac{1}{r'}} \\  
     &\simeq  \eps^{-\frac{1}{r'}}. 
   \end{align*}
   From this we infer $r'\geq q'$, hence $q\geq r$. 
 \end{proof}
 
 Next we extend this result to the range $\alpha<\frac{1}{q}$ and prove the necessity of $q\geq r$ in the
 special case $q=2$.
 
 \begin{lem} \label{lem:OptimalL2Result} 
   Assume  $0\leq \beta\leq \alpha<\frac{1}{2},1\leq r\leq \infty$ and 
   $\alpha+\beta>\frac{1}{2}$ as well as $\alpha+2\beta = \frac{3}{2}-\frac{1}{r'}$. Then
   $\mathcal R^*_{\alpha,\beta}:L^r(\S)\to L^2(\R^2)$ can only be bounded if $2\geq r$.
 \end{lem}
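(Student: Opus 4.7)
Following the dual strategy of Lemma~\ref{lem:NecIII} but with a power-law test function reflecting the critical scaling at $q=2$, fix a small $c_1>0$ and for $\eps>0$ small set
\[
F_\eps(\xi) := |\phi|^{-1/r+\eps}\ind_{|\phi|\le c_1}(\phi),\qquad \xi=(\sin\phi,\cos\phi).
\]
A direct computation gives $\|F_\eps\|_{L^r(\S)}^r = 2\int_0^{c_1}\phi^{-1+r\eps}\,d\phi \simeq \eps^{-1}$, so $\|F_\eps\|_{L^r(\S)} \simeq \eps^{-1/r}$. The plan is to show $\|\mathcal R^*_{\alpha,\beta}F_\eps\|_{L^2(\R^2)}^2 \ges \eps^{-1}$; the resulting ratio $\eps^{1/r-1/2}$ then blows up as $\eps\to 0$ for $r>2$, contradicting the assumed boundedness.

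Setting $w^2(x,y) := (1+|x|)^{-2\alpha}(1+|y|)^{-2\beta}$, the $T^*T$ identity yields
\[
\|\mathcal R^*_{\alpha,\beta}F_\eps\|_{L^2}^2 = 2\pi \int_\S\int_\S F_\eps(\omega)\overline{F_\eps(\omega')}\,\widehat{w^2}(\omega'-\omega)\,d\sigma(\omega)\,d\sigma(\omega').
\]
Since $w^2$ is a tensor product, Proposition~\ref{prop:oscint} applied to each factor (valid since $2\alpha,2\beta\in(0,1)$) gives the positive pointwise bound $\widehat{w^2}(\eta)\ges |\eta_1|^{2\alpha-1}|\eta_2|^{2\beta-1}$ for $\eta$ in some fixed neighborhood of the origin. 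Choosing $c_1$ small enough brings $\omega-\omega'$ into this neighborhood on the support of $F_\eps\otimes F_\eps$. Using the expansions $\omega_1-\omega_1'\simeq \phi-\phi'$ and $\omega_2-\omega_2'\simeq -(\phi-\phi')(\phi+\phi')/2$ for $\phi,\phi'\in[0,c_1]$, and writing $a := 1/r-\eps$, I arrive at
\[
\|\mathcal R^*_{\alpha,\beta}F_\eps\|_{L^2}^2 \ges \int_0^{c_1}\int_0^{c_1}\phi^{-a}\phi'^{-a}\,|\phi-\phi'|^{2\alpha+2\beta-2}(\phi+\phi')^{2\beta-1}\,d\phi\,d\phi'.
\]

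The change of variables $\phi=\rho$, $\phi'=\rho t$ factorises the double integral. The angular integral
\[
\int_0^\infty t^{-a}|1-t|^{2\alpha+2\beta-2}(1+t)^{2\beta-1}\,dt
\]
is finite: at $t=0$ by $a<1$, at $t=1$ by $\alpha+\beta>1/2$, and at $t=\infty$ by $a > 1-2/r'$ (automatic when $r>2$ since $1-2/r'<0$). The radial exponent $-2a+2\alpha+4\beta-2$ collapses to $-1+2\eps$ upon inserting $\alpha+2\beta=3/2-1/r'$ and $a=1/r-\eps$, so $\int_0^{c_1}\rho^{-1+2\eps}\,d\rho\simeq\eps^{-1}$. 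Combining yields $\|\mathcal R^*_{\alpha,\beta}F_\eps\|_{L^2}\ges\eps^{-1/2}$, and thus the ratio $\eps^{1/r-1/2}\to\infty$ for $r>2$, forcing $r\le 2$. The most delicate point is ensuring that Proposition~\ref{prop:oscint} delivers a genuine pointwise lower bound on $\widehat{w^2}$ (not just an asymptotic equivalence) throughout the integration region; this fixes $c_1$ in terms of $\alpha,\beta$ but does not affect the scaling in $\eps$.
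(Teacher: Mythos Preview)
Your proof is correct and follows essentially the same route as the paper's: the same power-law test function $\phi^{-(1/r-\eps)}$ concentrated near the north pole, the same expansion of the $L^2$-norm as a double integral over $\S\times\S$, the same application of Proposition~\ref{prop:oscint} to each of the two one-dimensional oscillatory integrals, and the same homogeneous substitution $\phi'=t\phi$ leading to a radial integral $\int_0^{c_1}\rho^{-1+2\eps}\,d\rho\simeq\eps^{-1}$. The only cosmetic differences are that the paper restricts from the outset to $0\leq\varphi\leq\phi$ and uses the cruder bound $\cos\varphi-\cos\phi\gtrsim\phi(\phi-\varphi)$ (yielding $\phi^{2\beta-1}$ rather than your $(\phi+\phi')^{2\beta-1}$), and that it presents the computation directly rather than through the $T^*T$ kernel $\widehat{w^2}$.

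One small remark: your discussion of convergence of the angular integral at $t=\infty$ is unnecessary, since for a lower bound you may freely restrict to $t\in[0,1]$ (this is in fact forced by $\phi'\le c_1$), which also removes the need for the side condition ``automatic when $r>2$''. Restricting to $t\in[0,1]$ makes the argument cleaner and valid uniformly for all $r\in(1,\infty)$.
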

 \begin{proof}
   First we observe that the assumptions imply $1<r<\infty$. Indeed, if $r=1$, then $3\alpha\geq
   \alpha+2\beta=\frac{3}{2}>3\alpha$ and if $r=\infty$, then $\alpha+2\beta\geq
   \alpha+\beta>\frac{1}{2}=\alpha+2\beta$. Both statements are absurd, so $1<r<\infty$. Moreover, we
   necessarily have $\beta>0$ in view of $\alpha<\frac{1}{2}<\alpha+\beta$.  To prove the claim 
   set 
   $$
     F_\delta(\sin\phi,\cos\phi):= \phi^{-\mu} \ind_{[0,\delta]}(\phi)\quad\text{for } \phi\in [0,2\pi]
   $$
   and $\mu\in (0,\frac{1}{r})$ that we will send to $\frac{1}{r}<1$ from below. The parameter $\delta>0$ will be chosen
   sufficiently small but fixed. Then 
   $$
     \|F_\delta\|_{L^r(\S)}^2
      \simeq \left(\int_0^\delta \phi^{-\mu r}\,d\phi\right)^{\frac{2}{r}} 
      = \left( \frac{1}{1-\mu r} \delta^{1-\mu r}\right)^{\frac{2}{r}}
      \simeq (1-\mu r)^{-\frac{2}{r}}.
   $$  
   On the other hand, we have  
   \begin{align*}
     &\;\|\mathcal R_{\alpha,\beta}^* F_\delta\|_{L^2(\R^2)}^2 \\
     &= \int_\R \int_\R  (1+|x|)^{-2\alpha} (1+|y|)^{-2\beta} 
     \left| \int_{\S} F_\delta(\xi) e^{i\xi\cdot
     (x,y)}\,\,d\sigma(\xi) \right|^2 \,dy\,dx \\
     &=  \int_0^\delta \int_0^\delta \phi^{-\mu}\varphi^{-\mu} 
     \left(\int_\R (1+|x|)^{-2\alpha} e^{i\lambda_{\phi,\varphi}x} \,dx\right)
     \left(\int_\R (1+|y|)^{-2\beta} e^{-i\mu_{\phi,\varphi}y} \,dy\right)
     \,d\varphi \,d\phi\\      
     &=  4\int_0^\delta \int_0^\delta \phi^{-\mu}\varphi^{-\mu} 
     \left(\int_0^\infty (1+r)^{-2\alpha} \cos(\lambda_{\phi,\varphi}r) \,dr\right)
     \left(\int_0^\infty (1+r)^{-2\beta} \cos(\mu_{\phi,\varphi}r) \,dr\right)
     \,d\varphi \,d\phi\\       
     &=  8\int_0^\delta \phi^{-\mu} \int_0^\phi \varphi^{-\mu} 
      \left(\int_0^\infty (1+r)^{-2\alpha} \cos(\lambda_{\phi,\varphi}r) \,dr\right)
     \left(\int_0^\infty (1+r)^{-2\beta} \cos(\mu_{\phi,\varphi}r) \,dr\right)
     \,d\varphi \,d\phi
   \end{align*}
   where $\lambda_{\phi,\varphi} = \sin(\phi)-\sin(\varphi)$ and $\mu_{\phi,\varphi} =
   \cos(\varphi)-\cos(\phi)$. We observe  $\lambda_{\phi,\varphi},\mu_{\phi,\varphi}\to 0^+$
   where $0\leq \varphi\leq \phi\leq \delta \to 0^+$. Hence,
   for small $\delta>0$ we obtain from Proposition~\ref{prop:oscint}, in view of $0<2\beta\leq 2\alpha<1$,    
   \begin{align*}
       \|\mathcal R_{\alpha,\beta}^* F_\delta\|_{L^2(\R^2)}^2 
     &\gtrsim  \int_0^\delta  \phi^{-\mu} \int_0^\phi \varphi^{-\mu}  
     \lambda_{\phi,\varphi}^{2\alpha-1}
     \mu_{\phi,\varphi}^{2\beta-1} \,d\varphi \,d\phi \\
     &=  \int_0^\delta \phi^{-\mu} \int_0^\phi \varphi^{-\mu} 
     (\sin(\phi)-\sin(\varphi))^{2\alpha-1}
     (\cos(\varphi)-\cos(\phi))^{2\beta-1} \,d\varphi \,d\phi. 
   \end{align*}
   From the Mean Value Theorem and $\alpha+2\beta=\frac{3}{2}-\frac{1}{r'}$ we get
   \begin{align*}
     \|\mathcal R_{\alpha,\beta}^* F_\delta\|_{L^2(\R^2)}^2
     &\gtrsim  \int_0^\delta \phi^{-\mu} \int_0^\phi \varphi^{-\mu} 
     (\phi-\varphi)^{2\alpha-1}
     (\phi(\phi-\varphi))^{2\beta-1}\,d\varphi \,d\phi \\
     &=  \int_0^\delta \phi^{2\beta-1-\mu} \int_0^\phi  
     (\phi-\varphi)^{2\alpha+2\beta-2}\varphi^{-\mu} \,d\varphi \,d\phi \\
     &=  \int_0^\delta \phi^{2\beta-1-\mu} \int_0^1 \phi  
     (\phi-s\phi)^{2\alpha+2\beta-2}(s\phi)^{-\mu} \,ds \,d\phi \\
     &=  \int_0^\delta \phi^{2\alpha+4\beta-2-2\mu}  \,d\phi \cdot 
     \int_0^1 (1-s)^{2\alpha+2\beta-2} s^{-\mu}\,ds \\
     &\gtrsim  \int_0^\delta \phi^{-1+\frac{2}{r}-2\mu}  \,d\phi \\
     &\simeq  (\frac{2}{r}-2\mu)^{-1}. 
   \end{align*}
   Here we used $\mu<\frac{1}{r}<1$. 
   Choose $\mu=\frac{1}{r}-\eps$ and send $\eps\to 0^+$.  Then
   \begin{align*}
     \frac{\|\mathcal R_{\alpha,\beta}^* F_\delta\|_{L^2(\R^2)}^2}{\|F_\delta\|_{L^r(\S)}^2} 
     \gtrsim \frac{(1-\mu r)^{2/r}}{\frac{2}{r}-2\mu}
     \gtrsim \frac{(\eps r)^{2/r}}{2\eps}
     \simeq \eps^{\frac{2}{r}-1},  
   \end{align*}
   so $r\leq 2$ is necessary for the boundedness of $\mathcal R_{\alpha,\beta}^*:L^r(\S)\to L^2(\R^2)$.
 \end{proof}
  
  We remark that the rather explicit expression for $\|\mathcal R_{\alpha,\beta}^* F\|_{L^2(\R^2)}$ from
  Lemma~\ref{lem:OptimalL2Result} may be used to give an alternative proof of $\|\mathcal
  R_{\alpha,\beta}^* F\|_{L^2(\R^2)}\les \|F\|_{L^r(\S)}$ under optimal conditions on $\alpha,\beta,r$. 
  In a very similar context this has been done in~\cite[Proposition~3.1]{ManDOS}. To conclude we now know 
  that $q\geq r$ is necessary for endpoint estimates in the special case $q=2$. We use this fact to derive the
  necessity of $q\geq r$ for all $q\in (0,\infty)$ using interpolation.

 \begin{lem} \label{lem:NecIV}
   Assume $0\leq \beta\leq \alpha<\frac{1}{q}$ and  $1<r\leq \infty,0<q\leq \infty$
   as well as $\alpha+\beta>\frac{2}{q}-\frac{1}{2}, \alpha+2\beta = \frac{3}{q}-\frac{1}{r'}$.  Then
   $\mathcal R^*_{\alpha,\beta}:L^r(\S)\to L^q(\R^2)$ can only be bounded if $q\geq r$. 
 \end{lem}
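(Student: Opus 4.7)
The plan is to argue by contradiction and reduce to Lemma~\ref{lem:OptimalL2Result} via Stein interpolation (Theorem~\ref{thm:GrafMas}) applied to an analytic family that varies $\alpha,\beta$ together with the exponents $r,q$. The case $q=2$ is literally Lemma~\ref{lem:OptimalL2Result}, so I assume $q\neq 2$. Suppose, for contradiction, that $\mathcal R^*_{\alpha,\beta}:L^r(\S)\to L^q(\R^2)$ is bounded with $r>q$ on the endpoint surface $\alpha+2\beta=3/q-1/r'$; on this surface the relation $r>q$ is equivalent to the strict inequality $q(1+\alpha+2\beta)<4$, which in particular forces $q<4$.

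I would introduce the analytic family $T_z = \mathcal R^*_{(1-z)\alpha_0+z\alpha,\,(1-z)\beta_0+z\beta}$ anchored at a point $(\alpha_0,\beta_0,r_0,q_0)$ lying on the endpoint surface, chosen so that $q_0$ lies on the opposite side of $2$ from $q$ and the bound $\mathcal R^*_{\alpha_0,\beta_0}:L^{r_0}\to L^{q_0}$ is already known. For $q<2$ I take $(\alpha_0,\beta_0)=(0,0)$ with $(r_0,q_0)$ on the Fefferman--Zygmund line $q_0=3r_0'$, $q_0>4$ close to $4$ (Theorem~\ref{thm:FeffermanZygmund}); for $q>2$ I take $r_0=q_0\in (8/5,2)$ close to $2$ and $\alpha_0=\beta_0=(4/q_0-1)/3$, which is covered by the already-established part~(iv) of Theorem~\ref{thm:main}. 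Since the weights $(1+|x|)^{-\alpha(z)}(1+|y|)^{-\beta(z)}$ are unimodular perturbations of the real-weighted operators on $\Re z\in\{0,1\}$, Theorem~\ref{thm:GrafMas} produces a bound $\mathcal R^*_{\tilde\alpha,\tilde\beta}:L^{r_\theta}\to L^{q_\theta}$ for every $\theta\in(0,1)$, where $(\tilde\alpha,\tilde\beta)=(1-\theta)(\alpha_0,\beta_0)+\theta(\alpha,\beta)$.

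I now choose $\theta$ so that $q_\theta=2$. A short linear identity, combining the two endpoint relations, shows that the endpoint surface is preserved under interpolation: $\tilde\alpha+2\tilde\beta=(1-\theta)(3/q_0-1/r_0')+\theta(3/q-1/r')=3/2-1/r_\theta'$. The key computation is that the strict assumption $r>q$ together with $q_0$ taken close enough to its boundary value ($4$ from above or $2$ from below) forces the strict inequality $\tilde\alpha+2\tilde\beta<1$, which is equivalent to $r_\theta>2$. The remaining hypotheses of Lemma~\ref{lem:OptimalL2Result}, namely $\tilde\beta\leq\tilde\alpha<1/2$ and $\tilde\alpha+\tilde\beta>1/2$, follow from convexity together with the original strict assumptions $\beta\leq\alpha<1/q$ and $\alpha+\beta>2/q-1/2$, once $q_0$ is sufficiently close to its critical value. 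Lemma~\ref{lem:OptimalL2Result} then forces $r_\theta\leq 2$, a contradiction. I expect the main obstacle to be precisely this parameter bookkeeping: verifying by explicit algebra that a single choice of anchor makes the interpolated datum satisfy every hypothesis of Lemma~\ref{lem:OptimalL2Result} strictly while violating its conclusion quantitatively, uniformly across both the $q<2$ and $q>2$ subcases.
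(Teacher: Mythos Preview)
Your argument is correct and follows the same strategy as the paper: reduce to the $q=2$ case (Lemma~\ref{lem:OptimalL2Result}) by Stein interpolation against an anchor on the opposite side of $q=2$, then derive a contradiction. The only difference is the choice of anchor in the subcase $q<2$: the paper uses the weighted endpoint $r_0=q_0\in(2,4)$ with $\alpha_0=\beta_0=\tfrac{1}{3}(\tfrac{4}{q_0}-1)$ (exactly as in your $q>2$ case), whereas you use the unweighted Fefferman--Zygmund endpoint $q_0=3r_0'>4$. Both work; the paper's symmetric choice has the advantage that $r_0=q_0$ forces $\tfrac{1}{r_\theta}-\tfrac{1}{q_\theta}=\theta(\tfrac{1}{r}-\tfrac{1}{q})<0$ for \emph{any} $q_0$ in the admissible range, while your unweighted anchor genuinely requires $q_0$ close to $4$ to make the positive contribution $(1-\theta)(\tfrac{1}{r_0}-\tfrac{1}{q_0})=(1-\theta)(1-\tfrac{4}{q_0})$ small enough. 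Your parameter bookkeeping for the remaining hypotheses $\tilde\alpha<\tfrac12$ and $\tilde\alpha+\tilde\beta>\tfrac12$ is correct as sketched.
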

 \begin{proof}
   The case $q=2$ is already covered by Lemma~\ref{lem:OptimalL2Result}. To cover $q\in (0,2)$ and $q\in
   (2,\infty)$ we argue by contradicition. Assume first the claim is false for some $q>2$, i.e., 
   that there are $2<q_1<r_1$ and $\mathcal R^*_{\alpha_1,\beta_1}$ is bounded with
   $0\leq \beta_1\leq \alpha_1<\frac{1}{q_1}$ satisfying 
   $\alpha_1+\beta_1>\frac{2}{q_1}-\frac{1}{2}$ and
   $\alpha_1+2\beta_1=\frac{3}{q_1}-\frac{1}{r_1'}$.    
   Then choose $r_0=q_0\in  (1,2)$ and $\alpha_0\geq \beta_0>0$ such that
   \begin{equation}\label{eq:conditions}
     0< \alpha_0<\frac{1}{q_0},\quad 
     \alpha_0+\beta_0>\frac{2}{q_0}-\frac{1}{2},\quad
     \alpha_0+2\beta_0=\frac{3}{q_0}-\frac{1}{r_0'}.
   \end{equation}
   For instance, $\alpha_0:=\beta_0:=\frac{1}{3}(\frac{4}{q_0}-1)$ and $r_0:=q_0\in (1,2)$. 
   Theorem~\ref{thm:main}~(iii)   then implies that $\mathcal
   R^*_{\alpha_\theta,\beta_\theta}:L^{r_\theta}(\S)\to L^{q_\theta}(\R^2)$ is bounded for $\theta\in\{0,1\}$ 
   and  Stein interpolation (Theorem~\ref{thm:GrafMas}) allows to deduce that the same is true for $\theta\in (0,1)$.
   Here, $$
     \alpha_\theta = (1-\theta)\alpha_0+\theta\alpha_1,\quad
     \beta_\theta = (1-\theta)\beta_0+\theta\beta_1,\quad
     \frac{1}{r_\theta} = \frac{1-\theta}{r_0}+\frac{\theta}{r_1},\quad
     \frac{1}{q_\theta} = \frac{1-\theta}{q_0}+\frac{\theta}{q_1}.
   $$  
   This, however, contradicts our optimal result about the case $q=2$. To see why choose $\theta\in (0,1)$
   such that $q_\theta=2$, which is possible due to $q_0<2<q_1$. Then one checks 
   $$
     0<\beta_\theta\leq\alpha_\theta<\frac{1}{2},\quad
     \alpha_\theta+\beta_\theta>\frac{1}{2},\quad
     \alpha_\theta+2\beta_\theta = \frac{3}{2}-\frac{1}{r_\theta'}
     \quad\text{and}\quad
     \frac{1}{r_\theta}<\frac{1}{q_\theta} = \frac{1}{2}.
   $$
   But Lemma~\ref{lem:OptimalL2Result} implies that 
   $\mathcal R^*_{\alpha_\theta,\beta_\theta}:L^{r_\theta}(\S)\to L^2(\R^2)=L^{q_\theta}(\R^2)$ is not 
   bounded under these conditions, so the assumption was false and the claim is proved for $q>2$.  
   
   \medskip 
   
   In order to cover the case $0<q<2$ one similarly assumes for contradiction that 
   the estimate holds for some exponents $r_1>q_1$ with $q_1\in (0,2)$ and $\alpha_1,\beta_1$ as above. 
   Then one chooses $r_0,q_0\in (2,\infty)$ and $\alpha_0,\beta_0$ satisfying~\eqref{eq:conditions}, say
   $\alpha_0:=\beta_0:=\frac{1}{3}(\frac{4}{q_0}-1)$ and $r_0:=q_0\in (2,4)$. The same interpolation
   argument allows to derive a contradiction. After all, the assumption was false and the claim is proved.
 \end{proof}

 \section{Proof of Theorem~\ref{thm:main2} -- Sufficient conditions}
  
 In this section we determine the mapping properties of $\mathfrak R^*_\gamma:L^r(\S)\to L^q(\R^2)$. 
 We introduce the Banach space $(Z_q,\|\cdot\|_{Z_q})$ as follows:
 \begin{align*}
   Z_q&:= L^{q,\infty}_x(\R;L^{q,\infty}_y(\R))+L^{q,\infty}_y(\R;L^{q,\infty}_x(\R)), \\
   \|z\|_{Z_q} &:=  \inf\left\{\|z_1\|_{L^{q,\infty}_x(\R;L^{q,\infty}_y(\R))}+
   \|z_2\|_{L^{q,\infty}_y(\R;L^{q,\infty}_x(\R))}: z=z_1+z_2 \in Z_q\right\}.
 \end{align*}
 The counterpart of Lemma~\ref{lem:weaktypebounds_largealpha} reads as follows. 
 
  \begin{lem}\label{lem:weaktypebounds_radial}
      Assume $0\leq\gamma<\infty$ and $0<q\leq \infty,1\leq r\leq \infty$. Then
       $\mathfrak R_{\gamma}^*:L^r(\sph{1})\to Z_q$ is bounded provided that 
        $$
          \gamma \geq \max\left\{ \frac{1}{q}, \frac{2}{q}-\frac{1}{r'},\frac{2}{q}-\frac{1}{2}\right\}.
        $$
  \end{lem}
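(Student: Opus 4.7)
The plan is to decompose $\mathfrak R^*_\gamma F = f_1 + f_2$ via the indicator $\mathbf 1_{|x|\geq|y|}$, aiming to bound $f_1$ in $L^{q,\infty}(\R;L^{q,\infty}(\R))$ (with the $x$-norm outermost) and $f_2$ in $L^{q,\infty}(\R;L^{q,\infty}(\R))$ (with the $y$-norm outermost), so that the sum will lie in $Z_q$ by definition. This is the natural way to exploit the sum-space structure of the target.

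For $f_1$ on $\{|x|\geq|y|\}$ I will use the pointwise comparability $(1+|x|+|y|)^{-\gamma}\asymp(1+|x|)^{-\gamma}$ together with the trivial inequality $(1+|x|)^{-\beta}\leq(1+|y|)^{-\beta}$ valid on $\{|y|\leq|x|\}$ for $\beta\geq 0$. This yields, for any decomposition $\gamma=\alpha+\beta$ with $\alpha,\beta\geq 0$, the majorisation
\[
|f_1(x,y)| \lesssim (1+|x|)^{-\alpha}(1+|y|)^{-\beta}\,|\mathcal R^* F(x,y)| = |\mathcal R^*_{\alpha,\beta} F(x,y)|,
\]
which reduces the estimate on $f_1$ to an application of Lemma~\ref{lem:weaktypebounds_largealpha} with a pair $(\alpha,\beta)$ chosen to satisfy $\alpha\geq\beta$, $\alpha\geq 1/q$ and $2\beta\geq 2/q-1/r'$. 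A short case distinction in the $(\gamma,1/q,1/r')$-plane, using all three parts of the hypothesis $\gamma\geq\max\{1/q,2/q-1/r',2/q-1/2\}$, then provides a feasible pair. The bound for $f_2$ will follow by the symmetric argument: parametrising the top and bottom semicircles over $x$ gives the representation $\mathcal R^* F(x,y)=\widehat{H_y}(x)$ with $\|H_y\|_{L^p}$ independent of $y$, so that the analogue of Lemma~\ref{lem:weaktypebounds_largealpha} with the roles of $x$ and $y$ interchanged produces the desired mixed-Lorentz bound.

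Since Lemma~\ref{lem:weaktypebounds_largealpha} outputs bounds for $L^{r,1}(\S)$-inputs, I will upgrade to the required $L^r(\S)$ by perturbing the exponents $(r,q)$ slightly and applying real interpolation on the mixed Lorentz spaces via Proposition~\ref{prop:embedding}, in the same spirit as Corollary~\ref{cor:mappingproperties}; summing the two interpolated bounds gives a map into $Z_q$.

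The main obstacle will be the case analysis guaranteeing a feasible $(\alpha,\beta)$ in every parameter regime. In particular, at the endpoint $\gamma=2/q-1/2$ with $r=\infty$, Lemma~\ref{lem:weaktypebounds_largealpha}(ii) imposes the strict inequality $2\beta>2/q-1$, which is incompatible with the non-strict condition sought here; in that corner I will instead establish the bound at $\gamma=2/q-1/2+\varepsilon$ for small $\varepsilon>0$ and interpolate with the trivial bound at $q=\infty$ (valid for all $\gamma\geq 0$ and any $r$) to cover the limit $\varepsilon\to 0^+$.
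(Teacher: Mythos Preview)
There is a genuine gap. Your pointwise majorisation $(1+|x|+|y|)^{-\gamma}\le(1+|x|)^{-\alpha}(1+|y|)^{-\beta}$ on $\{|x|\ge|y|\}$ forces you to split $\gamma=\alpha+\beta$, after which Lemma~\ref{lem:weaktypebounds_largealpha} requires $\alpha\ge\tfrac1q$ on the outer variable and $\beta\ge\tfrac1q-\tfrac1{2r'}$ on the inner one. Whenever $\tfrac1q>\tfrac1{2r'}$ this forces
\[
  \gamma\;\ge\;\frac{2}{q}-\frac{1}{2r'},
\]
which for every $1<r<\infty$ is \emph{strictly} stronger than the hypothesis $\gamma\ge\max\{\tfrac1q,\tfrac2q-\tfrac1{r'},\tfrac2q-\tfrac12\}$. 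Concretely, for $q=1$, $r=2$ the lemma claims boundedness at $\gamma=\tfrac32$, but your scheme admits no feasible pair $(\alpha,\beta)$ below $\gamma=\tfrac74$. The loss is intrinsic to the product-weight reduction, not to a suboptimal choice of $(\alpha,\beta)$, so the ``short case distinction'' you promise cannot succeed.

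The paper proceeds differently. It splits the \emph{input} $F=F_1+F_2$ by support on the circle (away from $(0,\pm1)$, respectively $(\pm1,0)$) rather than splitting the output by $\mathbf 1_{|x|\ge|y|}$. For $F_1$ one has $\mathcal R^*F_1(x,y)=\widehat{G_x}(y)$ with $G_x$ compactly supported and $\sup_x\|G_x\|_{L^p}\lesssim\|F\|_{L^p(\S)}$. The decisive step is the rescaling $H_x(z):=(1+|x|)^{-1}G_x((1+|x|)^{-1}z)$: after the substitution $z=(1+|x|)^{-1}y$ the radial weight becomes exactly $(1+|x|)^{-\gamma}(1+|z|)^{-\gamma}$, so the \emph{full} exponent $\gamma$ acts on both factors instead of being shared. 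Proposition~\ref{prop:Pitttype} with exponent $\gamma$ then handles the inner variable, and the resulting outer factor $(1+|x|)^{1/q-\gamma-1+1/p}$ lies in $L^{q,\infty}_x$ precisely under the three hypothesised inequalities. This also delivers an $L^r(\S)$ bound directly (via $L^r(\S)\hookrightarrow L^p(\S)$), so your proposed $L^{r,1}\to L^r$ upgrade by interpolation is not needed.
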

  \begin{proof}
    We first assume that $F:\S\to\R$ is supported away from the north and south pole, i.e.,
    $(0,1),(0,-1)\notin \supp(F)$. Then set $F_\star(\phi):=F(\cos(\phi),\sin(\phi))$. As in the
    proof of Lemma~\ref{lem:weaktypebounds_largealpha}, with identical notation, we have
   \begin{align*}
     (\mathcal R^*F)(x,y)
     = \int_{-1}^1 F_\star(\arcsin(s))(1-s^2)^{-\frac{1}{2}} e^{ix(1-s^2)^{\frac{1}{2}}} e^{isy} \,ds
      = \widehat{G_{x}}(y).
   \end{align*}
   Our assumption on the support of $F$ makes sure that the factor $(1-s^2)^{-1/2}$ may be treated as 
   a bounded function, set $H_x(z):= (1+|x|)^{-1}G_x( (1+|x|)^{-1}z)$. By assumption on $\gamma,q,r$ we may
   choose $p\in [1,\infty]$ such that  
   $$
     0\leq \frac{1}{p}-\frac{1}{q'}\leq \gamma,
     \quad \max\left\{\frac{1}{2},\frac{1}{r}\right\}\leq \frac{1}{p}\leq 1,
     \quad \frac{1}{q}-\gamma-1+\frac{1}{p}\leq -\frac{1}{q}.
   $$ 
   Then Proposition~\ref{prop:Pitttype} yields
   \begin{align*}
     \|\mathfrak R^*_{\gamma}F\|_{Z_q}
     &\leq \|\mathfrak R^*_{\gamma}F\|_{L^{q,\infty}_x(\R;L_y^{q,\infty}(\R))} \\
     &=  \Big\|  \| (1+|x|+|y|)^{-\gamma}\hat G_{x}(y)\|_{L_y^{q,\infty}(\R)}
     \Big\|_{L_x^{q,\infty}(\R)}   \\
     &=  \Big\|  \| (1+|x|+|y|)^{-\gamma}\hat H_{x}\big((1+|x|)^{-1} y\big)\|_{L_y^{q,\infty}(\R)}
     \Big\|_{L_x^{q,\infty}(\R)}   \\
     &=  \Big\|  (1+|x|)^{\frac{1}{q}-\gamma} \| (1+|z|)^{-\gamma}\hat H_{x}(z)\|_{L_z^{q,\infty}(\R)}
     \Big\|_{L_x^{q,\infty}(\R)}   \\
     &\les  \Big\|  (1+|x|)^{\frac{1}{q}-\gamma} \|H_x\|_{L^p(\R)} \Big\|_{L_x^{q,\infty}(\R)}   \\
     &\les    
     \Big\|  (1+|x|)^{\frac{1}{q}-\gamma-1+\frac{1}{p}}   \Big\|_{L_x^{q,\infty}(\R)}  
     \cdot \sup_{x\in\R} \|G_x\|_{L^p(\R)}  \\
     &\les   \|F_\star\|_{L^p([-1,1])}  \\
     &\les   \|F\|_{L^p(\S)} \\
     &\les   \|F\|_{L^r(\S)}.  
   \end{align*}
   The reasoning for $F\in L^r(\S)$ with $(1,0),(-1,0)\notin \supp(F)$ is analogous since it suffices to
   repeat the analysis with the roles of $x$ and $y$ interchanged. Since we can write
   $F=F_1+F_2$ and thus $\mathfrak R_\gamma^* F= \mathfrak R_\gamma^* F_1+\mathfrak R_\gamma^* F_2$    
   with $(0,\pm 1)\notin \supp(F_1),(\pm 1,0)\notin \supp(F_2)$, the claim follows.
  \end{proof}
  
  As in the proof of our first result, this weak-type estimate needs to be interpolated
  with the unweighted bound from Theorem~\ref{thm:FeffermanZygmund}. We use again    
  Theorem~\ref{thm:GrafMas} as well as the embedding of interpolation spaces 
  \begin{align}\label{eq:embedding_radial}
    \begin{aligned}
     &L^{q_0}(\R^2)^{1-\theta} (Z_{q_1})^\theta 
    \subset (L^{q_0}(\R^2),Z_{q_1})_{\theta,\infty}
    = L^{q_\theta,\infty}(\R^2) \\ 
    \text{where}\qquad&\frac{1-\theta}{q_0}+\frac{\theta}{q_1}=\frac{1}{q_\theta},\quad
    0<\theta<1,\quad  q_0\neq q_1 
   \end{aligned}
  \end{align}
  which is proved just like Proposition~\ref{prop:embedding}. We deduce the following:
  
  \begin{lem} \label{lem:weak_gamma}
     Let $\gamma>0$. Then the operator $\mathfrak R_\gamma^*: L^r(\S)\to  L^{q,\infty}(\R^2)$ is bounded  
    provided that $r\in [1,\infty],q\in (0,\infty]$ satisfy
  \begin{equation} \label{eq:weak_conditions_radial}
     \gamma\geq \max\left\{ \frac{3}{2q}-\frac{1}{2r'},\frac{2}{q}-\frac{1}{r'}\right\}
     \quad\text{and}\quad
     \gamma>  \frac{2}{q}-\frac{1}{2}.    
  \end{equation}
  \end{lem}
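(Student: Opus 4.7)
The plan is to run the Stein interpolation machinery of Theorem~\ref{thm:GrafMas} between the unweighted Fefferman-Zygmund estimate (Theorem~\ref{thm:FeffermanZygmund}) on the line $\Re s = 0$ and the weighted weak-type estimate of Lemma~\ref{lem:weaktypebounds_radial} on the line $\Re s = 1$, in direct analogy with the proof of Lemma~\ref{lem:weaktypebounds_smallalpha}.

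\emph{Step 1 (Parameter selection).} Given $(\gamma, r, q)$ as in \eqref{eq:weak_conditions_radial}, I claim one can produce $r_0, r_1 \in [1, \infty]$, $q_0 \in (4, \infty]$, $q_1 \in (0, \infty)$ with $q_0 \neq q_1$, and $\theta \in (0,1)$ satisfying
$$
  \frac{1}{r} = \frac{1-\theta}{r_0} + \frac{\theta}{r_1}, \qquad
  \frac{1}{q} = \frac{1-\theta}{q_0} + \frac{\theta}{q_1}, \qquad q_0 \geq 3 r_0',
$$
together with $\gamma/\theta \geq \max\{1/q_1,\, 2/q_1 - 1/r_1',\, 2/q_1 - 1/2\}$. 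The strict inequality $\gamma > 2/q - 1/2$ is what supplies the room needed to enforce $q_0 \neq q_1$, while the two remaining bounds for $\gamma$ match, after rescaling by $\theta$, the two nontrivial hypotheses of Lemma~\ref{lem:weaktypebounds_radial}. I would defer this case-heavy but elementary arithmetic to the appendix, in the same spirit as Proposition~\ref{prop:InterpolationArithI}.

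\emph{Steps 2--3 (Interpolation and embedding).} Consider the analytic family $\mathcal E_s := \mathfrak R^*_{s\gamma/\theta}$ acting on $\mathcal X := L^\infty(\S)$. On $\Re s = 0$ the weight $(1+|x|+|y|)^{-s\gamma/\theta}$ has modulus $1$, so Theorem~\ref{thm:FeffermanZygmund} yields $\|\mathcal E_s F\|_{L^{q_0}(\R^2)} \les \|F\|_{L^{r_0}(\S)}$ uniformly in $\Im s$; on $\Re s = 1$ the weight has modulus $(1+|x|+|y|)^{-\gamma/\theta}$, so Lemma~\ref{lem:weaktypebounds_radial} yields $\|\mathcal E_s F\|_{Z_{q_1}} \les \|F\|_{L^{r_1}(\S)}$ uniformly in $\Im s$. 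Applying Theorem~\ref{thm:GrafMas} at height $\Re s = \theta$ gives
$$
  \mathfrak R^*_\gamma = \mathcal E_\theta :\, [L^{r_0}(\S), L^{r_1}(\S)]_\theta \longrightarrow L^{q_0}(\R^2)^{1-\theta}(Z_{q_1})^\theta.
$$
The domain contains $L^r(\S)$ by the standard complex-interpolation identity for Lebesgue spaces \cite[Thm.~5.1.1]{BL}, and the target embeds into $L^{q,\infty}(\R^2)$ via \eqref{eq:embedding_radial} (this is where $q_0 \neq q_1$ is used). This produces the asserted bound $\mathfrak R^*_\gamma : L^r(\S) \to L^{q,\infty}(\R^2)$.

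The only genuine obstacle is Step~1: one must verify by case distinction that every triple $(\gamma, r, q)$ in the admissible region \eqref{eq:weak_conditions_radial} can indeed be written as a convex combination of a Fefferman-Zygmund endpoint $(r_0, q_0)$ and an endpoint $(r_1, q_1)$ on the boundary of the region of validity of Lemma~\ref{lem:weaktypebounds_radial}, with $\gamma$ absorbed into the second endpoint through the rescaling $\gamma \mapsto \gamma/\theta$. Once those parameters are in hand, Steps~2--3 are essentially mechanical.
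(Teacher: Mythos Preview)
Your proposal is correct and follows essentially the same approach as the paper: interpolate the unweighted Fefferman--Zygmund bound against the weak-type estimate of Lemma~\ref{lem:weaktypebounds_radial} via the analytic family $\mathcal E_s = \mathfrak R^*_{s\gamma/\theta}$ and Theorem~\ref{thm:GrafMas}, then use $[L^{r_0}(\S),L^{r_1}(\S)]_\theta = L^r(\S)$ and the embedding \eqref{eq:embedding_radial}. The paper likewise defers the parameter arithmetic of Step~1 to an appendix (Proposition~\ref{prop:InterpolationArithII}), which verifies exactly the equivalence you describe.
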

  \begin{proof}
  The Fefferman-Zygmund result (Theorem~\ref{thm:FeffermanZygmund}) 
  yields the boundedness $\mathfrak R_0^*:L^{r_0}(\sph{1})\to L^{q_0}(\R^2)$ and
  $\mathfrak R_{\gamma_1}^*:L^{r_1}(\sph{1})\to Z^{q_1}(\R^2)$ follows from
  Lemma~\ref{lem:weaktypebounds_radial}. Here, $r_0,r_1\in [1,\infty],q_0,q_1\in (0,\infty)$ are
  such that 
  $$
    \frac{3}{q_0}\leq \frac{1}{r_0'}, \quad \frac{1}{q_0}<\frac{1}{4},\quad 
   \gamma_1\geq \max\left\{\frac{1}{q_1},\frac{2}{q_1}-\frac{1}{r_1'},\frac{2}{q_1}-\frac{1}{2}\right\}.
  $$
  Using interpolation (Theorem~\ref{thm:GrafMas}) and $[L^{r_0}(\S),L^{r_1}(\S)]_\theta=L^{r_\theta}(\S)$ as
  well as~\eqref{eq:embedding_radial} we find that $\mathfrak R_\gamma^*:L^r(\S)\to L^{q,\infty}(\R^2)$ is
  bounded provided that  
 \begin{align*}
   \frac{1}{q}= \frac{1-\theta}{q_0}+\frac{\theta}{q_1},\quad
   \frac{1}{r}= \frac{1-\theta}{r_0}+\frac{\theta}{r_1}, \quad \gamma = \theta \gamma_1, \quad 
   0< \theta<1, \quad q_0\neq q_1 
 \end{align*}
   In the Appendix (Proposition~\ref{prop:InterpolationArithII}) we show that such a choice can be made if
   and only if \eqref{eq:weak_conditions_radial} holds and the claim is proved.
  \end{proof}

  Next we use real interpolation to upgrade these weak estimates via interpolation
  just as in the proof of Corollary~\ref{cor:mappingproperties}. In the non-endpoint case this is possible
  without additional constraint, but in the endpoint case we have to exclude the case $q=r'$ where 
  $\gamma=\frac{3}{2q}-\frac{1}{2r'}=\frac{2}{q}-\frac{1}{r'}$. One may check that precisely in this case
  it is impossible to choose $\eps>0$ as in the proof of Corollary~\ref{cor:mappingproperties}.

  \begin{cor}\label{cor:mappingproperties2}
    Let $\gamma>0$ and $0<q<\infty,1<r<\infty$. Then $\mathfrak R_\gamma^*: L^{r,s}(\S)\to L^{q,s}(\R^2)$ is
    bounded for all $s\in [1,\infty]$ provided that $\gamma>  \frac{2}{q}-\frac{1}{2}$ and
  \begin{equation} \label{eq:conditions_radial}
    \gamma> \max\left\{ \frac{3}{2q}-\frac{1}{2r'},\frac{2}{q}-\frac{1}{r'}\right\}
    \quad\text{or}\quad
    \gamma = \max\left\{ \frac{3}{2q}-\frac{1}{2r'},\frac{2}{q}-\frac{1}{r'}\right\},\;
    q\neq r'. 
  \end{equation}
  \end{cor}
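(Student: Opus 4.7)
The argument follows the template of Corollary~\ref{cor:mappingproperties}. First decompose $\mathfrak R_\gamma^* = T_1 + T_2$ by multiplying $F$ with a fixed smooth partition of unity on $\S$ that separates $(0,\pm 1)$ from $(\pm 1, 0)$; Lemma~\ref{lem:weaktypebounds_radial} then yields
$$T_1 : L^{r}(\S) \to L^{q,\infty}_x(\R;L^{q,\infty}_y(\R))  \quad\text{and}\quad T_2 : L^{r}(\S) \to L^{q,\infty}_y(\R;L^{q,\infty}_x(\R))$$
whenever~\eqref{eq:weak_conditions_radial} holds. Both summands can be handled identically, so I concentrate on $T_1$.

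The plan is to pick nearby exponents $q_0, q_1 \in (0,\infty)$ with $q_0 \neq q_1$ and $r_0, r_1 \in (1,\infty)$ satisfying
$$\frac{1}{q} = \frac{1}{2}\left(\frac{1}{q_0} + \frac{1}{q_1}\right), \qquad \frac{1}{r} = \frac{1}{2}\left(\frac{1}{r_0} + \frac{1}{r_1}\right),$$
such that both pairs $(q_j, r_j)$ still satisfy the hypotheses~\eqref{eq:weak_conditions_radial} of Lemma~\ref{lem:weak_gamma}. Granted such a choice, applying real interpolation \cite[Theorem~3.11.8]{BL} to $T_1$ on the two weak-type bounds and using Proposition~\ref{prop:embedding}(i) (where $q_0 \neq q_1$ is precisely the requirement) produces $T_1 : L^{r,s}(\S) \to L^{q,s}(\R^2)$ for every $s \in [1,\infty]$. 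Summing the contributions of $T_1$ and $T_2$ then yields the claim.

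It remains to verify the existence of such perturbations. Visualising in the $(1/r', 1/q)$-plane, the two inequalities $\frac{3}{2q}-\frac{1}{2r'} \leq \gamma$ and $\frac{2}{q}-\frac{1}{r'} \leq \gamma$ cut out a convex region whose two boundary lines intersect precisely on the diagonal $q = r'$; the first is binding when $q > r'$ and the second when $q < r'$, while the strict condition $\gamma > \frac{2}{q} - \frac{1}{2}$ survives any sufficiently small perturbation. In the non-endpoint case of~\eqref{eq:conditions_radial} the point $(1/r', 1/q)$ lies strictly inside this region, so arbitrary small perturbations are admissible. In the endpoint case with $q \neq r'$ the point lies on exactly one of the two boundary lines, away from their intersection, so one can perturb tangentially along that line to obtain two admissible companion points whose mean returns to $(1/r', 1/q)$.

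The only genuine obstacle occurs at the excluded configuration $q = r'$: this is the vertex where the two boundary lines meet, so the admissibility region has a convex corner there and no tangential direction is available. Consequently no pair of admissible points with $q_0 \neq q_1$ can realise $(1/r', 1/q)$ as a nontrivial convex combination, which is precisely why the endpoint clause of~\eqref{eq:conditions_radial} must exclude $q = r'$.
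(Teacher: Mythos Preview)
There is a gap in your first paragraph. Lemma~\ref{lem:weaktypebounds_radial} requires
$\gamma \geq \max\{\frac{1}{q},\frac{2}{q}-\frac{1}{r'},\frac{2}{q}-\frac{1}{2}\}$, in particular
$\gamma\geq\frac{1}{q}$, which is strictly stronger than the conditions~\eqref{eq:weak_conditions_radial}
you invoke. Under the hypotheses of the corollary $\gamma$ can sit well below $\frac{1}{q}$ (take $q$ large
and $r'$ small), so the mixed Lorentz bounds you claim for $T_1,T_2$ are simply not available from that
lemma. The weaker conditions~\eqref{eq:weak_conditions_radial} only emerge after the Stein interpolation
step in Lemma~\ref{lem:weak_gamma}, and that step lands in a Calder\'on product which, by
Proposition~\ref{prop:embedding}(iii), embeds into $L^{q,\infty}(\R^2)$ --- \emph{not} into a single mixed
Lorentz space. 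So the decomposition does not recover mixed-space endpoints under the weaker hypotheses
either.

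The remedy is to drop the decomposition and work directly with Lemma~\ref{lem:weak_gamma}, whose target is
already the ordinary weak space $L^{q,\infty}(\R^2)$. Apply it at your two perturbed pairs $(q_j,r_j)$ to
obtain $\mathfrak R_\gamma^*:L^{r_j}(\S)\to L^{q_j,\infty}(\R^2)$, and then real-interpolate: standard
Lorentz-space interpolation on both domain and target (no appeal to Proposition~\ref{prop:embedding}
needed) yields $\mathfrak R_\gamma^*:L^{r,s}(\S)\to L^{q,s}(\R^2)$ for every $s\in[1,\infty]$. This is
exactly the template of Corollary~\ref{cor:mappingproperties}(iii), and is what the paper indicates. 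Your
geometric perturbation analysis in the last two paragraphs is correct and carries over without change.
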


  \medskip
  
  \textbf{Proof of Theorem~\ref{thm:main2} -- sufficient conditions:}  The result for $q=\infty$ is trivial
  and the one for $\gamma=0$ is already covered by Theorem~\ref{thm:FeffermanZygmund}, so assume
  $0<q<\infty$ and $\gamma>0$. Choosing $s=r$ in Corollary~\ref{cor:mappingproperties2} and exploiting the
  embeddings of Lorentz spaces yields that for any given $\gamma>0$ the operator $\mathfrak
  R_\gamma^*:L^r(\S)\to L^q(\S)$ is bounded provided that $\gamma>\frac{2}{q}-\frac{1}{2}$ and
  \begin{align*}
    \gamma&> \max\left\{ \frac{3}{2q}-\frac{1}{2r'},\frac{2}{q}-\frac{1}{r'}\right\},\;1<r<\infty
    \qquad\text{or}\\
    \gamma &= \max\left\{ \frac{3}{2q}-\frac{1}{2r'},\frac{2}{q}-\frac{1}{r'}\right\},\;
    q\neq r',\; 1<r\leq q. 
  \end{align*}
  The non-endpoint estimates actually also hold for $r\in\{1,\infty\}$ as 
  can be deduced from Lemma~\ref{lem:weak_gamma} and a short interpolation argument. 
  So the  conditions stated in Theorem~\ref{thm:main2} are sufficient. \qed

 \section{Proof of Theorem~\ref{thm:main2} -- Necessary conditions}
  
  Next we prove that the conditions in Theorem~\ref{thm:main2} cannot be improved by providing suitable
  counterexamples. As before, the constant density and Knapp-type examples are all we need. 
  For given exponents $1\leq r\leq \infty, 0<q<\infty$ and $\gamma>0$ we shall deduce  
  the necessity of~\eqref{eq:conditions_radial} as follows:
  \begin{itemize}
    \item[(i)] $\gamma>\frac{2}{q}-\frac{1}{2}$ is necessary by Lemma~\ref{lem:NecI_radial},
    \item[(ii)] $\gamma\geq \max\{\frac{3}{2q}-\frac{1}{2r'},\frac{2}{q}-\frac{1}{r'}\}$ is necessary by  
    Lemma~\ref{lem:NecII_radial},
    \item[(iii)] $\gamma=\max\{\frac{3}{2q}-\frac{1}{2r'},\frac{2}{q}-\frac{1}{r'}\}$ implies $q\neq
    r'$ by Lemma~\ref{lem:NecII_radial}~(iii).
    \item[(iv)]  $\gamma=\max\{\frac{3}{2q}-\frac{1}{2r'},\frac{2}{q}-\frac{1}{r'}\}$ implies $1<r\leq
    q$   by Lemma~\ref{lem:NecII_radial}~(i) and
    Lemma~\ref{lem:NecIII_radial}.
  \end{itemize}
  The computations are similar to the ones carried out before, so we keep the presentation as short as
  possible.

    \begin{lem}  \label{lem:NecI_radial}
      Assume $0\leq\gamma<\infty, 1\leq r\leq \infty,0<q<\infty$. If the operator $\mathfrak
      R^*_{\gamma}:L^r(\sph{1})\to L^q(\R^2)$  is bounded, then $\gamma>\frac{2}{q}-\frac{1}{2}$.
    \end{lem}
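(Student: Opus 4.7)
The proof plan follows the template of Lemma~\ref{lem:NecI}: test the operator on the constant density $F\equiv 1\in L^r(\S)$ and extract the necessary condition from a divergent series. The only adjustment is that the product weight $(1+|x|)^{-\alpha}(1+|y|)^{-\beta}$ is replaced by the radial weight $(1+|x|+|y|)^{-\gamma}$, which is actually easier to handle on annuli because it is essentially constant there.

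Concretely, take $F\equiv 1$. Then $\mathcal R^*F(x,y) = c\,J_0(|(x,y)|)$, so by the Bessel lower bound \eqref{eq:estJ0}
\begin{align*}
  \|\mathfrak R_\gamma^*F\|_{L^q(\R^2)}^q
  &\gtrsim \sum_{j=1}^\infty j^{-q/2}
    \int_{\{z_j-\delta\leq |(x,y)|\leq z_j+\delta\}} (1+|x|+|y|)^{-q\gamma}\,d(x,y).
\end{align*}
On the $j$-th annulus one has $z_j \leq |x|+|y| \leq \sqrt 2\, z_j$ by the equivalence of $\ell^1$ and $\ell^2$ norms on $\R^2$, so $(1+|x|+|y|)\simeq z_j\simeq j$ uniformly on the annulus, and the annulus itself has area $\simeq \delta z_j \simeq j$. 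Hence
\begin{align*}
  \|\mathfrak R_\gamma^*F\|_{L^q(\R^2)}^q
  \gtrsim \sum_{j=1}^\infty j^{-q/2}\cdot j^{-q\gamma}\cdot j
  = \sum_{j=1}^\infty j^{1-q(\frac{1}{2}+\gamma)}.
\end{align*}

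Since $\|F\|_{L^r(\S)}\simeq 1$, the boundedness $\mathfrak R^*_\gamma: L^r(\S)\to L^q(\R^2)$ forces this series to converge, i.e.\ $1-q(\tfrac12+\gamma)<-1$, which is exactly $\gamma>\tfrac{2}{q}-\tfrac{1}{2}$. No real obstacle arises; the only point worth a line of justification is the equivalence $1+|x|+|y|\simeq z_j$ on the annulus, which replaces the nontrivial split of $x$- and $y$-variables that was needed in the proof of Lemma~\ref{lem:NecI}.
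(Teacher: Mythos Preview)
Your argument is correct and essentially identical to the paper's own proof: test with $F\equiv 1$, use the Bessel lower bound \eqref{eq:estJ0} on the annuli of radius $z_j$, note that $(1+|x|+|y|)\simeq z_j\simeq j$ there and that each annulus has area $\simeq j$, and read off the necessary condition from the convergence of $\sum_j j^{1-q(\frac12+\gamma)}$. The only cosmetic slip is that on the $j$-th annulus one actually has $z_j-\delta\le |x|+|y|\le \sqrt2\,(z_j+\delta)$ rather than $z_j\le |x|+|y|\le \sqrt2\,z_j$, but this does not affect the equivalence $(1+|x|+|y|)\simeq j$.
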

    \begin{proof}
      As in Lemma~\ref{lem:NecI} the claim follows from taking $F:= 1\in L^r(\sph{1})$ and 
    \begin{align*}
      \|\mathfrak R_{\gamma}^*F\|_{L^q(\R^2)}^q
      &= \| J_0(|(x,y)|)(1+|x|+|y|)^{-\gamma}\|_{L^q(\R^2)}^q \\
      &\stackrel{\eqref{eq:estJ0}}\gtrsim   \sum_{j=1}^\infty j^{-\frac{q}{2}} (1+z_j)^{-q\gamma}
      \big| \big\{(x,y)\in\R^2: z_j-\delta\leq |(x,y)|\leq z_j +\delta\big\}\big|      \\
      &\gtrsim \sum_{j=1}^\infty j^{-\frac{q}{2}} \cdot z_j^{1-q\gamma} \\
      &\gtrsim \sum_{j=1}^\infty j^{1-q(\frac{1}{2}+\gamma)}.
    \end{align*}
 \end{proof}

 Next we discuss the Knapp example.

 \begin{lem}  \label{lem:NecII_radial}
   Assume  $0\leq\gamma<\infty, 1\leq r\leq \infty,0<q<\infty$.  Then the following conditions
   are necessary for $\mathfrak R^*_{\gamma}:L^r(\S)\to L^q(\R^2)$ to be bounded:
    \begin{itemize}
     \item[(i)] if $\gamma=\frac{2}{q}$, then $r>1$, 
     \item[(ii)] if $\frac{2}{q}>\gamma>\frac{1}{q}$, then $\gamma\geq
     \frac{2}{q}-\frac{1}{r'}$,
     \item[(iii)] if $\gamma=\frac{1}{q}$, then $0>\frac{1}{q}-\frac{1}{r'}$,
     \item[(iv)] if $\gamma<\frac{1}{q}$, then $\gamma\geq \frac{3}{2q}-\frac{1}{2r'}$. 
   \end{itemize}
 \end{lem}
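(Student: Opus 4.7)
\textbf{Proof proposal for Lemma~\ref{lem:NecII_radial}.}
The plan is to re-use the Knapp test function $F=\ind_{\mathcal C_\delta}$ from Lemma~\ref{lem:NecII}, together with the very same sets $E_j\subset\R^2$ (with $|x|\leq c\delta^{-1}$ and $y$ in a window of length $\sim 1$ around $2\pi j$). From the work already done there, we have $\|F\|_{L^r(\S)}\simeq \delta^{1/r}$, and the sets $E_j$ ($j=1,\ldots,\lfloor c_0\delta^{-2}\rfloor$) are mutually disjoint with $|(\mathcal R^*F)(x,y)|\gtrsim \delta$ on each $E_j$. The only new ingredient is the radial weight $(1+|x|+|y|)^{-\gamma}$, which leads to
$$
  \|\mathfrak R_\gamma^* F\|_{L^q(\R^2)}^q
  \gtrsim \delta^q \sum_{j=1}^{\lfloor c_0\delta^{-2}\rfloor}
  \int_0^{c\delta^{-1}}(1+x+j)^{-q\gamma}\,dx,
$$
since on $E_j$ we have $1+|x|+|y|\simeq 1+x+j$. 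So the whole proof reduces to evaluating this double sum-integral and comparing with $\delta^{1/r}$ as $\delta\to 0^+$.

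The key step is a case analysis in the value of $q\gamma$. Fix $j$ and set $I(j,\delta):=\int_0^{c\delta^{-1}}(1+x+j)^{-q\gamma}\,dx$. Splitting at $x\sim j$ (or using the closed form obtained by antidifferentiation) gives asymptotically $I(j,\delta)\simeq \delta^{q\gamma-1}$ in the regime $j\lesssim \delta^{-1}$ (when $q\gamma<1$, so the integral is controlled by the outer scale $\delta^{-1}$) and $I(j,\delta)\simeq \delta^{-1}(1+j)^{-q\gamma}$ in the regime $j\gtrsim\delta^{-1}$ (by the mean value theorem). For $q\gamma>1$ one instead gets $I(j,\delta)\simeq (1+j)^{1-q\gamma}$ in the first regime. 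Summing $j$ from $1$ to $\delta^{-2}$ then yields
\begin{itemize}
  \item if $q\gamma<1$: $\|\mathfrak R_\gamma^* F\|_{L^q}\gtrsim \delta^{1+2\gamma-3/q}$, so that boundedness forces $2\gamma+\frac{1}{r'}\geq \frac{3}{q}$, giving (iv);
  \item if $1<q\gamma<2$: $\|\mathfrak R_\gamma^* F\|_{L^q}\gtrsim \delta^{1+\gamma-2/q}$, giving $\gamma\geq \frac{2}{q}-\frac{1}{r'}$, i.e.\ (ii);
  \item if $q\gamma=1$: both regimes contribute a logarithm, yielding $\|\mathfrak R_\gamma^* F\|_{L^q}\gtrsim \delta^{1-1/q}|\log\delta|^{1/q}$, from which one reads off that boundedness forces $\frac{1}{r'}>\frac{1}{q}$, i.e.\ (iii);
  \item if $q\gamma=2$: the first regime contributes a logarithm while the second is $O(1)$, producing $\|\mathfrak R_\gamma^* F\|_{L^q}\gtrsim \delta\,|\log\delta|^{1/q}$, which is incompatible with $\delta^{1/r}$ unless $r>1$, giving (i).
\end{itemize}

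The expected main obstacle is carrying out the case-by-case asymptotic analysis of $I(j,\delta)$ and the subsequent sum over $j$ cleanly, especially around the two critical thresholds $q\gamma=1$ and $q\gamma=2$ where logarithmic factors appear and decide the endpoint behaviour. Once these asymptotics are in place, the bookkeeping of which case produces which one of (i)--(iv) is immediate and mirrors the structure of Lemma~\ref{lem:NecII}.
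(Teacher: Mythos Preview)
Your proposal is correct and follows essentially the same approach as the paper: the same Knapp cap $F=\ind_{\mathcal C_\delta}$, the same disjoint sets $E_j$, the same reduction to $\sum_{j\leq c_0\delta^{-2}}\int_0^{c\delta^{-1}}(x+j)^{-q\gamma}\,dx$, and the same case analysis in $q\gamma$ with the split at $j\sim\delta^{-1}$. One minor imprecision: for $q\gamma=1$ it is only the outer range $j\gtrsim\delta^{-1}$ that produces the extra $|\log\delta|$ after summation (the inner range sums to $\simeq\delta^{-1}$ without a log, as the paper notes via $\sum_{j\le\delta^{-1}}\log(\delta^{-1}j^{-1})\simeq\delta^{-1}$), but your stated final bound $\delta^{1-1/q}|\log\delta|^{1/q}$ and the resulting conclusion~(iii) are correct.
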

 \begin{proof}
   We mimick the proof of Lemma~\ref{lem:NecII}. With the same notation, the function $F:=
   \ind_{\mathcal C_\delta}$ satisfies $\|F\|_{L^r(\S)}  \simeq \delta^{\frac{1}{r}}$ and  for $(x,y)\in E_j$ we have
   $|(\mathcal R^*F)(x,y)|\gtrsim \delta$ as $\delta\to 0^+$ provided that $j\leq c_0 \delta^{-2}$ for some
   small constant $c_0>0$. This implies, for fixed $\gamma,q,r$ and small $\delta>0$,
   \begin{align*}
     \frac{\|\mathfrak R^*_{\gamma}F\|_{L^q(\R^2)}}{\|F\|_{L^r(\S)}}
     &\gtrsim \delta^{-\frac{1}{r}} \| (\mathcal R^*F)(x,y)
     (1+|x|+|y|)^{-\gamma}\|_{L^q(\R^2)}  \\
     &\gtrsim \delta^{-\frac{1}{r}}\,   \left(\sum_{j=1}^{\lfloor c_0\delta^{-2}\rfloor} \| \ind_{E_j}
     \delta (1+|x|+|y|)^{-\gamma}\|_{L^q(\R^2)}^q \right)^{\frac{1}{q}} \\ 
     &\simeq \delta^{-\frac{1}{r}} \left( \sum_{j=1}^{\lfloor c_0 \delta^{-2}\rfloor} \delta^q 
     \int_0^{c\delta^{-1}}  (x+j)^{-\gamma q}\,dx    \right)^{\frac{1}{q}} \\
     &\simeq \delta^{\frac{1}{r'}}   \left( 
     \sum_{j=1}^{\lfloor c_0 \delta^{-2}\rfloor}    
     \begin{cases}
       j^{1-\gamma q}- (j+\delta^{-1})^{1-\gamma q} &, \text{if }\gamma >\frac{1}{q}\\
       \log(j+\delta^{-1})-\log(j) &,\text{if } \gamma = \frac{1}{q}\\
       (j+\delta^{-1})^{1-\gamma q} - j^{1-\gamma q}  &,\text{if } \gamma<\frac{1}{q}
     \end{cases}
     \right)^{\frac{1}{q}} \\
     &\gtrsim   \delta^{\frac{1}{r'}}   \left( 
     \sum_{j=1}^{\lfloor \delta^{-1}\rfloor}    
     \begin{cases}
       j^{1-\gamma q}  &,\text{if } \gamma > \frac{1}{q}\\
       \log(\delta^{-1} j^{-1})  &,\text{if } \gamma = \frac{1}{q}\\
       \delta^{-1+\gamma q}  &,\text{if } \gamma < \frac{1}{q}\\
     \end{cases}
     \; + \sum_{j=\lceil \delta^{-1}\rceil}^{\lfloor c_0 \delta^{-2}\rfloor}
       \delta^{-1} j^{-\gamma q}   
     \right)^{\frac{1}{q}} \\
     &\simeq \delta^{\frac{1}{r'}}   \left( 
     \begin{cases}
       1 &,\text{if } \gamma >\frac{2}{q}\\
         |\log(\delta)|   &,\text{if } \gamma=\frac{2}{q}  \\
       \delta^{-2+\gamma q}  &,\text{if }\gamma<\frac{2}{q} \\
     \end{cases}
     \; +  \;
       \begin{cases}
       \delta^{-2+\gamma q}  &, \text{if }\gamma> \frac{1}{q}\\
       \delta^{-1}|\log(\delta^{-1})|  &, \text{if }\gamma = \frac{1}{q}\\
       \delta^{-3+2\gamma q}   &, \text{if }\gamma <\frac{1}{q}\\
     \end{cases}
     \right)^{\frac{1}{q}} \\
     &\simeq \delta^{\frac{1}{r'}}   \left( 
     \begin{cases}
       1 &, \text{if }\gamma >\frac{2}{q}\\
       |\log(\delta)|   &, \text{if }\gamma  = \frac{2}{q} \\
       \delta^{-2+\gamma q}  &, \text{if }\frac{2}{q}>\gamma>\frac{1}{q} \\
       \delta^{-1}|\log(\delta)| &, \text{if }\gamma=\frac{1}{q}\\
       \delta^{-3+2\gamma q} &,\text{if } \gamma<\frac{1}{q}
     \end{cases}
     \right)^{\frac{1}{q}}. 
   \end{align*}
   This implies the claim as $\delta\to 0^+$.
   Note that in the second last estimate for $\gamma=\frac{1}{q}$ we used 
   $$
     \sum_{j=1}^{\lfloor \delta^{-1}\rfloor}   \log(\delta^{-1} j^{-1}) 
     \simeq \int_1^{\delta^{-1}} \log(\delta^{-1}x^{-1})\,dx 
     \simeq \delta^{-1}.
   $$ 
    
 \end{proof} 
 
 \begin{lem}   \label{lem:NecIII_radial}
   Assume $0<\gamma<\infty$ where
   $\gamma=\max\{\frac{3}{2q}-\frac{1}{2r'},\frac{2}{q}-\frac{1}{r'}\}>\frac{2}{q}-\frac{1}{2}$ and 
    $0<q< \infty,1<r\leq \infty$. Then $\mathfrak R^*_{\gamma}:L^r(\S)\to L^q(\R^2)$ can only be bounded if
   $q\geq r$.
 \end{lem}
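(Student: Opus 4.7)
My plan is a case split on which of the two candidates attains the maximum defining $\gamma$, followed in one remaining sub-case by Stein interpolation.

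In the first case $\gamma=\frac{3}{2q}-\frac{1}{2r'}$ (equivalently $r\geq q'$), the hypothesis $\gamma>\frac{2}{q}-\frac{1}{2}$ is algebraically equivalent to $\frac{1}{q'}>\frac{1}{r'}$, that is $r<q$, so the conclusion $q\geq r$ is immediate. In the second case $\gamma=\frac{2}{q}-\frac{1}{r'}$ (i.e.~$r\leq q'$), the hypothesis reduces to $r<2$, and for $q\geq 2$ we obtain $r<2\leq q$ at once. There remains only the sub-case $q<2$, $q<r<2$ with $\gamma=\frac{2}{q}-\frac{1}{r'}$.

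For this remaining sub-case I would proceed by contradiction, mimicking the interpolation argument in Lemma~\ref{lem:NecIV}. Assume boundedness and set $(q_1,r_1,\gamma_1):=(q,r,\gamma)$. After picking a second bounded point $(q_0,r_0,\gamma_0)$ supplied by Theorem~\ref{thm:main2} with $q_0>2$, apply Stein interpolation (Theorem~\ref{thm:GrafMas}) to the admissible analytic family
\[
\mathcal E_s F(x,y):=(1+|x|+|y|)^{-((1-s)\gamma_0+s\gamma_1)}(\mathcal R^*F)(x,y),\qquad s\in\bar S.
\]
This yields boundedness of $\mathfrak R^*_{\gamma_\theta}:L^{r_\theta}(\S)\to L^{q_\theta}(\R^2)$ for all intermediate $\theta\in(0,1)$ with linearly interpolated parameters. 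Choosing $(q_0,r_0,\gamma_0)$ appropriately, the interpolation line in the $(1/q,1/r)$-plane should cross the diagonal $1/r=1/q$ at a parameter $\theta_\ast$ with $q_{\theta_\ast}>2$ and $r_{\theta_\ast}>q_{\theta_\ast}$, contradicting the already-established $q>2$ conclusion coming from Case~B of the first step.

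The main obstacle is the precise choice of $(q_0,r_0,\gamma_0)$ and the accompanying arithmetic verification. Because the Case~A endpoint relation $\gamma=\frac{2}{q}-\frac{1}{r'}$ is linear in $(1/q,1/r')$, interpolating two points on the Case~A endpoint curve keeps $r_\theta<2$ throughout and forbids the desired crossing in the half-plane $q>2$. The resolution is to take $(q_0,r_0,\gamma_0)$ on the Case~B endpoint curve with $q_0>2$ and $r_0\in(q_0',q_0)$, where Theorem~\ref{thm:main2}(ii) guarantees boundedness, and to verify that, for $r_0$ sufficiently close to $q_0$, the resulting interpolation line indeed produces the crossing with $\gamma_{\theta_\ast}$ remaining strictly above $\frac{2}{q_{\theta_\ast}}-\frac{1}{2}$ and on the Case~B endpoint curve where the Case~B rigidity bites. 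Should this arithmetic prove too delicate, a direct alternative is to refine the Knapp counterexample of Lemma~\ref{lem:NecII_radial} in the spirit of Lemma~\ref{lem:OptimalL2Result}, combining $F_\delta(\sin\phi,\cos\phi)=\phi^{-\mu}\ind_{[0,\delta]}(\phi)$ with the oscillatory-integral estimate of Proposition~\ref{prop:oscint}.
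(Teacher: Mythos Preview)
Your case split agrees with the paper: the case $\gamma=\frac{3}{2q}-\frac{1}{2r'}$ is dispatched algebraically (the hypothesis $\gamma>\frac{2}{q}-\frac{1}{2}$ is equivalent to $q>r$), and for $\gamma=\frac{2}{q}-\frac{1}{r'}$ with $q\geq 2$ you correctly observe $r<2\leq q$. The remaining sub-case $\gamma=\frac{2}{q}-\frac{1}{r'}$ with $q<2$ is where your plan runs into trouble.

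The interpolation scheme you outline cannot produce a contradiction. If you interpolate a Case~A endpoint $(q_1,r_1,\gamma_1)$ with a Case~B endpoint $(q_0,r_0,\gamma_0)$, then since both endpoint relations are affine in $(1/q,1/r',\gamma)$, a short computation gives
\[
\gamma_\theta-\Big(\tfrac{2}{q_\theta}-\tfrac{1}{r_\theta'}\Big)=(1-\theta)\Big(\tfrac{1}{2r_0'}-\tfrac{1}{2q_0}\Big)>0,
\qquad
\gamma_\theta-\Big(\tfrac{3}{2q_\theta}-\tfrac{1}{2r_\theta'}\Big)=\theta\Big(\tfrac{1}{2q_1}-\tfrac{1}{2r_1'}\Big)>0,
\]
so for every $\theta\in(0,1)$ the interpolated triple lies strictly \emph{above} the endpoint curve, i.e.\ in the non-endpoint region where boundedness is already furnished by Theorem~\ref{thm:main2}(i); nothing is contradicted. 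Staying entirely on the Case~A curve does not help either: along $\gamma=\frac{2}{q}-\frac{1}{r'}$ the constraint $\gamma>\frac{2}{q}-\frac{1}{2}$ is exactly $r<2$, so any interpolated point with $q_\theta\geq 2$ that still satisfies the lemma's hypotheses automatically has $r_\theta<2\leq q_\theta$. In short, every ``forbidden'' point you hope to reach is forbidden only by algebra, not by a genuine obstruction from boundedness, so there is nothing for the interpolation to collide with. (This is precisely why the analogous Lemma~\ref{lem:NecIV} first proves a genuine $q=2$ obstruction in Lemma~\ref{lem:OptimalL2Result} before interpolating.)

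The paper instead handles the whole Case~A range $\gamma=\frac{2}{q}-\frac{1}{r'}$ in one stroke by a duality argument. After reducing to $q\geq 1$, one tests the dual inequality
\[
\|\hat f\|_{L^{r'}(\S)}\lesssim \|(1+|x|+|y|)^{\gamma}f\|_{L^{q'}(\R^2)}
\]
on the radial-times-modulation family $f_\eps(x,y)=(1+|x|^2+|y|^2)^{-\delta/2}e^{iy}$ with $\delta=\gamma+\frac{2}{q'}+\eps=2-\frac{1}{r'}+\eps$. The right-hand side is $\lesssim\eps^{-1/q'}$, while Proposition~\ref{prop:oscint} gives $|\hat f_\eps(\xi)|\gtrsim|(\xi_1,\xi_2-1)|^{-1/r'+\eps}$ near $(0,1)$, whence $\|\hat f_\eps\|_{L^{r'}(\S)}\gtrsim\eps^{-1/r'}$. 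Comparing exponents forces $r'\geq q'$, i.e.\ $q\geq r$. Your fallback suggestion (a refined Knapp-type input) points in the right direction, but the clean route is this dual test function; it replaces the missing interpolation step entirely.
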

 \begin{proof}    
   The claim is trivial for $\gamma=\frac{3}{2q}-\frac{1}{2r'}$
   because $\gamma>\frac{2}{q}-\frac{1}{2}$ already implies $q>r$. So assume 
   $\gamma=\frac{2}{q}-\frac{1}{r'}$. It suffices to prove the
   claim assuming a priori $q\geq 1$. We then prove the equivalent statement that the
   dual bound 
   $$
      \|\hat f\|_{L^{r'}(\S)}  \lesssim \|(1+|x|+|y|)^{\gamma} f\|_{L^{q'}(\R^2)}
  $$
  can only hold for $q\geq r$. Set $f_\eps(x,y):=(1+|x|^2+|y|^2)^{-\delta/2} e^{iy}$ for
  $\delta:=2-\frac{1}{r'}+\eps = \gamma+\frac{2}{q'}+\eps$. Note that $1<\delta<2$.
  Then 
  \begin{align*}
     \|(1+|x|+|y|)^{\gamma}  f_\eps\|_{L^{q'}(\R^2)}
     &\les  \|(1+|x|+|y|)^{\gamma-\delta} \|_{L^{q'}(\R^2)} \\
     &\les  \left( \int_0^\infty r (1+r)^{q'(\gamma-\delta)}  \,dr  \right)^{\frac{1}{q'}}  \\
     &\les  \left(\frac{1}{-2-q'(\gamma-\delta)}\right)^{\frac{1}{q'}} \\
     &\les \eps^{-\frac{1}{q'}}
  \end{align*}
  and the same bound holds in the case $q'=\infty$. 
  Proposition~\ref{prop:oscint}  yields as $(\xi_1,\xi_2)\to (0,1)$
   \begin{align*}
     |\hat f_\eps(\xi)|
     &= \left| \int_{\R^2} (1+|x|^2+|y|^2)^{-\delta/2} e^{-i(\xi_1,\xi_2-1)\cdot (x,y)}\,dx \,dy \right| \\
     &= \left| \int_{\S} \int_0^\infty r(1+r^2)^{-\delta/2} e^{-ir\omega\cdot (\xi_1,\xi_2-1)}\,dr
     \,d\sigma(\omega) \right|   \\
     &\gtrsim  \int_{\S} |\omega\cdot (\xi_1,\xi_2-1)|^{\delta-2}  \,d\sigma(\omega)    \\
     &\simeq  |(\xi_1,\xi_2-1)|^{\delta-2}      \\
     &\simeq  (|\xi_1|+|\xi_2-1|)^{-\frac{1}{r'}+\eps}
   \end{align*}
   So the validity of the dual estimate stated above and
   $r'<\infty$ implies for some small $c>0$
   \begin{align*}
     \eps^{-\frac{1}{q'}}
     &\gtrsim \|\hat f_\eps\|_{L^{r'}(\S)}
     \;=\; \left( \int_0^{2\pi} |\hat f_\eps(\sin(t),\cos(t))|^{r'}\,dt \right)^{\frac{1}{r'}}\\
     &\gtrsim \left( \int_0^{c}
     (|\sin(t)|+|\cos(t)-1|)^{r'(-\frac{1}{r'}+ \eps)}  \,dt \right)^{\frac{1}{r'}}
     \;\gtrsim\; \left( \int_0^{c} t^{-1+ r'\eps} \,dt \right)^{\frac{1}{r'}} \\
     &\simeq  \eps^{-\frac{1}{r'}}.
   \end{align*}
   From this we infer $r'\geq q'$, hence $q\geq r$.
 \end{proof}

\section*{Appendix 1 -- An oscillatory intgral}
   
   We first analyze the asymptotics of oscillatory integrals
   $$
     \int_0^\infty \chi(r)r^{-\kappa}\cos(\lambda r)\,dr
   $$
   as $\lambda\to 0^+$ under reasonable assumptions on $\chi$ and $\kappa$.
 
 \begin{prop}\label{prop:oscint}
   Let $\chi:[0,\infty)\to\R$ be measurable with $|\chi(z)-\chi_\infty|\leq C(1+z)^{-\sigma}$ where
   $\chi_\infty>0$ and $0<\kappa<1<\kappa+\sigma$.  Then 
   $$
     \lim_{\lambda\to 0^+} \lambda^{1-\kappa} \int_0^\infty \chi(r)r^{-\kappa} \cos(\lambda r)\,dr
     = \chi_\infty \int_0^\infty \rho^{-\kappa}\cos(\rho)\,d\rho 
     \in (0,\infty)
   $$
   locally uniformly with respect to $\kappa$. In particular,
   $$
       \int_0^\infty \chi(r)r^{-\kappa}\cos(\lambda r)\,dr
     \gtrsim \lambda^{\kappa-1} \qquad\text{as }\lambda \to 0^+.
   $$
 \end{prop}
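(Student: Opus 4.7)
The plan is to decompose $\chi(r)=\chi_\infty+\psi(r)$ with $\psi:=\chi-\chi_\infty$ satisfying $|\psi(r)|\leq C(1+r)^{-\sigma}$, and to treat the two resulting contributions separately. The main term $\chi_\infty\int_0^\infty r^{-\kappa}\cos(\lambda r)\,dr$ transforms under the substitution $\rho=\lambda r$ into $\chi_\infty\lambda^{\kappa-1} I(\kappa)$, where
\[
I(\kappa):=\int_0^\infty \rho^{-\kappa}\cos(\rho)\,d\rho = \Gamma(1-\kappa)\sin\!\left(\frac{\pi\kappa}{2}\right).
\]
This improper integral converges for $\kappa\in(0,1)$ (a single integration by parts reduces the tail to an absolutely convergent integral of order $\rho^{-\kappa-1}$), is strictly positive, and depends continuously on $\kappa$.

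For the remainder, the essential point is that $0<\kappa<1<\kappa+\sigma$ guarantees
\[
\int_0^\infty |\psi(r)|\, r^{-\kappa}\,dr \;\leq\; C\int_0^\infty (1+r)^{-\sigma} r^{-\kappa}\,dr \;<\;\infty,
\]
with a bound depending continuously on $\kappa$ on any compact subinterval of $(\max\{0,1-\sigma\},1)$. Hence $\int_0^\infty \psi(r)r^{-\kappa}\cos(\lambda r)\,dr$ is absolutely convergent and uniformly bounded in $\lambda>0$, and the prefactor $\lambda^{1-\kappa}\to 0$ drives this contribution to zero. Combining the two pieces yields the claimed limit, locally uniformly in $\kappa$, and the positivity of $I(\kappa)$ then implies the lower bound $\int_0^\infty \chi(r)r^{-\kappa}\cos(\lambda r)\,dr\gtrsim \lambda^{\kappa-1}$ as $\lambda\to 0^+$.

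The only conceptual subtlety is that the assumption $\kappa+\sigma>1$ is calibrated precisely so that $\psi(r)r^{-\kappa}$ is absolutely integrable at infinity, so one does not need to exploit the oscillation of $\cos(\lambda r)$ in order to control the remainder. Once this is observed the argument reduces to routine matching of scales via the substitution $\rho=\lambda r$.
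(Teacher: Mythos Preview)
Your proof is correct and follows essentially the same route as the paper: split $\chi=\chi_\infty+\psi$, scale the main term via $\rho=\lambda r$, and use the absolute integrability of $\psi(r)r^{-\kappa}$ (guaranteed by $\kappa+\sigma>1$) to kill the remainder after multiplication by $\lambda^{1-\kappa}$. The only difference is in the positivity step: you quote the explicit evaluation $I(\kappa)=\Gamma(1-\kappa)\sin(\pi\kappa/2)$, whereas the paper proves $I(\kappa)>0$ directly by an elementary pairing argument, grouping the positivity and negativity intervals of $\cos\rho$ on $[2k\pi,(2k+2)\pi]$ against the decreasing weight $\rho^{-\kappa}$.
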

 \begin{proof}
   We have for all $\lambda>0$
   \begin{align*}
      &\;\left|\lambda^{1-\kappa} \int_0^\infty \chi(r)r^{-\kappa}\cos(\lambda r)\,dr 
      - \chi_\infty \int_0^\infty \rho^{-\kappa}\cos(\rho)\,d\rho \right|  \\
     &=  \left|   \int_0^\infty \rho^{-\kappa}\big( \chi(
     \rho\lambda^{-1}) - \chi_\infty\big) \cos(\rho)\,d\rho \right|   \\
     &\leq  C \int_0^\infty \rho^{-\kappa}(1+ \rho \lambda^{-1})^{-\sigma}\,d\rho \\
     &=  C\lambda^{1-\kappa}  \int_0^\infty s^{-\kappa}(1+ s)^{-\sigma}\,ds.  
   \end{align*}
   This term converges zu zero as $\lambda\to 0^+$. So it remains to show    
   $$
     \int_0^\infty \rho^{-\kappa} \cos(\rho)\,d\rho  >0.
   $$
  Roughly speaking, it is sufficient to prove that the positivity regions of the integrand
 dominate the negativity regions. Formally, we prove for all $k\in\N_0$ 
 $$ 
   \int_{[2k\pi,(2k+\frac{1}{2})\pi]\cup [(2k+\frac{3}{2})\pi,(2k+2)\pi]}
    s^{-\kappa} \cos(s)\,ds
   > \left|\int_{[(2k+\frac{1}{2})\pi,(2k+\frac{3}{2})\pi]} s^{-\kappa} \cos(s)\,ds \right|.
 $$
 Write $\mu:=2\pi k$. Given the properties of cosine this is equivalent to
 $$ 
   \int_0^{\frac{\pi}{2}}
    \Big((\mu+s)^{-\kappa}+(\mu+2\pi-s)^{-\kappa}\Big) \cos(s)\,ds
    > \int_0^{\frac{\pi}{2}}  \Big((\mu+\pi+s)^{-\kappa}+(\mu+\pi-s)^{-\kappa}\Big)
    \cos(s)\,ds
 $$
 and the latter statement follows from    
 $$
   \cos(s)\Big(  (\mu+s)^{-\kappa}+(\mu+2\pi-s)^{-\kappa}-(\mu+\pi+s)^{-\kappa}-(\mu+\pi-s)^{-\kappa} \Big)
   >0 \quad\text{for } 0<s<\frac{\pi}{2}. 
 $$
 Indeed,  the first factor is positive on $(0,\pi/2)$ and second factor decreases to zero on the interval
 $(0,\frac{\pi}{2}]$.  
 \end{proof}

\section{Appendix 2 --  Interpolation arithmetic}
 
 The following result was used in the proof of Lemma~\ref{lem:weaktypebounds_smallalpha}.
 
 \begin{prop}\label{prop:InterpolationArithI}
   Assume $1\leq r<\infty$ and $0<q< \infty$. Then there are $r_0\in [1,\infty],r_1\in [1,\infty),
  q_0\in [1,\infty], q_1\in (0,\infty]$ and $\theta\in (0,1)$ such that
   \begin{align*} 
     \begin{aligned}
     &\frac{1}{q}=\frac{1-\theta}{q_0}+\frac{\theta}{q_1},\quad
     \frac{1}{r}=\frac{1-\theta}{r_0}+\frac{\theta}{r_1}\qquad \text{and} \\
     &\frac{1}{q_0} \leq \frac{1}{3r_0'},\quad\frac{1}{q_0}<\frac{1}{4},\quad
     \frac{\alpha}{\theta} \b{=} \frac{1}{q_1},\quad
     \frac{\beta}{\theta}\geq \frac{1}{q_1}-\frac{1}{2r_1'},\quad q_0\neq q_1
     \end{aligned}
   \end{align*}
   if and only if
  $$
     \alpha+\beta>\frac{2}{q}-\frac{1}{2}\quad\text{and}\quad 
     \alpha+2\beta\geq \frac{3}{q}-\frac{1}{r'}.
   $$ 
\end{prop}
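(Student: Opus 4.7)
My plan is to eliminate variables one at a time, reducing the existence of $(\theta, q_0, q_1, r_0, r_1)$ to the nonemptiness of an interval for a single scalar parameter. The clean substitution is to treat $\theta \in (0,1)$ and $B := \tfrac{1-\theta}{r_0} \in [0,1-\theta]$ as the free variables: once these are fixed, the identity $\frac{\alpha}{\theta}=\frac{1}{q_1}$ determines $q_1 = \theta/\alpha$ (with the convention $q_1=\infty$ if $\alpha=0$), the $q$-interpolation identity forces
\[
  \frac{1}{q_0} \;=\; \frac{A}{1-\theta}, \qquad A := \tfrac{1}{q}-\alpha,
\]
and the $r$-interpolation identity similarly forces $\tfrac{\theta}{r_1} = \tfrac{1}{r} - B$. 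Note $A>0$ because we are in the regime $\alpha < \tfrac{1}{q}$ from Lemma~\ref{lem:weaktypebounds_smallalpha}.

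\textbf{Translation of the individual constraints.} I would next rewrite each of the six listed conditions as an inequality involving only $\theta$ and $B$. The bound $q_0 \ge 1$ becomes $\theta \le 1-A$, the bound $\tfrac{1}{q_0}<\tfrac14$ becomes $\theta < 1-4A$, and the bound $\tfrac{1}{q_0}\le \tfrac{1}{3r_0'}$ rearranges to $B \le (1-\theta)-3A$. On the $r$-side, $r_0\ge 1$ gives $B \le 1-\theta$, while $1\le r_1<\infty$ forces $\tfrac{1}{r}-\theta \le B < \tfrac{1}{r}$. Finally, multiplying $\tfrac{\beta}{\theta}\ge \tfrac{1}{q_1}-\tfrac{1}{2r_1'}$ by $\theta$ and substituting yields $B \ge \tfrac{1}{r}-\theta+2(\alpha-\beta)$. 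The condition $q_0\ne q_1$ is open and can be arranged by perturbation at the end.

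\textbf{Extracting the two inequalities.} The upper bound $B \le (1-\theta)-3A$ combined with the lower bound $B \ge \tfrac{1}{r}-\theta+2(\alpha-\beta)$ is consistent iff
\[
  2(\alpha-\beta) + \tfrac{1}{r} \;\le\; 1-3A \;=\; 1 - \tfrac{3}{q} + 3\alpha,
\]
which simplifies to $\alpha+2\beta \ge \tfrac{3}{q}-\tfrac{1}{r'}$. Similarly, the strict upper bound $B<\tfrac{1}{r}$ combined with the lower bound from $\beta$ demands $\theta > 2(\alpha-\beta)$, while $\tfrac{1}{q_0}<\tfrac14$ demands $\theta < 1-4A = 1-\tfrac{4}{q}+4\alpha$; these two constraints on $\theta$ are jointly satisfiable in $(0,1)$ iff $2(\alpha-\beta)<1-\tfrac{4}{q}+4\alpha$, i.e., iff $\alpha+\beta > \tfrac{2}{q}-\tfrac12$. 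This proves necessity.

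\textbf{Sufficiency and endpoints.} For the converse, under the two inequalities I would pick any $\theta$ in the open interval $\bigl(2(\alpha-\beta),\,1-\tfrac{4}{q}+4\alpha\bigr)\cap(0,1)$, which is nonempty by the first condition, and then any $B$ in the nonempty (by the second condition) interval $\bigl[\max\{0,\tfrac{1}{r}-\theta+2(\alpha-\beta)\},\,\min\{1-\theta,\,(1-\theta)-3A\}\bigr]$, adjusting $B$ slightly if needed so that $B<\tfrac{1}{r}$ (which is automatic once $\theta>2(\alpha-\beta)$) and so that $q_0\ne q_1$. The main obstacle I anticipate is bookkeeping: keeping track of which constraints are strict and which are non-strict, verifying that the bounds $r_0\ge 1$ and $r_1\ge 1$ are absorbed into (and not more restrictive than) the constraints already extracted, and handling the degenerate case $\alpha=0$ separately (where $q_1=\infty$ and the $r_1$-constraint becomes vacuous). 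The strict inequality on $\alpha+\beta$ vs.\ the non-strict one on $\alpha+2\beta$ matches perfectly the strict/non-strict structure of $\tfrac{1}{q_0}<\tfrac{1}{4}$ versus $\tfrac{1}{q_0}\le \tfrac{1}{3r_0'}$, which is a useful sanity check.
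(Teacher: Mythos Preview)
Your proposal is correct and follows essentially the same variable-elimination strategy as the paper: fix $q_1=\theta/\alpha$, then reduce the remaining constraints to inequalities in one or two scalar parameters and check that the resulting intervals are nonempty precisely when $\alpha+\beta>\tfrac{2}{q}-\tfrac12$ and $\alpha+2\beta\ge \tfrac{3}{q}-\tfrac{1}{r'}$. The only cosmetic difference is bookkeeping---you carry $(\theta,B)$ with $B=\tfrac{1-\theta}{r_0}$ simultaneously, while the paper eliminates $r_0$, then $q_0$, then $r_1$ sequentially---but the inequalities extracted and the logic matching strict/non-strict constraints to $\tfrac{1}{q_0}<\tfrac14$ versus $\tfrac{1}{q_0}\le\tfrac{1}{3r_0'}$ are identical.
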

  \begin{proof}
   Setting $q_1:=\frac{\theta}{\alpha}$ the given conditions are  equivalent to
   $$
     \frac{1}{q}-\alpha=\frac{1-\theta}{q_0},\;
     \frac{1}{r'}=\frac{1-\theta}{r_0'}+\frac{\theta}{r_1'},\quad
     \frac{1}{q_0}\leq\frac{1}{3r_0'},\;\frac{1}{q_0}<\frac{1}{4},\; 
     \beta \geq \alpha -\frac{\theta}{2r_1'},\; q_0\neq \frac{\theta}{\alpha}.
   $$
   We may choose $r_0\in [1,\infty]$ according to the second equation  if and only if
   $$
     \frac{1}{q}-\alpha=\frac{1-\theta}{q_0},\;
     1-\theta\geq \frac{1}{r'}-\frac{\theta}{r_1'}\geq \frac{3(1-\theta)}{q_0},\;
     \frac{1}{q_0}<\frac{1}{4},\; 
     \beta \geq \alpha -\frac{\theta}{2r_1'},\; q_0\neq \frac{\theta}{\alpha}.
   $$
   Choosing $q_0\in [1,\infty]$ according to these conditions is possible if and only if 
   $$
     \frac{1}{q}-\alpha<\frac{1-\theta}{4},\quad
     1-\theta\geq \frac{1}{r'}-\frac{\theta}{r_1'}\geq \frac{3}{q}-3\alpha,\quad      
     \beta \geq \alpha -\frac{\theta}{2r_1'},\quad\theta\neq  \alpha q.
   $$
   This is equivalent to
   $$
	\theta< 1-\frac{4}{q}+4\alpha,\quad
     \frac{1}{r'}-\frac{3}{q}+3\alpha\geq \frac{\theta}{r_1'}  
     \geq \max\left\{2\alpha -  2\beta,\theta-\frac{1}{r}\right\},\quad
     \theta\neq  \alpha q.
   $$
   A choice of $r_1\in [1,\infty)$ is possible if and only if
   $$
     1-\frac{4}{q}+4\alpha>
     \theta> \max\left\{2\alpha -  2\beta,\theta-\frac{1}{r}\right\},\quad 
     \frac{1}{r'}-\frac{3}{q}+3\alpha\geq  \max\left\{2\alpha -  2\beta,\theta-\frac{1}{r}\right\},\quad
     \theta\neq \alpha q.
   $$
   This is equivalent to
   $$
     2\alpha-2\beta<\theta<1-\frac{4}{q}+4\alpha,\quad
     \theta\leq 1-\frac{3}{q}+3\alpha,\quad
     \alpha+2\beta\geq \frac{3}{q}-\frac{1}{r'}, \quad
     \theta\neq \alpha q  
   $$    
   and hence, in view of $\alpha<\frac{1}{q}$, equivalent to
   $$
     \alpha+\beta>\frac{2}{q}-\frac{1}{2},\quad
     \alpha+2\beta\geq \frac{3}{q}-\frac{1}{r'}. 
   $$      
 \end{proof}
 
 \begin{prop} \label{prop:InterpolationArithII}
   Assume $1\leq r\leq \infty$ and $0<q< \infty$. Then there  are $r_0,r_1\in [1,\infty],q_0\in
   [1,\infty],q_1\in (0,\infty)$ such that
 \begin{align*}
   \frac{1}{q}= \frac{1-\theta}{q_0}+\frac{\theta}{q_1},\quad
   \frac{1}{r}= \frac{1-\theta}{r_0}+\frac{\theta}{r_1}, \quad 
   \frac{3}{q_0}\leq \frac{1}{r_0'}, \quad \frac{1}{q_0}<\frac{1}{4},\quad q_0\neq q_1 \\
   \gamma = \theta \gamma_1,\quad 
   \gamma_1\geq \max\left\{\frac{1}{q_1},\frac{2}{q_1}-\frac{1}{r_1'},\frac{2}{q_1}-\frac{1}{2}\right\}, \quad 
   0< \theta<1
 \end{align*}
 if and only if 
  $$
     \gamma\geq \max\left\{ \frac{3}{2q}-\frac{1}{2r'},\frac{2}{q}-\frac{1}{r'}\right\},\quad
     \gamma>  \frac{2}{q}-\frac{1}{2}.    
  $$
  \end{prop}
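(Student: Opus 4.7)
The plan is to mirror the elimination strategy of Proposition~\ref{prop:InterpolationArithI}: peel off the auxiliary parameters one at a time until the existence of an admissible configuration reduces to explicit inequalities in $\gamma,q,r$. First, since $\gamma_1$ enters only through a lower bound and must equal $\gamma/\theta$, the three-term maximum decouples into the scalar inequalities
\begin{align*}
\theta/q_1 \leq \gamma, \qquad 2\theta/q_1 - \theta/r_1' \leq \gamma, \qquad 2\theta/q_1 - \theta/2 \leq \gamma.
\end{align*}

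Next I would eliminate $q_1,r_1$ via the interpolation identities, obtaining $\theta/q_1 = 1/q - a$ and $\theta/r_1' = 1/r' - b$ with $a := (1-\theta)/q_0$ and $b := (1-\theta)/r_0'$. The Fefferman-Zygmund constraint $3/q_0 \leq 1/r_0'$ becomes $3a \leq b$, the condition $1/q_0 < 1/4$ becomes $a < (1-\theta)/4$, and admissibility of $q_1, r_1$ yields $0 \leq a < 1/q$ together with $\max\{0, 1/r' - \theta\} \leq b \leq \min\{1/r', 1-\theta\}$; the condition $q_0 \neq q_1$ becomes $a \neq (1-\theta)/q$. The three $\gamma$-inequalities now read
\begin{align*}
\gamma \geq 1/q - a, \qquad \gamma \geq 2/q - 1/r' + (b - 2a), \qquad \gamma \geq 2/q - 2a - \theta/2.
\end{align*}

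For the necessary direction, the middle inequality combined with $b \geq 3a$ sharpens to $\gamma \geq 2/q - 1/r' + a \geq 2/q - 1/r'$, and adding this strengthening to $\gamma \geq 1/q - a$ eliminates $a$ to give $2\gamma \geq 3/q - 1/r'$, i.e. $\gamma \geq 3/(2q) - 1/(2r')$; the third inequality together with $a < (1-\theta)/4$ produces $\gamma > 2/q - (1-\theta)/2 - \theta/2 = 2/q - 1/2$. For the sufficient direction I would saturate $b = 3a$, which minimizes the middle inequality's right-hand side and reduces (b) to $\gamma \geq 2/q - 1/r' + a$. The first two inequalities then confine $a$ to $[\max\{0, 1/q - \gamma\}, \min\{\gamma - 2/q + 1/r', 1/(3r')\}]$; the nonemptiness of this interval follows from the two $\gamma$-hypotheses together with the a priori sign constraint on $\gamma$. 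Finally $\theta \in (0,1)$ is chosen close to $1$ to satisfy (c) via $\gamma > 2/q - 1/2$ while preserving both $a < (1-\theta)/4$ and the lower admissibility bound on $b$.

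I expect the main obstacle to be the case analysis: the dominant term in $\max\{3/(2q)-1/(2r'), 2/q - 1/r'\}$ switches at $r' = q$, and verifying compatibility of the Fefferman-Zygmund ceiling $a \leq 1/(3r')$ with the floor $a \geq 1/q - \gamma$ splits into two regimes, namely $q < 3r'$ (where $\gamma \geq 3/(2q) - 1/(2r')$ is needed) and $q \geq 3r'$ (where the ceiling is automatically compatible). Additional care is required at the degenerate boundaries $a = 0$, $a = (1-\theta)/q$, and $\theta \to 1$, all of which are handled by small perturbations exploiting the strict inequalities in the hypotheses.
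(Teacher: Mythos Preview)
Your elimination strategy matches the paper's, and the necessary direction is correct. The gap is in the sufficient direction, specifically in the choice to saturate $b=3a$ together with the claim that $\theta$ can then be taken close to~$1$.

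The problem is that once $\gamma<\frac{1}{q}$ the constraint $a\geq \frac{1}{q}-\gamma>0$ forces $a$ to be bounded away from zero, so $a<\frac{1-\theta}{4}$ caps $\theta$ away from~$1$. With $\theta$ thus bounded, the admissibility constraint $b\geq \frac{1}{r'}-\theta$ on $r_1$ can fail under your saturation $b=3a$. Concretely, take $q=5$, $r=10$, $\gamma=0.05$: the hypotheses are trivially satisfied, your interval for $a$ is $[0.15,0.3]$, and for every such $a$ the requirements $\theta<1-4a$ and $\theta\geq \frac{1}{r'}-3a=0.9-3a$ are incompatible (e.g.\ $a=0.15$ gives $\theta<0.4$ and $\theta\geq 0.45$). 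Yet a valid configuration does exist here with $b>3a$ (for instance $a=0.15$, $\theta=0.3$, $b=0.6$).

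The paper avoids this by not fixing $b=3a$ but rather minimising $b$ subject to \emph{both} lower bounds, i.e.\ taking $b=\max\{3a,\frac{1}{r'}-\theta\}$; this produces a fourth term $\frac{2\theta}{q_1}-\theta$ in the $\gamma$-maximum which is harmless (dominated by $\frac{2\theta}{q_1}-\frac{\theta}{2}$) and leaves enough room to solve for $\theta$. If you keep the parametrisation in $(a,\theta)$, the fix is simply to drop the saturation and carry the full system of inequalities through one more elimination; the resulting $\theta$-interval is $\max\{0,\frac{4}{q}-2\gamma-\frac{4}{3r'},\frac{12}{q}-6\gamma-\frac{4}{r'}\}\leq \theta<\min\{1,1-\frac{4}{q}+4\gamma\}$, whose nonemptiness is exactly equivalent to the stated hypotheses.
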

  \begin{proof}
    The conditions may be recast as
 \begin{align*}
   \frac{1}{q}= \frac{1-\theta}{q_0}+\frac{\theta}{q_1},\quad
   \frac{1}{r'}= \frac{1-\theta}{r_0'}+\frac{\theta}{r_1'},  \quad
   \frac{3}{q_0}\leq \frac{1}{r_0'}, \quad \frac{1}{q_0}<\frac{1}{4}, \\  
   \gamma\geq
   \max\left\{\frac{\theta}{q_1},\frac{2\theta}{q_1}-\frac{\theta}{r_1'},\frac{2\theta}{q_1}-\frac{\theta}{2}\right\},
   \quad 0<\theta<1.
 \end{align*}
 Substituting $r_1$ we obtain the equivalent conditions
 \begin{align*}
   \frac{1}{q}= \frac{1-\theta}{q_0}+\frac{\theta}{q_1},\quad
   0\leq \frac{1}{r'}-\frac{1-\theta}{r_0'}\leq  \theta,  \quad
   \frac{3}{q_0}\leq \frac{1}{r_0'}, \quad\frac{1}{q_0}<\frac{1}{4}, \\  
   \gamma\geq
   \max\left\{\frac{\theta}{q_1},\frac{2\theta}{q_1}-\frac{1}{r'}+\frac{1-\theta}{r_0'},\frac{2\theta}{q_1}-\frac{\theta}{2}\right\},
    \quad 0< \theta<1
  \end{align*}
  and hence
 \begin{align*}
   \frac{1}{q}= \frac{1-\theta}{q_0}+\frac{\theta}{q_1},\quad
    \max\left\{\frac{3(1-\theta)}{q_0}, \frac{1}{r'}-\theta\right\}\leq \frac{1-\theta}{r_0'}\leq 
    \frac{1}{r'}, \quad \frac{1}{q_0}<\frac{1}{4}, \\  
   \gamma\geq
   \max\left\{\frac{\theta}{q_1},\frac{2\theta}{q_1}-\frac{1}{r'}+\frac{1-\theta}{r_0'},\frac{2\theta}{q_1}-\frac{\theta}{2}\right\},
    \quad 0< \theta<1.
 \end{align*}
 Choosing $r_0$ as small as possible leads to the equivalent set of conditions
 \begin{align*}
   \frac{1}{q}= \frac{1-\theta}{q_0}+\frac{\theta}{q_1},\quad
    \frac{1-\theta}{q_0} \leq  \frac{1}{3r'},  \quad
    \frac{1-\theta}{q_0}<\frac{1-\theta}{4}, \quad 0< \theta<1, \\  
   \gamma\geq
   \max\left\{\frac{\theta}{q_1},
   \frac{2\theta}{q_1}-\theta,
   \frac{2\theta}{q_1}-\frac{1}{r'}+\frac{3(1-\theta)}{q_0},
   \frac{2\theta}{q_1}-\frac{\theta}{2}\right\}.
 \end{align*}
 We now choose $q_1\in (0,\infty]$ according to the first equation and obtain
 \begin{align*}
    \frac{1-\theta}{q_0}\leq \min\left\{\frac{1}{q},\frac{1}{3r'}\right\},  \quad
    \frac{1-\theta}{q_0}<\frac{1-\theta}{4}, \quad 0< \theta<1, \\  
   \gamma\geq
   \max\left\{ \frac{1}{q}-\frac{1-\theta}{q_0},
   \frac{2}{q} -\frac{1}{r'}+\frac{1-\theta}{q_0},
   \frac{2}{q}-\frac{2(1-\theta)}{q_0}-\frac{\theta}{2}\right\},
 \end{align*}
 which is equivalent to
 \begin{align*}
    \max\left\{\frac{1}{q}-\gamma, \frac{1}{q}-\frac{\gamma}{2}-\frac{\theta}{4}\right\}
    \leq \frac{1-\theta}{q_0}
    \leq \min\left\{\frac{1}{q},\frac{1}{3r'},\gamma-\frac{2}{q}+\frac{1}{r'}\right\},  \;\;
    \frac{1-\theta}{q_0}<\frac{1-\theta}{4}, \;\; 0< \theta<1.  
 \end{align*}
 A choice of $q_0$ according to these conditions is possible if and only if  
 \begin{align*}
    \max\left\{\frac{1}{q}-\gamma, \frac{1}{q}-\frac{\gamma}{2}-\frac{\theta}{4},0\right\}
    \leq \min\left\{\frac{1}{q},\frac{1}{3r'},\gamma-\frac{2}{q}+\frac{1}{r'}\right\},  \\
    \max\left\{\frac{1}{q}-\gamma, \frac{1}{q}-\frac{\gamma}{2}-\frac{\theta}{4},0\right\}<\frac{1-\theta}{4}, \quad 0<
    \theta<1,
 \end{align*} 
 which is equivalent to  
 \begin{align*}    
     \max\left\{\frac{4}{q}-2\gamma-\frac{4}{3r'},\frac{12}{q}-6 \gamma -\frac{4}{r'}\right\}
     \leq \theta < 1-\frac{4}{q}+4\gamma ,\quad 0<\theta<1\\
     \gamma\geq \max\left\{\frac{2}{q}-\frac{1}{r'}, \frac{3}{2q}-\frac{1}{2r'}\right\},  \quad 
     \gamma>\frac{2}{q}-\frac{1}{2}.   
 \end{align*}
 This simplifies to
 \begin{align*}
    \gamma\geq \max\left\{\frac{2}{q}-\frac{1}{r'}, \frac{3}{2q}-\frac{1}{2r'}\right\},  \quad
    \gamma>\frac{2}{q}-\frac{1}{2},   
 \end{align*}
 which is all we had to show. 
  \end{proof}

\section*{Acknowledgments}

I wish to thank L. Grafakos and M. Masty{\l}o for helpful clarifications regarding~\cite{GrafakosMastylo}.

\bibliographystyle{abbrv}
\bibliography{biblio}

\end{document}